\documentclass[preprint,12pt]{elsarticle}

\usepackage{amsmath,amssymb,amsthm,mathtools,bm,mathrsfs}
\usepackage{microtype}
\usepackage{enumitem}
\usepackage{booktabs}
\usepackage{array}
\usepackage{hyperref}
\hypersetup{hidelinks}
\usepackage{aliascnt}
\usepackage[nameinlink,capitalise,noabbrev]{cleveref}

\journal{Journal of Functional Analysis}

\numberwithin{equation}{section}

\newtheorem{theorem}{Theorem}[section]

\newaliascnt{proposition}{theorem}
\newtheorem{proposition}[proposition]{Proposition}
\aliascntresetthe{proposition}

\newaliascnt{lemma}{theorem}
\newtheorem{lemma}[lemma]{Lemma}
\aliascntresetthe{lemma}

\newaliascnt{corollary}{theorem}
\newtheorem{corollary}[corollary]{Corollary}
\aliascntresetthe{corollary}

\newaliascnt{claim}{theorem}

\aliascntresetthe{claim}

\theoremstyle{definition}
\newaliascnt{definition}{theorem}

\aliascntresetthe{definition}

\theoremstyle{remark}
\newaliascnt{remark}{theorem}
\newtheorem{remark}[remark]{Remark}
\aliascntresetthe{remark}

\crefname{theorem}{Theorem}{Theorems}
\Crefname{theorem}{Theorem}{Theorems}
\crefname{proposition}{Proposition}{Propositions}
\Crefname{proposition}{Proposition}{Propositions}
\crefname{lemma}{Lemma}{Lemmas}
\Crefname{lemma}{Lemma}{Lemmas}
\crefname{corollary}{Corollary}{Corollaries}
\Crefname{corollary}{Corollary}{Corollaries}
\crefname{remark}{Remark}{Remarks}
\Crefname{remark}{Remark}{Remarks}

\newcommand{\R}{\mathbb R}
\newcommand{\N}{\mathbb N}
\newcommand{\Z}{\mathbb Z}
\newcommand{\C}{\mathbb C}
\newcommand{\F}{\mathcal F}
\newcommand{\Sspace}{\mathcal S}
\newcommand{\Pcal}{\mathbb P}
\newcommand{\cX}{\mathcal X}
\newcommand{\cE}{\mathcal E}

\newcommand{\supp}{\operatorname{supp}}
\newcommand{\diver}{\operatorname{div}}

\newcommand{\sgn}{\operatorname{sgn}}

\newcommand{\dd}{\,\mathrm d}
\newcommand{\trans}[1]{\tau_{#1}}
\newcommand{\lp}{\dot\Delta}
\newcommand{\low}{\dot S}

\newcommand{\norm}[2][]{\left\lVert #2\right\rVert_{#1}}
\newcommand{\abs}[1]{\left\lvert #1\right\rvert}
\newcommand{\brac}[1]{\left(#1\right)}
\newcommand{\ang}[1]{\left\langle #1\right\rangle}

\begin{document}

\begin{frontmatter}

\title{Magnetic norm inflation at the $\ell^1$ Besov endpoint for viscous non-resistive incompressible MHD}

\author{Yingzhe Ban}
\author{Boling Guo}
\author{Maotuo Guo}

\begin{abstract}
We prove magnetic-field norm inflation at the origin for the viscous, non-resistive incompressible magnetohydrodynamic equations on $\R^d$, $d\ge2$, in the endpoint space
\[
   \dot B^{-1}_{\infty,1}(\R^d)\times \dot B^{0}_{\infty,1}(\R^d).
\]
For every sufficiently small $\varepsilon>0$, we construct smooth divergence-free initial data $(u_0,b_0)$, with $b_0$ compactly supported and
\[
  \norm[\dot B^{-1}_{\infty,1}]{u_0}
  +\norm[\dot B^{0}_{\infty,1}]{b_0}<\varepsilon,
\]
such that the corresponding classical solution satisfies
\[
  \norm[\dot B^{0}_{\infty,1}]{b(t_\varepsilon)}>\varepsilon^{-1}
\]
for some $0<t_\varepsilon<\varepsilon$. Consequently, no solution map agreeing with classical solutions can be continuous at the origin in the magnetic endpoint topology. The low--high paraproduct mechanism available when the dyadic summability exponent is greater than one gives no gain at the $\ell^1$ endpoint. We instead combine a resonant four-wave velocity interaction with magnetic stretching to produce uniformly sized contributions on a growing family of dyadic shells. Frequency-localized test functionals extract the endpoint lower bound, while uniform Lagrangian estimates in weighted Fourier $L^1$ spaces and a spatial localization argument control the remainders and remove the auxiliary constant magnetic field.
\end{abstract}

\begin{keyword}
non-resistive MHD \sep norm inflation \sep endpoint Besov space \sep frequency cascade \sep magnetic stretching \sep Lagrangian coordinates
\MSC[2020] 35Q35 \sep 35B30 \sep 35A01 \sep 42B35 \sep 76W05
\end{keyword}

\end{frontmatter}

\section{Introduction and main result}
\label{sec:intro}

\subsection{The equation and the endpoint problem}

We consider the viscous, non-resistive incompressible magnetohydrodynamic system on $\R^d$, $d\ge2$,
\begin{equation}
\label{eq:MHD}
\left\{
\begin{aligned}
  &\partial_tu-\nu\Delta u+(u\cdot\nabla)u+\nabla p=(b\cdot\nabla)b,\\
  &\partial_tb+(u\cdot\nabla)b=(b\cdot\nabla)u,\\
  &\diver u=\diver b=0,\\
  &(u,b)|_{t=0}=(u_0,b_0),
\end{aligned}
\right.
\end{equation}
where $\nu>0$, $u$ is the velocity, $b$ is the magnetic field, and $p$ is the pressure. Only the velocity is diffused. The magnetic field is transported and stretched by the flow, and the classical local theory therefore places the magnetic datum one derivative above the velocity datum; see \cite{CheminMcCormickRobinsonRodrigo2016,FeffermanMcCormickRobinsonRodrigo2014,FeffermanMcCormickRobinsonRodrigo2017,LiTanYin2017,Wan2016}. Continuous dependence and failure of uniform continuity in Sobolev spaces were studied in \cite{LiYinZhu2023}.

At dyadic summability exponent $r=1$, local existence is known for every finite spatial integrability exponent, together with Hadamard continuity in the ranges established in \cite{LuoYeYin2025,Guo2026}. The limiting pair considered here is
\begin{equation}
\label{eq:endpoint-pair}
  \dot B^{-1}_{\infty,1}(\R^d)\times \dot B^{0}_{\infty,1}(\R^d).
\end{equation}
Throughout the paper, ``endpoint'' refers to the $p=\infty$ and dyadic $\ell^1$ endpoint in this one-derivative-gap scale. We ask whether a solution map that agrees with smooth classical solutions can have a magnetic component continuous at the origin from a neighborhood in \eqref{eq:endpoint-pair} into $C([0,T];\dot B^0_{\infty,1})$.

The magnetic component is essential. Setting $b\equiv0$ reduces \eqref{eq:MHD} to Navier--Stokes, so an instability of the full MHD map could be inherited from the velocity equation and need not detect the nondiffusive transport--stretching dynamics. The construction below makes both initial components tend to zero while the magnetic component becomes arbitrarily large in arbitrarily short time.

Frequency-cascade constructions are central in the ill-posedness theory of fluid equations. Bourgain and Pavlovi\'c introduced a high--high-to--low mechanism for Navier--Stokes norm inflation \cite{BourgainPavlovic2008}; related second-iterate and dyadic constructions appear in \cite{CheskidovShvydkoy2010,Germain2008,Wang2015}. For dissipative MHD, magnetic norm inflation was proved in \cite{DaiQingSchonbek2011,CheskidovDai2015}. Those arguments feed a fixed low mode through a direct quadratic interaction and do not address the zero-order $\ell^1$ topology of the nondiffusive magnetic equation.

For non-resistive MHD, Chen, Nie and Ye proved sharp magnetic norm inflation in threshold Sobolev spaces and in corresponding Besov spaces with dyadic summability exponent $r>1$ \cite{ChenNieYe2024}. Their mechanism is a direct low--high paraproduct whose gain uses the gap between $\ell^r$ and absolute summation. That sequence-space gain disappears at $r=1$. Our construction instead concentrates the velocity input near one carrier annulus and distributes comparable magnetic outputs over a growing family of target shells. Thus the endpoint is not obtained by a formal limit from $r>1$. The sensitivity of incompressible flow to the precise borderline topology is also visible in the Navier--Stokes theory around $BMO^{-1}$; see \cite{KochTataru2001,Yoneda2010}.

A separate issue is the base state in a homogeneous space. Although homogeneous Littlewood--Paley blocks annihilate constants, the normalized realization used below excludes nonzero constant fields rather than identifying them with zero. Moreover, instability near a nonzero magnetic equilibrium is dynamically different from discontinuity at the zero state. Background magnetic fields may change the stability problem substantially; see \cite{BardosSulemSulem1988,AbidiZhang2017,CaiLei2018,DengZhang2018,LinXuZhang2015,PanZhouZhu2018,RenWuXiangZhang2014,TanWang2018,XuZhang2015,Zhang2016}. The current-inflation result of Dolce, Knobel and Zillinger \cite{DolceKnobelZillinger2026} concerns a derivative of the magnetic field near a nonzero sheared state, rather than short-time inflation of $b$ itself near $(0,0)$ in \eqref{eq:endpoint-pair}.

\subsection{Main theorem}

A \emph{classical solution} below means a smooth solution of \eqref{eq:MHD} whose spatial slices belong to $H^m(\R^d)$ for every $m\ge0$. For the smooth data constructed here, this agrees with the standard local strong solution.

\begin{theorem}[Endpoint magnetic norm inflation]
\label{thm:main}
There exists $\varepsilon_0>0$, depending only on $d$, $\nu$, and the fixed Littlewood--Paley cutoffs, such that for every $0<\varepsilon<\varepsilon_0$ one can find divergence-free data
\[
   u_0\in\Sspace(\R^d;\R^d),
   \qquad
   b_0\in C_c^\infty(\R^d;\R^d),
\]
and a time $0<t_\varepsilon<\varepsilon$ for which
\begin{equation}
\label{eq:main-small-data}
  \norm[\dot B^{-1}_{\infty,1}]{u_0}
  +\norm[\dot B^{0}_{\infty,1}]{b_0}<\varepsilon.
\end{equation}
The corresponding classical solution exists on $[0,t_\varepsilon]$ and satisfies
\begin{equation}
\label{eq:main-inflation}
  \norm[\dot B^{0}_{\infty,1}]{b(t_\varepsilon)}>\varepsilon^{-1}.
\end{equation}
\end{theorem}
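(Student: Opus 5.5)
We construct the data as a small perturbation of an auxiliary constant magnetic background and then remove the background by spatial localization. The abstract already indicates the ingredients.

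\emph{Step 1: the data.} Fix a large carrier frequency $N$ and a number $K\gg1$ of target shells. The velocity $u_0$ is a sum of four Schwartz plane-wave packets at frequencies $\pm N\xi_1,\pm N\xi_2$ with $|\xi_i|\sim1$. Their amplitudes are $A N$, so that $\norm[\dot B^{-1}_{\infty,1}]{u_0}\lesssim A$ is small. The packets are chosen so that the quadratic (four-wave) interaction through the Leray projector is resonant and produces low-frequency velocity modes $\eta_k$, $k=1,\dots,K$, each placed on its own dyadic shell $2^{j_k}$ with $j_1<\dots<j_K$. The magnetic datum is $b_0=\delta\, e+\beta_0$. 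Here $e$ is a constant vector, used only at the auxiliary stage, and $\beta_0$ is small in $\dot B^0_{\infty,1}$. Stretching $(b\cdot\nabla)u$ by the constant field converts the velocity modes into magnetic outputs $\hat b\approx t\,\delta\,(e\cdot i\eta)\hat u$ on each target shell. Since we measure in the $\ell^1$ norm, $K$ outputs of size $\sim c$ sum to $\sim Kc$. This is the point where the $r>1$ paraproduct gain is unavailable and must be replaced by the number of shells.

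\emph{Step 2: expansion and lower bound.} Write $u=e^{\nu t\Delta}u_0+u_2+R_u$, where $u_2$ is the Duhamel second iterate. Write $b=b_0+t(b_0\cdot\nabla)u_{\rm lin}+b_2+R_b$, where $b_2$ contains the stretching of $b_0$ by $u_2$. For each $k$, define a frequency-localized test functional $\ell_k(f)=\langle\lp_{j_k}f,\varphi_k\rangle$, with $\varphi_k$ a normalized $L^1$ wave packet at the output frequency. Then $\norm[\dot B^0_{\infty,1}]{b}\ge\sum_k|\ell_k(b)|$. Compute $\ell_k(b_2(t))$ explicitly at $t=t_\varepsilon\sim(\nu N^2)^{-1}\log$-type: the resonant interaction keeps size $\sim A^2\delta$ times a time factor that is uniform in $k$. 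The shells are chosen so that this gives $\ell_k(b_2)\ge c_0$ for every $k$. Taking $K\gtrsim \varepsilon^{-1}/c_0$ yields the lower bound $>2\varepsilon^{-1}$. The nonresonant parts of $u_2$ and the linear terms are shown to be invisible to $\ell_k$ by frequency support or to be small.

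\emph{Step 3: remainders.} This is the main obstacle. Transport by $u$ has no smoothing and loses derivatives, and $\dot B^0_{\infty,1}$ is not an algebra-friendly space for $b$. I would therefore pass to Lagrangian coordinates, $b(X(t,a),t)=\nabla_aX\,b_0(a)$. I would estimate $X-\mathrm{id}$ and $R_u$ in a weighted Fourier--$L^1$ (Wiener-type) space $\|\langle\xi\rangle^{s}\hat f\|_{L^1}$, which controls $\dot B^0_{\infty,1}$ via $\|\lp_j f\|_\infty\le\|\widehat{\lp_jf}\|_{L^1}$. The heat semigroup gives $\int_0^t\|\nabla u\|\,ds\lesssim A$ uniformly in $N$, which makes the flow map close to the identity. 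The cubic and higher terms are then bounded by $A^3\delta K$, which is $\ll c_0$, and so cannot destroy the lower bound. Fourier--$L^1$ is closed under products and composition with near-identity maps, which is why I choose it.

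\emph{Step 4: remove the constant field.} Replace $\delta e$ by $\delta\,\chi(x/L)e$, corrected to be divergence free, e.g.\ $\delta\,\mathrm{curl}$ of a cutoff potential. Take $L\to\infty$. This term has small $\dot B^0_{\infty,1}$ norm, being a low-frequency bump of amplitude $\delta$. The velocity packets are localized in a fixed region, so finite propagation in Lagrangian form, combined with the uniform estimates, shows that the solution differs from the constant-background solution by $o(1)$ in the functionals $\ell_k$. Finally, $\beta_0$ and the compact support of $b_0$ follow by taking $b_0$ to be this localized field. Smoothness and the $H^m$ classical solution on $[0,t_\varepsilon]$ follow from the standard local theory, since all norms involved are finite.
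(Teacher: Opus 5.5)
Your architecture matches the paper's: a constant magnetic background, a resonant four-wave interaction, stretching, one test functional per shell, a Lagrangian remainder in Fourier--$L^1$, and a plateau. However, two steps fail as stated.

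\textbf{Step 1 does not produce the $\ell^1$ accumulation.} Four packets of unit Fourier width at $\pm N\xi_1,\pm N\xi_2$ have pairwise frequency sums only near the boundedly many points $0$, $\pm N(\xi_1\pm\xi_2)$, $\pm2N\xi_i$. Their quadratic interaction therefore reaches $O(1)$ dyadic shells, not $K\to\infty$ of them. You need a separate resonant quadruple for each target shell. The paper's $K$ is your carrier $N$; for each $\lambda=2^j$, $j\in J_k$, it places packets at $\pm K\omega\pm\frac{\lambda}{2}e$, whose only interaction near $\lambda e$ is centered exactly at $\lambda e$. All quadruples must share one carrier annulus, since spreading them over different carriers makes $\norm[\dot B^{-1}_{\infty,1}]{u_0}$ of order (number of shells)$\times A$. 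But superposing them at one location makes $\norm[L^\infty]{\lp_k u_0}$ grow with the number of shells as well. The paper resolves this by translating the blocks far apart (\cref{lem:sparse-sum}). That in turn forces a gluing argument: cross-interactions and the plateau tail are controlled by decay of translated products plus a high-regularity a posteriori stability estimate (\cref{prop:gluing,prop:stability,cor:functional-decoupling}). Your appeal to ``finite propagation'' in Step 4 is not available for a parabolic system with nonlocal pressure.

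\textbf{Your time $t_\varepsilon\sim(\nu N^2)^{-1}$ (up to logarithms) is too short, and at the correct time Step 3 does not close as described.} The shell-$\lambda$ magnetic output is the time integral of $\delta(e\cdot\nabla)u_2$. The frequency-$\lambda$ velocity mode is created on $[0,N^{-2}]$ with size $\sim A^2\lambda$ and then decays only at rate $\lambda^2$. Hence $\ell_k(b_2(t))\sim A^2\delta\,\lambda^2\min(t,\lambda^{-2})$, which at your time is suppressed by roughly $\lambda^2N^{-2}\ll1$ for every shell well below the carrier. Uniformity forces $t\gtrsim\lambda^{-2}$ on all target shells. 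The paper takes $T_K=c_*K^{-1/2}$ with $K^{1/4}\le\lambda\le K^{1/2}$, so that $\lambda^2T_K\ge c_*$ (\cref{lem:time-kernel}).

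At such times the low-frequency velocity persists and the displacement is large at the carrier scale ($NY\gg1$). Fourier--$L^1$ is not uniformly stable under composition with near-identity maps: composing a frequency-$N$ function with the flow can cost up to $e^{CN\norm[L^\infty]{Y}}$. The paper instead composes only the frequency-$\lambda$ test kernel with the flow, at cost $e^{C\lambda\norm[L^\infty]{Y}}$ (\cref{cor:test-Taylor}). It then proves the target-scale bound $\lambda\norm[\cX^0]{Y}\lesssim\delta$ on $[0,T_K]$ (\cref{lem:target-displacement}), which is exactly where $\lambda T_K\lesssim1$, i.e.\ $\lambda\le K^{1/2}$, is needed. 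The uniform estimates at critical velocity regularity also require the compensated Piola products of \cref{lem:compensated-product}, since $\cX^0\cdot\cX^{-1}\not\subset\cX^{-1}$.

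Finally, the per-shell error contains a Lorentz back-reaction term cubic in the magnetic amplitude. Your error count $A^3\delta K$ omits it, and beating it requires the magnetic amplitude to be much smaller than the velocity amplitude, as in the paper's choice $\eta_n=\kappa n^{-2}\ll\delta_n=\kappa n^{-1}$.
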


The magnetic data in \cref{thm:main} are compactly supported and converge to zero in $\dot B^0_{\infty,1}$. A constant field is used only in a single-building-block calculation and is replaced, before the final data are formed, by a compactly supported solenoidal plateau. Hence the conclusion is genuinely based at the zero magnetic state.

\paragraph{Relation to the compensated Piola framework.}
The Lagrangian estimate used in \cref{sec:lagrangian} is closely related to the compensated Piola mechanism developed in the companion preprint \cite{Guo2026}. We include a complete self-contained version of the estimate required here. The resonant four-wave construction, the target-shell lower bound, the frequency-localized remainder analysis, and the zero-state localization are specific to the present endpoint norm-inflation argument.

\subsection{Proof architecture}

Let $K=2^k$ be a large carrier frequency and let $\lambda=2^j$ range over
\[
   K^{1/4}\le \lambda\le K^{1/2}.
\]
For each target shell, four real divergence-free packets in the same carrier annulus are arranged so that their quadratic velocity interaction produces a packet near frequency $\lambda e$. The linear magnetic response stays at the carrier scale and is annihilated by the target-shell test functional. Magnetic stretching of the lower-frequency velocity response then creates a term of size $\eta\delta^2$ in the target magnetic shell. Its scale factors satisfy
\[
   \underbrace{K^2}_{\text{two input amplitudes}}
   \underbrace{\lambda^2}_{\text{two output derivatives}}
   \underbrace{K^{-2}\lambda^{-2}}_{\text{two time scales}}
   \simeq1.
\]
Thus each selected shell receives a contribution bounded below independently of $K$ and $\lambda$. Repeating this over $\simeq k$ distinct shells yields the required $\ell^1$ accumulation.

The lower bound is detected by one uniformly bounded linear functional in each target shell. This avoids the stronger and unnecessary task of estimating the sum of the full absolute Littlewood--Paley remainder blocks. In Lagrangian variables, the frozen-in identity exposes the stretching mechanism. A weighted Fourier $L^1$ estimate controls the nonlinear remainder uniformly at the target scale. The key quantity is $\lambda Y$, where $Y$ is the Lagrangian displacement; the unavailable carrier-scale bound $KY$ is never used.

Finally, the building blocks are translated far apart in physical space. This keeps the $L^\infty$-based input norm independent of the number of shells. An a posteriori stability estimate corrects their superposition to an exact solution, and a compactly supported solenoidal plateau replaces the auxiliary constant field.

The paper is organized as follows. \Cref{sec:prelim} introduces the homogeneous spaces, weighted Fourier norms, and frequency-localized test functionals. \Cref{sec:four-wave} proves the scale-independent target-shell response. \Cref{sec:lagrangian} establishes the uniform Lagrangian estimates, the target-scale displacement bound, and the nonlinear single-shell estimate. \Cref{sec:localization} localizes the magnetic background and glues separated building blocks. \Cref{sec:proof-main} selects the parameters and proves \cref{thm:main}. The high-regularity stability estimate is proved in \ref{app:stability}.

\section{Functional setting and frequency-localized test functionals}
\label{sec:prelim}

The letters $C,c>0$ denote constants whose values may change from line to line. Unless a dependence is stated explicitly, they depend only on $d$, $\nu$, and the fixed cutoff functions. Repeated spatial indices are summed. For vector fields $u,v$ and a matrix field $F$, we set
\[
   (u\otimes v)_{i\ell}=u_i v_\ell,
   \qquad
   (\diver F)_i=\partial_\ell F_{i\ell}.
\]
The Leray projector is denoted by $\Pcal$; for $\xi\ne0$ its symbol is
\begin{equation}
\label{eq:Leray}
   \Pcal(\xi)=I-\frac{\xi\otimes\xi}{|\xi|^2}.
\end{equation}

\subsection{Littlewood--Paley decomposition}

Choose radial functions $\chi,\varphi\in C_c^\infty(\R^d)$ such that
\[
  \supp\chi\subset\{|\xi|\le4/3\},
  \qquad
  \supp\varphi\subset\{3/4\le|\xi|\le8/3\},
\]
$\varphi=1$ on $\{4/5\le|\xi|\le5/4\}$, and
\[
  \chi(\xi)+\sum_{j\ge0}\varphi(2^{-j}\xi)=1,
  \qquad
  \sum_{j\in\Z}\varphi(2^{-j}\xi)=1\quad(\xi\ne0).
\]
Set
\[
   \lp_j f=\F^{-1}\!\left[\varphi(2^{-j}\cdot)\widehat f\right],
   \qquad
   \low_j f=\sum_{\ell<j}\lp_\ell f.
\]
Homogeneous spaces are understood in the normalized realization
\begin{equation}
\label{eq:Sh-prime}
   \Sspace'_h
   :=\left\{f\in\Sspace'(\R^d):\low_jf\to0\text{ in }\Sspace'\text{ as }j\to-\infty\right\},
\end{equation}
so distributions are not taken modulo polynomials. For $s\in\R$,
\begin{equation}
\label{eq:hom-Besov}
   \norm[\dot B^s_{\infty,1}]{f}
   :=\sum_{j\in\Z}2^{js}\norm[L^\infty]{\lp_jf}.
\end{equation}
We use Bony's decomposition and the standard support and multiplier properties of the dyadic blocks; see \cite{Bony1981,BahouriCheminDanchin2011}.

For the smooth stability argument we also use the inhomogeneous blocks $\Delta_{-1}=\chi(D)$ and $\Delta_j=\lp_j$ for $j\ge0$, with
\begin{equation}
\label{eq:inh-Besov}
  \norm[B^s_{\infty,1}]{f}
  :=\sum_{j\ge-1}2^{js}\norm[L^\infty]{\Delta_jf}.
\end{equation}
For a time interval $I$,
\begin{equation}
\label{eq:CL}
  \norm[\widetilde L^\rho(I;B^s_{\infty,1})]{f}
  :=\sum_{j\ge-1}2^{js}\norm[L^\rho(I;L^\infty)]{\Delta_jf}.
\end{equation}
Only noncritical indices $s>1$ are used in this auxiliary argument.

\subsection{Weighted Fourier $L^1$ spaces}

We use the convention
\begin{equation}
\label{eq:Fourier-convention}
  \widehat f(\xi)=\int_{\R^d}e^{-ix\cdot\xi}f(x)\dd x,
  \qquad
  f(x)=(2\pi)^{-d}\int_{\R^d}e^{ix\cdot\xi}\widehat f(\xi)\dd\xi.
\end{equation}
For $\sigma\in\R$, define
\begin{equation}
\label{eq:Xsigma}
   \norm[\cX^\sigma]{f}:=\int_{\R^d}|\xi|^\sigma\abs{\widehat f(\xi)}\dd\xi.
\end{equation}
Vector and matrix norms are taken componentwise. The space $\cX^0$ is a Banach algebra,
\begin{equation}
\label{eq:X0-algebra}
   \norm[\cX^0]{fg}\le \norm[\cX^0]{f}\norm[\cX^0]{g},
\end{equation}
and Fourier inversion gives $\norm[L^\infty]{f}\le C\norm[\cX^0]{f}$. We shall also use
\begin{equation}
\label{eq:X-interpolation}
   \norm[\cX^0]{f}^2\le \norm[\cX^{-1}]{f}\norm[\cX^1]{f}.
\end{equation}
For $T>0$, put
\begin{equation}
\label{eq:ET}
   \norm[\cE_T]{v}
   :=\norm[L^\infty(0,T;\cX^{-1})]{v}
     +\nu\norm[L^1(0,T;\cX^1)]{v}.
\end{equation}
The heat semigroup and the Leray projector satisfy
\begin{align}
\label{eq:heat-X}
  \norm[\cE_T]{e^{\nu t\Delta}f}&\le C\norm[\cX^{-1}]{f},\\
\label{eq:bilinear-X}
  \norm[\cE_T]{\int_0^t e^{\nu(t-s)\Delta}\Pcal\diver(F\otimes G)(s)\dd s}
  &\le C_\nu\norm[\cE_T]{F}\norm[\cE_T]{G}.
\end{align}
Indeed, the divergence cancels the $|\xi|^{-1}$ weight, while \eqref{eq:X-interpolation} and Cauchy--Schwarz in time give
\[
  \int_0^T\norm[\cX^0]{F}\norm[\cX^0]{G}\dd t
  \le C_\nu\norm[\cE_T]{F}\norm[\cE_T]{G}.
\]

\subsection{Frequency-localized test functionals}

The endpoint lower bound will be obtained from one scalar test functional in each target shell.

\begin{lemma}[A bounded target-shell functional]
\label{lem:test-functional}
Let $\lambda=2^j\ge16$, $v\in\mathbb S^{d-1}$, and let $r\in\C^d$ be a unit vector. There is a linear functional $\Lambda_{\lambda,v,r}$ represented by a Schwartz kernel $\psi_{\lambda,v,r}$ such that
\begin{align}
\label{eq:test-pairing}
  \Lambda_{\lambda,v,r}(f)
  &=\int_{\R^d}f(x)\cdot\overline{\psi_{\lambda,v,r}(x)}\dd x,\\
\label{eq:test-bounded}
  \abs{\Lambda_{\lambda,v,r}(f)}
  &\le C\norm[L^\infty]{\lp_jf}.
\end{align}
Moreover, $\widehat\psi_{\lambda,v,r}$ may be supported in a fixed-radius ball centered at $\lambda v$, and there exist constants $C_0,C_1>1$, independent of $\lambda,v,r$, such that for every integer $m\ge0$,
\begin{equation}
\label{eq:test-derivatives}
  \norm[L^1]{D^m\psi_{\lambda,v,r}}
  \le C_0(C_1\lambda)^m.
\end{equation}
Here $\norm[L^1]{D^m\psi}$ denotes the sum of the $L^1$ norms of all derivatives of order $m$.
\end{lemma}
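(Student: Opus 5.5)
We construct the kernel explicitly as a modulated bump. Fix a real radial $\theta\in C_c^\infty(\R^d)$ with $\supp\theta\subset\{|\eta|\le1/8\}$ and $\theta\not\equiv0$. Set
\[
  \widehat{\psi_{\lambda,v,r}}(\xi):=\theta(\xi-\lambda v)\,r,
  \qquad\text{i.e.}\qquad
  \psi_{\lambda,v,r}(x)=e^{i\lambda v\cdot x}\,\check\theta(x)\,r,
\]
where $\check\theta=\F^{-1}\theta$ is Schwartz. Then $\psi_{\lambda,v,r}$ is Schwartz, and its Fourier transform is supported in the fixed-radius ball $B(\lambda v,1/8)$. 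We define $\Lambda_{\lambda,v,r}$ by \eqref{eq:test-pairing}. This pairing makes sense for $f\in L^\infty$, and more generally for $f\in\Sspace'$ as the distributional pairing.

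The key step is the reproducing identity $\Lambda_{\lambda,v,r}(f)=\Lambda_{\lambda,v,r}(\lp_jf)$.
\begin{itemize}[leftmargin=*]
\item For $\xi\in\supp\widehat\psi$ we have $|\xi|\in[\lambda-1/8,\lambda+1/8]$, so $|2^{-j}\xi|\in[1-\tfrac1{8\lambda},1+\tfrac1{8\lambda}]\subset[4/5,5/4]$ because $\lambda\ge16$.
\item Since $\varphi=1$ on $\{4/5\le|\xi|\le5/4\}$, we get $\varphi(2^{-j}\cdot)\widehat\psi=\widehat\psi$. As $\varphi$ is real and radial, $\lp_j$ is self-adjoint, and this gives $\lp_j\psi=\psi$.
\item By Parseval, or by duality for $f\in\Sspace'$, $\langle f,\psi\rangle=\langle f,\lp_j\psi\rangle=\langle\lp_jf,\psi\rangle$.
\end{itemize}
Hence
\[
  |\Lambda(f)|\le\norm[L^\infty]{\lp_jf}\,\norm[L^1]{\psi}=\norm[L^\infty]{\lp_jf}\,\norm[L^1]{\check\theta},
\]
using $|r|=1$. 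This is \eqref{eq:test-bounded} with $C=\norm[L^1]{\check\theta}$, independent of $\lambda,v,r$.

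For \eqref{eq:test-derivatives}, apply Leibniz to $e^{i\lambda v\cdot x}\check\theta(x)$:
\[
  \partial^\alpha\psi=\sum_{\beta\le\alpha}\binom{\alpha}{\beta}(i\lambda v)^{\beta}\,\partial^{\alpha-\beta}\check\theta\;r .
\]
This yields $\norm[L^1]{\partial^\alpha\psi}\le\sum_\beta\binom\alpha\beta\lambda^{|\beta|}A_{|\alpha-\beta|}$, where $A_n:=\max_{|\gamma|=n}\norm[L^1]{\partial^\gamma\check\theta}$.

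The main obstacle is to get constants that are uniform in $m$, with geometric growth $C_0C_1^m$. Finite smoothness is not enough for this; we need a Gevrey/analytic-type bound $A_n\le C_0 B^n$. This holds because $\theta$ is compactly supported. Write $\partial^\gamma\check\theta=\F^{-1}[(i\xi)^\gamma\theta]$. Then
\[
  \norm[L^1]{\F^{-1}g}\le C\norm[L^2]{(1+|x|^2)^{N}\F^{-1}g}\le C'\sum_{|\mu|\le 2N}\norm[L^2]{\partial^\mu g},\qquad N>d/4 .
\]
For $g=\xi^\gamma\theta$ supported in $|\xi|\le1/8$, the derivatives up to order $2N$ are bounded by $C\,n^{2N}(1/8)^{n-2N}\lesssim C\,2^{-n}$, with a fixed $N$. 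So $A_n\le C_0$ uniformly in $n$, even with decay.

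Summing over $\beta$ with the binomial theorem then gives
\[
  \norm[L^1]{\partial^\alpha\psi}\le C_0(\lambda+1)^{|\alpha|}\le C_0(2\lambda)^{|\alpha|}.
\]
Summing over the at most $d^m$ multi-indices of order $m$ yields \eqref{eq:test-derivatives} with $C_1=2d$. Enlarging $C_0$ if needed ensures $C_0,C_1>1$.
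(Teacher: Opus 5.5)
Your proof is correct and matches the paper's: a modulated bump $\psi=e^{i\lambda v\cdot x}\check\theta\,r$ with Fourier support in a small ball around $\lambda v$, the reproducing identity from $\varphi=1$ near the unit sphere, and Leibniz plus binomial summation for the derivatives. Your estimate $A_n\le C_0$ from the compact Fourier support of $\theta$ supplies the geometric growth of the $L^1$ seminorms of $\check\theta$, which the paper only asserts; the remaining slips are cosmetic (the Leibniz formula drops the unimodular factor $e^{i\lambda v\cdot x}$, and $Cn^{2N}8^{2N-n}$ should read $C(1+n)^{2N}8^{2N-n}$ to cover $n=0$).
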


\begin{proof}
Choose $\theta\in C_c^\infty(B(0,1))$ and reduce its support so that the multiplier of $\lp_j$ equals one on $\lambda v+\supp\theta$. Define
\[
  \widehat\psi_{\lambda,v,r}(\xi)=c_\theta\theta(\xi-\lambda v)r,
\]
where $c_\theta$ is fixed. Then
\[
   \psi_{\lambda,v,r}(x)
   =c_\theta e^{i\lambda v\cdot x}\check\theta(x)r.
\]
The $L^1$ norm of the kernel is uniform, and \eqref{eq:test-bounded} follows because $\lp_jf=f$ on the Fourier support of the functional. For a multi-index $\alpha$ with $|\alpha|=m$, Leibniz' rule gives
\[
  \partial^\alpha\psi_{\lambda,v,r}
   =c_\theta e^{i\lambda v\cdot x}
     \sum_{\beta\le\alpha}\binom{\alpha}{\beta}
       (i\lambda v)^{\alpha-\beta}\partial^\beta\check\theta(x)r.
\]
The fixed Schwartz seminorms of $\check\theta$, the number of multi-indices, and the binomial coefficients are bounded by $C_0C_1^m$. Since $\lambda\ge16$, this yields \eqref{eq:test-derivatives}.
\end{proof}

\begin{corollary}[Uniform Taylor bound]
\label{cor:test-Taylor}
Let $\psi_{\lambda,v,r}$ be as in \cref{lem:test-functional}. For every bounded vector field $Y$,
\begin{equation}
\label{eq:Taylor-kernel}
  \sum_{m=0}^\infty\frac1{m!}
  \norm[L^1]{D^m\psi_{\lambda,v,r}[Y^{\otimes m}]}
  \le C_0\exp\!\brac{C_1\lambda\norm[L^\infty]{Y}}.
\end{equation}
In particular, the Taylor series of $\psi_{\lambda,v,r}(\cdot+Y)$ is absolutely summable in $L^1$ whenever $\lambda\norm[L^\infty]{Y}$ is uniformly bounded.
\end{corollary}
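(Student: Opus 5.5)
The corollary is a direct consequence of the derivative bound \eqref{eq:test-derivatives} in \cref{lem:test-functional}, applied term by term in the Taylor series. The plan has four steps: a pointwise bound for the contracted derivative tensor, integration in $x$, summation of the exponential series, and the Taylor expansion statement.

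\emph{Pointwise bound.} Fix $m\ge0$. Write $D^m\psi[Y^{\otimes m}](x)$ as the multilinear contraction
\[
\sum_{i_1,\dots,i_m}\partial_{i_1}\cdots\partial_{i_m}\psi(x)\,Y_{i_1}(x)\cdots Y_{i_m}(x).
\]
The $d^m$ ordered index tuples group into multi-indices $\alpha$ with $|\alpha|=m$, each appearing with multinomial multiplicity $m!/\alpha!$. Since $|Y_i(x)|\le\norm[L^\infty]{Y}$ componentwise, we get
\[
\abs{D^m\psi[Y^{\otimes m}](x)}\le \norm[L^\infty]{Y}^m\sum_{|\alpha|=m}\frac{m!}{\alpha!}\abs{\partial^\alpha\psi(x)}\le (d\norm[L^\infty]{Y})^m\sum_{|\alpha|=m}\abs{\partial^\alpha\psi(x)},
\]
because $m!/\alpha!\le d^m$ (the multinomial coefficients sum to $d^m$).

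\emph{Integration and summation.} Integrating in $x$ and invoking \eqref{eq:test-derivatives} gives
\[
\norm[L^1]{D^m\psi[Y^{\otimes m}]}\le C_0\,(dC_1\lambda\norm[L^\infty]{Y})^m.
\]
Dividing by $m!$ and summing over $m$ yields $C_0\exp(dC_1\lambda\norm[L^\infty]{Y})$. The factor $d$ is harmless: we absorb it into $C_1$. Equivalently, we may read $D^m\psi$ as the full tensor with operator norm dominated by the sum of all order-$m$ derivatives up to $d^m$, as in the lemma's convention. Since $C_1$ depends only on $d$ and the cutoffs, this does not affect uniformity in $\lambda,v,r$. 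This gives \eqref{eq:Taylor-kernel}.

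\emph{Taylor expansion.} For the final assertion, $\psi$ is entire in $x$, since its Fourier transform is compactly supported (Paley--Wiener). Hence, for each fixed $x$, the series $\sum_m \frac1{m!}D^m\psi(x)[Y(x)^{\otimes m}]$ converges to $\psi(x+Y(x))$; the pointwise convergence can also be seen from the Taylor remainder formula together with the analogous $L^\infty$ bounds $\norm[L^\infty]{D^m\psi}\le C(C_1\lambda)^m$, which follow identically from the explicit formula for $\psi$. The $L^1$ bound above then gives absolute summability in $L^1$, uniformly when $\lambda\norm[L^\infty]{Y}$ is bounded. By dominated convergence the $L^1$ sum coincides with $\psi(\cdot+Y)$.

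The only point requiring care is the combinatorial constant in the first step, which is handled by enlarging $C_1$.
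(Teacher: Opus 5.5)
Your proof is correct and is the same argument the paper intends: the paper writes no separate proof and treats \eqref{eq:Taylor-kernel} as an immediate term-by-term consequence of \eqref{eq:test-derivatives}, summed as an exponential series. The combinatorial factor $d^m$ can be absorbed into $C_1$ as you do, and it does not arise at all if the sum in \eqref{eq:test-derivatives} is read over ordered index tuples $\partial_{i_1}\cdots\partial_{i_m}$.
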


\section{A resonant four-wave interaction}
\label{sec:four-wave}

We now construct a velocity building block whose second iterate produces a scale-independent magnetic response in a prescribed target shell. Fix the vectors, embedded in the first two coordinates,
\begin{equation}
\label{eq:fixed-vectors}
\begin{aligned}
  \omega&=2^{-1/2}(1,1,0,\dots,0),
  &a_*&=2^{-1/2}(1,-1,0,\dots,0),\\
  e&=5^{-1/2}(1,2,0,\dots,0).
\end{aligned}
\end{equation}
Then $a_*\cdot\omega=0$, $a_*\cdot e\ne0$, and $\Pcal(e)a_*\ne0$. Choose an even nonnegative function $\rho\in C_c^\infty(B(0,r_0))$, $\rho\not\equiv0$, where $r_0>0$ will be fixed sufficiently small.

Let $k\in16\N$ be large, set $K=2^k$, and define
\begin{equation}
\label{eq:Jk}
  J_k:=\{j\in4\N:k/4\le j\le k/2\},
  \qquad
  \lambda=\lambda_j:=2^j.
\end{equation}
Thus $K^{1/4}\le\lambda\le K^{1/2}$ and $|J_k|\simeq k$. Put
\begin{equation}
\label{eq:packet-centers}
  \xi_c=K\omega,
  \qquad
  \beta_\lambda=\frac{\lambda}{2}e,
  \qquad
  \xi_1=\xi_c+\beta_\lambda,
  \qquad
  \xi_2=-\xi_c+\beta_\lambda.
\end{equation}
Define the real divergence-free Schwartz field $W_{K,\lambda}$ by
\begin{equation}
\label{eq:W-packet}
\begin{aligned}
  \widehat W_{K,\lambda}(\xi)=K\big[&i\Pcal(\xi)a_*\rho(\xi-\xi_1)
  +\Pcal(\xi)a_*\rho(\xi-\xi_2)\\
  &-i\Pcal(\xi)a_*\rho(\xi+\xi_1)
  +\Pcal(\xi)a_*\rho(\xi+\xi_2)\big].
\end{aligned}
\end{equation}
Hermitian symmetry makes $W_{K,\lambda}$ real-valued.

\begin{lemma}[Uniform input bounds]
\label{lem:input-bounds}
For all sufficiently large $k$ and every $j\in J_k$,
\begin{equation}
\label{eq:W-support}
  \supp\widehat W_{K,\lambda}\subset\{\xi:K/2\le|\xi|\le2K\},
\end{equation}
and
\begin{equation}
\label{eq:W-critical}
  \norm[\cX^{-1}]{W_{K,\lambda}}
  +\norm[\dot B^{-1}_{\infty,1}]{W_{K,\lambda}}\le C.
\end{equation}
For every pair of integers $m,N\ge0$,
\begin{equation}
\label{eq:W-decay}
  \abs{\nabla^mW_{K,\lambda}(x)}
  \le C_{m,N}K^{m+1}\ang{x}^{-N}.
\end{equation}
All constants are independent of $k$ and $j$.
\end{lemma}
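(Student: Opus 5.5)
The plan is to work directly from the explicit Fourier representation \eqref{eq:W-packet}. Each of the four terms is $K$ times a smooth bump $\rho$ translated to one of $\pm\xi_1,\pm\xi_2$, multiplied by the homogeneous-of-degree-zero matrix symbol $\Pcal(\xi)$ applied to $a_*$.

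\textbf{Step 1: support.} We have $|\xi_c|=K$ and $|\beta_\lambda|=\lambda/2\le K^{1/2}/2$. Hence
\[
\bigl||\xi_{1,2}|-K\bigr|\le K^{1/2},
\]
and every point of $\pm\xi_{1,2}+B(0,r_0)$ satisfies
\[
K-K^{1/2}-r_0\le|\xi|\le K+K^{1/2}+r_0 .
\]
This lies in $[K/2,2K]$ once $k$ is large, which gives \eqref{eq:W-support}. In particular $\Pcal(\xi)$ is smooth on the support, with $|\partial^\alpha\Pcal(\xi)|\le C_\alpha K^{-|\alpha|}\le C_\alpha$ there.

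\textbf{Step 2: the $\cX^{-1}$ bound.} On the support, $|\xi|^{-1}\le 2/K$ and $|\Pcal(\xi)a_*|\le1$. Therefore
\[
\norm[\cX^{-1}]{W_{K,\lambda}}\le \tfrac{2}{K}\cdot K\cdot 4\norm[L^1]{\rho}\le C .
\]

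\textbf{Step 3: the Besov bound.} By \eqref{eq:W-support}, only the dyadic indices $j'$ with $2^{j'}\sim K$ contribute, and there are at most a fixed number of them (say $|j'-k|\le 2$). Each block satisfies
\[
\norm[L^\infty]{\lp_{j'}W}\le C\norm[\cX^0]{\lp_{j'}W}\le C\norm[\cX^0]{W}\le CK\norm[L^1]{\rho}.
\]
Multiplying by $2^{-j'}\sim K^{-1}$ and summing over these boundedly many indices gives $\norm[\dot B^{-1}_{\infty,1}]{W}\le C$.

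\textbf{Step 4: pointwise decay \eqref{eq:W-decay}.} Consider a single term, say
\[
T(x)=K(2\pi)^{-d}\int e^{ix\cdot\xi}\,\Pcal(\xi)a_*\,\rho(\xi-\xi_1)\dd\xi .
\]
The change of variables $\xi=\xi_1+\eta$ gives
\[
T(x)=Ke^{ix\cdot\xi_1}g(x),\qquad g(x)=(2\pi)^{-d}\int e^{ix\cdot\eta}\,\Pcal(\xi_1+\eta)a_*\,\rho(\eta)\dd\eta .
\]
The amplitude $m(\eta)=\Pcal(\xi_1+\eta)a_*\rho(\eta)$ is supported in the fixed ball $B(0,r_0)$. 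By the symbol bound from Step 1, all of its $\eta$-derivatives are bounded uniformly in $K,\lambda$. Integrating by parts $N$ times in $\eta$ therefore gives
\[
|\partial^\beta g(x)|\le C_{\beta,N}\ang{x}^{-N},
\]
uniformly in $k$ and $j$. Derivatives of $T$ fall either on $e^{ix\cdot\xi_1}$, producing factors $|\xi_1|\le 2K$, or on $g$, producing bounded factors. By Leibniz,
\[
|\nabla^mT(x)|\le C_{m,N}K^{m+1}\ang{x}^{-N}.
\]
The same argument applies to the other three terms, which proves \eqref{eq:W-decay}.

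\textbf{Remaining checks.} Reality of $W$ and $\diver W=0$ follow from Hermitian symmetry ($\Pcal(-\xi)=\Pcal(\xi)$ is real) and from $\xi\cdot\Pcal(\xi)=0$; the paper already notes these. The only point needing care, and so the main obstacle, is the uniformity of the symbol derivatives of $\Pcal$ near $\xi_1$. This requires $|\xi_1+\eta|\ge K/2$, which Step 1 guarantees, so I expect no real difficulty.
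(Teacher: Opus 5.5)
Your proposal is correct and follows the same route as the paper's proof. The support claim comes from $\lambda\le K^{1/2}$, the $\cX^{-1}$ bound from fixed-volume Fourier balls carrying amplitude $O(K)$ at $|\xi|\simeq K$, the Besov bound from the boundedly many dyadic blocks near level $k$, and the decay \eqref{eq:W-decay} from integration by parts in the Fourier variable. You carry out that last step in more detail, including the uniform symbol bounds $|\partial^\alpha\Pcal(\xi)|\lesssim K^{-|\alpha|}$ on the support. The only cosmetic difference is that you bound each block directly through $\cX^0$, whereas the paper passes through $\norm[L^\infty]{W_{K,\lambda}}\le CK$; this is the same estimate.
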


\begin{proof}
Since $\lambda\le K^{1/2}$, the four Fourier balls in \eqref{eq:W-packet} lie in the annulus \eqref{eq:W-support} for large $K$. Their volumes are fixed and the Fourier amplitude is $O(K)$, hence $\norm[\cX^{-1}]{W_{K,\lambda}}\le C$. Only finitely many dyadic blocks near level $k$ occur and $\norm[L^\infty]{W_{K,\lambda}}\le C K$, which gives the Besov bound. Repeated integration by parts in Fourier variables yields \eqref{eq:W-decay}.
\end{proof}

Set
\begin{align}
\label{eq:V1}
  V^{(1)}(t)&=e^{\nu t\Delta}W_{K,\lambda},\\
\label{eq:V2}
  V^{(2)}(t)&=-\int_0^t e^{\nu(t-s)\Delta}\Pcal\diver\big(V^{(1)}\otimes V^{(1)}\big)(s)\dd s,\\
\label{eq:H1}
  H^{(1)}(t)&=\int_0^t(e\cdot\nabla)V^{(1)}(s)\dd s,\\
\label{eq:H2}
  H^{(2)}(t)&=\int_0^t\Big[(e\cdot\nabla)V^{(2)}
  +(H^{(1)}\cdot\nabla)V^{(1)}-(V^{(1)}\cdot\nabla)H^{(1)}\Big](s)\dd s.
\end{align}
Finally, let
\begin{equation}
\label{eq:TK}
  T_K=c_*K^{-1/2},
\end{equation}
where $c_*>0$ will be chosen sufficiently large, independently of $K$ and $\lambda$.

\subsection{Interaction geometry and the time kernel}

Let
\[
  \gamma_1=\xi_1,
  \quad \gamma_2=\xi_2,
  \quad \gamma_3=-\xi_1,
  \quad \gamma_4=-\xi_2.
\]

\begin{lemma}[The unique interaction producing the positive target]
\label{lem:interaction-geometry}
Fix $r_0>0$ sufficiently small. For all sufficiently large $K$, the only ordered pairs $(a,b)\in\{1,2,3,4\}^2$ for which
\[
  \big(B(\gamma_a,r_0)+B(\gamma_b,r_0)\big)
  \cap B(\lambda e,4r_0)\ne\varnothing
\]
are $(a,b)=(1,2)$ and $(2,1)$. Their center sum is $\gamma_1+\gamma_2=\lambda e$.
\end{lemma}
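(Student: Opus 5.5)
The plan is a direct enumeration of the sixteen center sums $\gamma_a+\gamma_b$, followed by a distance estimate. The Minkowski sum $B(\gamma_a,r_0)+B(\gamma_b,r_0)$ equals $B(\gamma_a+\gamma_b,2r_0)$. Hence the intersection condition is equivalent to
\[
  \abs{\gamma_a+\gamma_b-\lambda e}<6r_0 .
\]
It therefore suffices to show the following: every ordered pair other than $(1,2)$ and $(2,1)$ has center sum at distance at least $6r_0$ from $\lambda e$, once $K$ is large and $r_0$ is fixed small.

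Using \eqref{eq:packet-centers}, I would tabulate the sums.
\begin{itemize}
\item For $(1,2)$ and $(2,1)$ we get $\xi_c+\beta_\lambda-\xi_c+\beta_\lambda=2\beta_\lambda=\lambda e$; this is the claimed identity.
\item For $(3,4)$ and $(4,3)$ we get $-\lambda e$, at distance $2\lambda\ge 2K^{1/4}$ from $\lambda e$.
\item For $(1,4)$, $(4,1)$, $(2,3)$, $(3,2)$ we get $\pm(\xi_1-\xi_2)=\pm 2K\omega$, at distance at least $2K-\lambda\ge K$ from $\lambda e$, since $\lambda\le K^{1/2}$.
\item The diagonal pairs give $2\gamma_a\in\{\pm 2\xi_1,\pm2\xi_2\}$, each of modulus at least $2K-\lambda$. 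Their distance to $\lambda e$ is therefore at least $2K-2\lambda\ge K$.
\item For $(1,3)$, $(3,1)$, $(2,4)$, $(4,2)$ we get $0$, at distance exactly $\lambda\ge K^{1/4}$ from $\lambda e$.
\end{itemize}
This covers all sixteen ordered pairs.

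In every excluded case the distance is at least $\min(\lambda,K)\ge K^{1/4}$. This exceeds $6r_0$ once $K^{1/4}>6r_0$, so "sufficiently large $K$" depends only on $r_0$. The bound is uniform in $j\in J_k$, because I only use $K^{1/4}\le\lambda\le K^{1/2}$.

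The only step needing care is the bookkeeping: making sure the zero-sum and $-\lambda e$ cases are separated using the lower bound $\lambda\ge K^{1/4}$, not merely $\lambda\ge16$. There is no real obstacle beyond this. The smallness of $r_0$ plays no role in the geometry itself; it is only needed elsewhere, e.g.\ so that the four balls lie in the carrier annulus in \cref{lem:input-bounds} and the target ball sits in the region where the multiplier of $\lp_j$ equals one.
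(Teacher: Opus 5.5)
Your proposal is correct and follows essentially the same route as the paper: you tabulate the sixteen center sums $\gamma_a+\gamma_b$, use that the Minkowski sum of two packet balls is a ball of radius $2r_0$, and separate every sum other than $\lambda e$ from $B(\lambda e,4r_0)$ by a distance of order $\lambda$ or $K$, using $K^{1/4}\le\lambda\le K^{1/2}$. Your explicit distances ($\lambda$ for the zero sums, $2\lambda$ for $-\lambda e$, at least $2K-2\lambda$ for the diagonal and $\pm2\xi_c$ sums) make quantitative what the paper states as ``comparable to either $\lambda$ or $K$.''
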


\begin{proof}
The relevant center sums are
\[
\begin{array}{c|c}
\text{ordered pair type}&\text{center sum}\\ \hline
(1,2),(2,1)&\lambda e\\
(3,4),(4,3)&-\lambda e\\
(1,3),(3,1),(2,4),(4,2)&0\\
(1,4),(4,1)&2\xi_c\\
(2,3),(3,2)&-2\xi_c.
\end{array}
\]
The diagonal sums $2\gamma_a$ have magnitude comparable to $K$. Because $\lambda\le K^{1/2}$, all sums except $\lambda e$ remain a distance comparable to either $\lambda$ or $K$ from $B(\lambda e,4r_0)$ once $K$ is large. The radius of a Minkowski sum of two packet supports is at most $2r_0$, which proves the claim.
\end{proof}

For $T>0$, $\xi\in\R^d$, and $\alpha>0$, define
\begin{equation}
\label{eq:Theta-def}
  \Theta_T(\xi,\alpha)
  :=\int_0^T\int_0^t e^{-\nu(t-s)|\xi|^2}e^{-\nu s\alpha}\dd s\dd t.
\end{equation}
Direct integration gives
\begin{equation}
\label{eq:Theta-formula}
  \Theta_T(\xi,\alpha)
  =\frac1{\nu^2(\alpha-|\xi|^2)}
  \left(\frac{1-e^{-\nu T|\xi|^2}}{|\xi|^2}
  -\frac{1-e^{-\nu T\alpha}}{\alpha}\right).
\end{equation}

\begin{lemma}[Uniform two-scale time-kernel bound]
\label{lem:time-kernel}
There are $c,C>0$ such that the following holds. Choose $c_*$ sufficiently large. Then, for all sufficiently large $K$, whenever
\[
  K^{1/4}\le\lambda\le K^{1/2},
  \qquad |\xi-\lambda e|\le4r_0,
  \qquad \alpha\simeq K^2,
\]
one has
\begin{equation}
\label{eq:Theta-bound}
  \frac{c}{K^2\lambda^2}
  \le \Theta_{T_K}(\xi,\alpha)
  \le \frac{C}{K^2\lambda^2}.
\end{equation}
\end{lemma}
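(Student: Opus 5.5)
We work directly from the explicit formula \eqref{eq:Theta-formula}. Write $\mu=|\xi|^2$, and set
\[
  A=\frac{1-e^{-\nu T_K\mu}}{\mu},\qquad B=\frac{1-e^{-\nu T_K\alpha}}{\alpha},
\]
so that $\Theta_{T_K}=(A-B)/(\nu^2(\alpha-\mu))$. Note also that $\Theta_T(\xi,\alpha)$ is manifestly positive from the definition \eqref{eq:Theta-def}, so only the sizes need to be tracked.

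The first step records the scales. Since $|\xi-\lambda e|\le 4r_0$ and $\lambda\ge K^{1/4}\to\infty$, we have $|\xi|\simeq\lambda$, hence $\mu\simeq\lambda^2\le CK$. We also have $\alpha\simeq K^2$, so $\alpha-\mu\simeq K^2$ for large $K$.

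The second step treats the two exponentials. For $B$, the exponent satisfies $\nu T_K\alpha\simeq c_*\nu K^{3/2}\to\infty$, so $B=\alpha^{-1}(1-o(1))\simeq K^{-2}$. For $A$, the exponent $\nu T_K\mu\simeq c_*\nu\lambda^2K^{-1/2}$ ranges over $[c\,c_*\nu,\,Cc_*\nu K^{1/2}]$ as $\lambda$ ranges over $[K^{1/4},K^{1/2}]$. This is where the choice of $c_*$ enters. Taking $c_*$ large makes $\nu T_K\mu\ge c\,c_*\nu\ge1$ uniformly, so $1-e^{-\nu T_K\mu}\in[1-e^{-1},1]$ and therefore $A\simeq\mu^{-1}\simeq\lambda^{-2}$. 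Note that $T_K=c_*K^{-1/2}$ is exactly the heat time of the lowest target shell $\lambda=K^{1/4}$, which is why this lower bound holds uniformly in $\lambda$.

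The third step compares $A$ and $B$. Since $\lambda^2\le K$, we get $B\le CK^{-2}\le CK^{-1}\lambda^{-2}\ll A$, so $A-B\simeq\lambda^{-2}$. Dividing by $\nu^2(\alpha-\mu)\simeq K^2$ then yields \eqref{eq:Theta-bound}, with constants depending on $\nu$, the implicit constants in $\alpha\simeq K^2$, and $r_0$.

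The only delicate point is uniformity of the lower bound on $1-e^{-\nu T_K|\xi|^2}$ at the bottom shell $\lambda=K^{1/4}$. This is what forces $c_*$ to be large (independently of $K,\lambda$), and it is handled by the choice made in the second step. Everything else is elementary.
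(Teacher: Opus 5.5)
Your proposal is correct and follows essentially the same route as the paper's proof. Both arguments work from the explicit formula \eqref{eq:Theta-formula}, use $|\xi|\simeq\lambda$ and $\alpha-|\xi|^2\simeq K^2$, choose $c_*$ large so that $1-e^{-\nu T_K|\xi|^2}$ is bounded below uniformly down to the bottom shell $\lambda=K^{1/4}$, and use $\alpha^{-1}\ll\lambda^{-2}$ to conclude that the bracket is comparable to $\lambda^{-2}$.
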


\begin{proof}
On the stated set, $|\xi|\simeq\lambda$, $\alpha-|\xi|^2\simeq K^2$, and
\[
  T_K|\xi|^2\gtrsim c_*K^{-1/2}\lambda^2\ge c_*.
\]
Choose $c_*$ so large that $1-e^{-\nu T_K|\xi|^2}\ge c_\nu>0$. Also $T_K\alpha\simeq K^{3/2}$, so the second exponential in \eqref{eq:Theta-formula} is negligible. Since $\alpha^{-1}\ll\lambda^{-2}$, the expression in parentheses in \eqref{eq:Theta-formula} is comparable to $\lambda^{-2}$. This proves \eqref{eq:Theta-bound}.
\end{proof}

\subsection{Scale-independent target-shell output}

\begin{proposition}[Uniform cubic response in one target shell]
\label{prop:cubic-output}
There exist constants $c_0,C_0>0$ such that, for every sufficiently large $k$ and every $j\in J_k$, there is a functional $\Lambda_{K,\lambda}$ of the form in \cref{lem:test-functional} satisfying
\begin{align}
\label{eq:H1-annihilate}
  \Lambda_{K,\lambda}\big(H^{(1)}(T_K)\big)&=0,\\
\label{eq:H2-lower}
  \abs{\Lambda_{K,\lambda}\big(H^{(2)}(T_K)\big)}&\ge c_0,
\end{align}
and
\begin{equation}
\label{eq:Lambda-shell-bound}
  \abs{\Lambda_{K,\lambda}(f)}
  \le C_0\norm[L^\infty]{\lp_jf}.
\end{equation}
\end{proposition}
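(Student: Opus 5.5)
We take $\Lambda_{K,\lambda}:=\Lambda_{\lambda,e,r}$ from \cref{lem:test-functional}, with target direction $v=e$. The kernel $\widehat\psi_{\lambda,e,r}=c_\theta\theta(\cdot-\lambda e)r$ is supported in $B(\lambda e,r_0)$, and the unit vector $r\in\C^d$ will be fixed at the end. Then \eqref{eq:Lambda-shell-bound} is exactly \eqref{eq:test-bounded}. By Parseval,
\[
\Lambda_{K,\lambda}(f)=(2\pi)^{-d}\int \widehat f(\xi)\cdot\overline{\widehat\psi(\xi)}\dd\xi,
\]
so only the Fourier restriction of $f$ to $B(\lambda e,r_0)$ matters.

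\emph{Step 1: annihilation.} For \eqref{eq:H1-annihilate}, note that $H^{(1)}$ is linear in $V^{(1)}$. Its Fourier transform is a multiple of $\widehat W_{K,\lambda}$, namely
\[
\widehat H^{(1)}(t,\xi)=i(e\cdot\xi)\,\frac{1-e^{-\nu t|\xi|^2}}{\nu|\xi|^2}\,\widehat W_{K,\lambda}(\xi),
\]
and by \eqref{eq:W-support} it is supported in $\{K/2\le|\xi|\le2K\}$. This annulus is disjoint from $B(\lambda e,r_0)$ because $\lambda\le K^{1/2}$, so the pairing vanishes.

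\emph{Step 2: restriction to the resonant pair.} Every term in $H^{(2)}$ is bilinear in $W_{K,\lambda}$, with Fourier support in the Minkowski sums $B(\gamma_a,r_0)+B(\gamma_b,r_0)$. By \cref{lem:interaction-geometry}, only the ordered pairs $(1,2)$ and $(2,1)$ meet $B(\lambda e,r_0)$. Hence $\Lambda(H^{(2)}(T_K))$ is an explicit double integral over $\eta\in B(\xi_1,r_0)$ and $\xi-\eta\in B(\xi_2,r_0)$ (and the symmetric pair). The integrand is $K^2\rho(\eta-\xi_1)\rho(\xi-\eta-\xi_2)$ times a symbol and a time factor.

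For the term $(e\cdot\nabla)V^{(2)}$, the time factor is $\Theta_{T_K}(\xi,\alpha)$ with $\alpha=|\eta|^2+|\xi-\eta|^2\simeq2K^2$. The symbol is
\[
-(ie\cdot\xi)\,\Pcal(\xi)\,\bigl[(i\xi\cdot\widehat w_2)\,\widehat w_1+(i\xi\cdot\widehat w_1)\,\widehat w_2\bigr],
\]
where $\widehat w_1=i\Pcal(\eta)a_*$ and $\widehat w_2=\Pcal(\xi-\eta)a_*$ are the packet polarizations. By \cref{lem:time-kernel}, the size of this term is $K^2\cdot\lambda^2\cdot(K^2\lambda^2)^{-1}\simeq1$.

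For the stretching terms $(H^{(1)}\cdot\nabla)V^{(1)}-(V^{(1)}\cdot\nabla)H^{(1)}$, the time factors are explicit integrals of $(1-e^{-\nu s|\eta|^2})e^{-\nu s|\xi-\eta|^2}/(\nu|\eta|^2)$, which are of order $K^{-4}$. The symbols involve $\widehat w\cdot(\text{carrier frequency})$. At the centers these vanish to leading order, because $\Pcal(\pm\xi_c)a_*=a_*$ and $a_*\cdot\omega=0$. After expanding $\Pcal(\eta)a_*\cdot(\xi-\eta)$ to first order in $\beta_\lambda$ and $\eta-\xi_1$, each surviving contribution carries a factor $O(\lambda)$ or $O(r_0)$ against $K^2$ amplitudes, and should therefore be bounded as well.

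\emph{Step 3: evaluate at the centers.} We expand all symbols and time factors around $\eta=\xi_1$, $\xi=\lambda e$. The errors are $O(r_0/\lambda+\lambda/K+K^{-1/2})$, uniformly for $j\in J_k$. Using the explicit formula \eqref{eq:Theta-formula} (with $1-e^{-\nu T_K|\xi|^2}\approx1$ and $e^{-\nu T_K\alpha}\approx0$), the leading term becomes a fixed vector $\mathbf m\in\C^d$, independent of $K$ and $\lambda$. It is multiplied by the positive number $\int\!\!\int\rho(\eta-\xi_1)\rho(\xi-\eta-\xi_2)\theta(\xi-\lambda e)\dd\eta\dd\xi$, which is independent of $K$ and $\lambda$ and positive if $\theta\ge0$ is chosen with $\theta>0$ near $0$, since $\rho\ge0$ is even and nonzero. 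The leading part of the $V^{(2)}$-term is proportional to $\Pcal(e)a_*\,(e\cdot a_*)$, which is nonzero by the choice \eqref{eq:fixed-vectors}. The phases $i$ and $1$ on the two packets are what prevent the $(1,2)$ and $(2,1)$ contributions from cancelling.

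Once $\mathbf m\ne0$ is established, we set $r=\overline{\mathbf m}/|\mathbf m|$ (or whichever conjugation makes $\widehat H^{(2)}\cdot\overline{\widehat\psi}$ equal to $|\mathbf m|$). Then \eqref{eq:H2-lower} follows, with $c_0$ equal to half the leading constant, once $r_0$ is fixed small and $K$ is large.

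\emph{Main obstacle.} The hardest step is showing $\mathbf m\ne0$: the stretching terms are of the same order as the $V^{(2)}$-term and could a priori cancel it. I would first compute both contributions explicitly at leading order. The $V^{(2)}$-term is proportional to $\Pcal(e)a_*(e\cdot a_*)$, while the stretching terms are, to leading order, proportional to combinations of $a_*$ and $e$ with coefficients computed from $e\cdot\omega$. I would then check that the total is a nonzero vector. If there is a degenerate coincidence, the fallback is to exploit the different time dependence of the terms (the $\Theta$-kernel versus the $K^{-4}$ time factors), or a small adjustment of $e$ within the first two coordinates, to break the cancellation.
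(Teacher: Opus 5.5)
You handle Step~1 and the $(e\cdot\nabla)V^{(2)}$ contribution the same way the paper does: the restriction to the ordered pairs $(1,2),(2,1)$ via \cref{lem:interaction-geometry}, the kernel $\Theta_{T_K}$, and a leading vector proportional to $i(a_*\cdot e)\Pcal(e)a_*$. The gap is in the Lie-bracket part $(H^{(1)}\cdot\nabla)V^{(1)}-(V^{(1)}\cdot\nabla)H^{(1)}$ of $H^{(2)}$. You only claim this part is ``bounded'', then assert it is of the same order as the $V^{(2)}$-term and might cancel it. You then leave $\mathbf m\ne0$ to an explicit computation you never carry out, with a fallback that would change the construction (moving $e$). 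So \eqref{eq:H2-lower} is not proved. The premise is also false: the bracket contributes only $O(\lambda/K)\le O(K^{-1/2})$ on the target ball, so there is nothing to cancel.

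The missing estimate is short. Since $V^{(1)}$ lives in the carrier annulus, $\norm[\cX^0]{V^{(1)}(t)}\le CKe^{-c\nu K^2t}$ and $\norm[\cX^0]{H^{(1)}(t)}\le C\int_0^tK^2e^{-c\nu K^2s}\dd s\le C$. Both fields are divergence free, so the bracket equals $\diver(V^{(1)}\otimes H^{(1)}-H^{(1)}\otimes V^{(1)})$. Let $\Pi_\lambda$ be a smooth Fourier cutoff to the target ball. After applying it, the derivative costs only the output frequency $\lambda$, and
\[
\norm[\cX^0]{\Pi_\lambda\int_0^{T_K}\big[(H^{(1)}\cdot\nabla)V^{(1)}-(V^{(1)}\cdot\nabla)H^{(1)}\big]\dd t}
\le C\lambda\int_0^\infty\norm[\cX^0]{H^{(1)}}\norm[\cX^0]{V^{(1)}}\dd t\le C\frac{\lambda}{K}.
\]
Your own bookkeeping gives the same result if you finish the count:
\begin{itemize}
\item $K^2$ from the two packet amplitudes;
\item $K$ from the factor $e\cdot(\xi-\eta)$ in $\widehat H^{(1)}$;
\item $O(\lambda)$ from the compensated dot product, which is exact because $\widehat W(\zeta)\cdot\zeta=0$ turns $\widehat W(\xi-\eta)\cdot\eta$ into $\widehat W(\xi-\eta)\cdot\xi$;
\item $K^{-4}$ from the time factor you wrote down.
\end{itemize}
The product is $\lambda/K$, not $1$. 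Structurally, in the bracket both time integrations run on the fast carrier scale $K^{-2}$, because the integrand always carries a factor of $V^{(1)}$. In the $V^{(2)}$-term, by contrast, the outer integration runs on the slow output scale $\lambda^{-2}$, and that is what pays for the two output derivatives. With this estimate, $\mathbf m$ is just the $V^{(2)}$ coefficient, which is nonzero by \eqref{eq:fixed-vectors}, and no fallback is needed.

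Two minor points. With $\Lambda(f)=\int f\cdot\overline{\psi}$ you want $r=\mathbf m/|\mathbf m|$. The approximation $1-e^{-\nu T_K|\xi|^2}\approx1$ requires $c_*$ large, as in \cref{lem:time-kernel}.
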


\begin{proof}
Write $a_\perp(\xi)=\Pcal(\xi)a_*$ and isolate
\[
   \mathcal T_{K,\lambda}
   :=\int_0^{T_K}(e\cdot\nabla)V^{(2)}(t)\dd t.
\]
By \cref{lem:interaction-geometry}, only the ordered pairs $(\xi_1,\xi_2)$ and $(\xi_2,\xi_1)$ contribute to a fixed-radius ball around $\lambda e$. For $\xi$ in that ball, Fourier transformation of \eqref{eq:V2} gives
\begin{equation}
\label{eq:T-hat}
 \widehat{\mathcal T}_{K,\lambda}(\xi)
 =(2\pi)^{-d}iK^2(e\cdot\xi)\Pcal(\xi)
 \int_{\R^d}\Theta_{T_K}(\xi,\alpha_{\xi,\mu})\mathcal S_\xi(\mu)\dd\mu,
\end{equation}
where
\begin{equation}
\label{eq:alpha}
  \alpha_{\xi,\mu}=|\mu|^2+|\xi-\mu|^2
\end{equation}
and
\begin{align}
\label{eq:S-symbol}
  \mathcal S_\xi(\mu)
  ={}&(\xi\cdot a_\perp(\mu))a_\perp(\xi-\mu)
       \rho(\mu-\xi_1)\rho(\xi-\mu-\xi_2)\\
  &+(\xi\cdot a_\perp(\mu))a_\perp(\xi-\mu)
       \rho(\mu-\xi_2)\rho(\xi-\mu-\xi_1).\nonumber
\end{align}
The sign in \eqref{eq:T-hat} can be read directly from the phases. For either selected ordered pair, the product of packet phases is $i$. The minus sign in \eqref{eq:V2} and the factor $i$ from $\diver$ turn this into a positive real coefficient, while the final derivative $e\cdot\nabla$ contributes the displayed factor $i$. Thus the two ordered pairs have the same phase and cannot cancel.

On the support in \eqref{eq:T-hat}, $\alpha_{\xi,\mu}\simeq K^2$. Uniformly in $j\in J_k$,
\begin{align}
\label{eq:symbol-expansion-1}
  a_\perp(\mu)&=a_*+O(\lambda/K),
  &a_\perp(\xi-\mu)&=a_*+O(\lambda/K),\\
\label{eq:symbol-expansion-2}
  e\cdot\xi&=\lambda+O(r_0),
  &\xi\cdot a_\perp(\mu)&=\lambda(a_*\cdot e)+O(\lambda^2/K+r_0),\\
\label{eq:symbol-expansion-3}
  \Pcal(\xi)a_*&=\Pcal(e)a_*+O(r_0/\lambda).
\end{align}
The first line follows from $a_*\cdot\omega=0$ and $|\mu\mp\xi_c|=O(\lambda)$. By \cref{lem:time-kernel}, the leading vector in the integrand is
\begin{equation}
\label{eq:leading-vector}
  iK^2\lambda^2(a_*\cdot e)\Pcal(e)a_*
  \Theta_{T_K}(\xi,\alpha_{\xi,\mu}),
\end{equation}
with relative error bounded by $C(\lambda/K+r_0/\lambda)$. The two bump products in \eqref{eq:S-symbol} are nonnegative.

Choose $0<r_1<r_0$ so that $\rho*\rho$ is strictly positive on $B(0,r_1)$, and take a nonnegative $\vartheta\in C_c^\infty(B(0,r_1))$ with nonzero integral. Put
\begin{equation}
\label{eq:rout}
   r_{\mathrm{out}}
   =i\,\sgn(a_*\cdot e)\frac{\Pcal(e)a_*}{|\Pcal(e)a_*|}
\end{equation}
and define
\begin{equation}
\label{eq:Lambda-def}
  \Lambda_{K,\lambda}(f)
  :=\Re\int_{\R^d}\vartheta(\xi-\lambda e)
       \widehat f(\xi)\cdot\overline{r_{\mathrm{out}}}\dd\xi.
\end{equation}
This is a functional of the form in \cref{lem:test-functional}. By \eqref{eq:Theta-bound}--\eqref{eq:symbol-expansion-3}, after first fixing $r_0$ and then taking $K$ large,
\begin{equation}
\label{eq:T-lower}
   \Lambda_{K,\lambda}(\mathcal T_{K,\lambda})\ge2c_0
\end{equation}
with $c_0$ independent of $K$ and $\lambda$.

It remains to estimate the Lie-bracket term in \eqref{eq:H2}. Let $\Pi_\lambda$ be a smooth Fourier cutoff to the target ball. Since $V^{(1)}$ stays in the carrier annulus,
\begin{equation}
\label{eq:V1-X0}
   \norm[\cX^0]{V^{(1)}(t)}\le CKe^{-c\nu K^2t},
\end{equation}
and
\begin{equation}
\label{eq:H1-X0}
   \norm[\cX^0]{H^{(1)}(t)}
   \le C\int_0^tK^2e^{-c\nu K^2s}\dd s\le C.
\end{equation}
Because the fields are divergence free,
\[
  (H^{(1)}\cdot\nabla)V^{(1)}-(V^{(1)}\cdot\nabla)H^{(1)}
  =\diver\big(H^{(1)}\otimes V^{(1)}-V^{(1)}\otimes H^{(1)}\big).
\]
After localization, the derivative is measured at the output frequency $\lambda$, and therefore
\begin{align}
\label{eq:bracket-small}
 &\norm[\cX^0]{\Pi_\lambda\int_0^{T_K}
  \big[(H^{(1)}\cdot\nabla)V^{(1)}-(V^{(1)}\cdot\nabla)H^{(1)}\big]\dd t}\\
 &\qquad\le C\lambda\int_0^\infty
   \norm[\cX^0]{H^{(1)}(t)}\norm[\cX^0]{V^{(1)}(t)}\dd t
   \le C\frac{\lambda}{K}.\nonumber
\end{align}
Since $\lambda/K\le K^{-1/2}$, this is smaller than $c_0$ for large $K$, proving \eqref{eq:H2-lower}. Finally, $H^{(1)}$ is supported in the carrier annulus, while the functional is supported near $\lambda e$ with $\lambda\le K^{1/2}$; hence \eqref{eq:H1-annihilate}. The bound \eqref{eq:Lambda-shell-bound} follows from \cref{lem:test-functional}.
\end{proof}

\begin{remark}[Scale balance]
The principal output has schematic size
\[
  K^2\lambda^2(K^{-2}\lambda^{-2})\simeq1.
\]
This scale independence is the source of the later $\ell^1$ accumulation over $|J_k|\simeq k$ target shells.
\end{remark}

\section{Uniform Lagrangian control at the target scale}
\label{sec:lagrangian}

Fix $k$ and $j\in J_k$, abbreviate $W=W_{K,\lambda}$, and use the auxiliary decomposition
\[
  b=\eta e+h.
\]
Here $\delta$ is the amplitude of the velocity packet and $\eta$ is the amplitude of the constant magnetic field. The perturbation $(u,h)$ solves
\begin{equation}
\label{eq:auxiliary-system}
\left\{
\begin{aligned}
  &\partial_tu-\nu\Delta u+(u\cdot\nabla)u+\nabla p
     =\eta(e\cdot\nabla)h+(h\cdot\nabla)h,\\
  &\partial_th+(u\cdot\nabla)h
     =\eta(e\cdot\nabla)u+(h\cdot\nabla)u,\\
  &\diver u=\diver h=0,\\
  &(u,h)|_{t=0}=(\delta W,0).
\end{aligned}
\right.
\end{equation}
The constant field will be replaced by a compactly supported solenoidal plateau in \cref{sec:localization}.

\subsection{Lagrangian formulation}

Let $\Phi(t,a)=a+Y(t,a)$ be the flow of $u$,
\begin{equation}
\label{eq:flow}
  \partial_t\Phi(t,a)=u(t,\Phi(t,a)),
  \qquad
  \Phi(0,a)=a.
\end{equation}
Set
\begin{equation}
\label{eq:Lagrangian-variables}
   F=I+D_aY,
   \qquad A=F^{-1},
   \qquad v=u\circ\Phi,
   \qquad Q=p\circ\Phi.
\end{equation}
Since the flow is volume preserving, the frozen-in formula for the full magnetic field is
\begin{equation}
\label{eq:frozen-in}
  b(t,\Phi(t,a))=F(t,a)b_0(a)=\eta F(t,a)e.
\end{equation}
Consequently,
\begin{equation}
\label{eq:Lagrangian-system}
\left\{
\begin{aligned}
  &\partial_tv-\nu\diver_a(AA^T\nabla_av)+A^T\nabla_aQ
     =\zeta(e\cdot\nabla_a)^2Y,\\
  &\diver_a(Av)=0,\\
  &\partial_tY=v,\\
  &(Y,v)|_{t=0}=(0,\delta W),
\end{aligned}
\right.
\qquad \zeta:=\eta^2.
\end{equation}
We used the Piola identity $\partial_{a_j}A_{ji}=0$ and $(Fe)\cdot\nabla_x=e\cdot\nabla_a$.

\subsection{Compensated Stokes estimates}

At negative Fourier regularity, the generic product estimate $\cX^0\cdot\cX^{-1}\subset\cX^{-1}$ fails. The divergence-free Piola structure supplies the missing output derivative.

\begin{lemma}[Compensated products]
\label{lem:compensated-product}
If $\diver f=0$, then
\begin{align}
\label{eq:comp-prod-1}
  \norm[\cX^{-1}]{(f\cdot\nabla)Z}
  &\le C\norm[\cX^{-1}]{f}\norm[\cX^1]{Z},\\
\label{eq:comp-prod-2}
  \norm[\cX^{-1}]{(f\cdot\nabla)w}
  &\le C\norm[\cX^0]{f}\norm[\cX^0]{w}.
\end{align}
\end{lemma}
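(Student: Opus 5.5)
The plan is to use the divergence-free condition to write both products in divergence form, $(f\cdot\nabla)Z=\diver(Z\otimes f)$, i.e. $((f\cdot\nabla)Z)_i=\partial_\ell(Z_if_\ell)$. On the Fourier side this gives
\[
  \widehat{(f\cdot\nabla)Z}_i(\xi)=(2\pi)^{-d}\, i\xi_\ell\int_{\R^d}\widehat Z_i(\xi-\eta)\widehat f_\ell(\eta)\dd\eta ,
\]
so that $|\xi|^{-1}\bigl|\widehat{(f\cdot\nabla)Z}(\xi)\bigr|\le C\int|\widehat Z(\xi-\eta)||\widehat f(\eta)|\dd\eta$. The output weight $|\xi|^{-1}$ is exactly cancelled by the outer derivative. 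Integrating in $\xi$ and applying Young's inequality for convolutions in $L^1$ then yields
\[
  \norm[\cX^{-1}]{(f\cdot\nabla)w}\le C\norm[\cX^0]{f}\norm[\cX^0]{w},
\]
which is \eqref{eq:comp-prod-2}. This is the same cancellation already used for \eqref{eq:bilinear-X}.

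For \eqref{eq:comp-prod-1} the input $f$ is only in $\cX^{-1}$, so it must absorb a weight $|\eta|^{-1}$ while $Z$ supplies $|\xi-\eta|$. I would combine the two available forms of the product. In the divergence form the multiplier is bounded by $|\xi|$, while in the transport form $f_\ell\,\partial_\ell Z$ it is bounded by $|\xi-\eta|$. The integrand is therefore controlled by
\[
  \min\!\big(|\xi|,|\xi-\eta|\big)\,|\widehat f(\eta)|\,|\widehat Z(\xi-\eta)|,
\]
since the two Fourier expressions coincide pointwise. The task is then to show that
\[
  \frac{\min(|\xi|,|\xi-\eta|)}{|\xi|}\le C\,\frac{|\xi-\eta|}{|\eta|}
\]
on the region of integration.

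I would split into two cases. If $|\eta|\le2|\xi-\eta|$, use the transport form: the bound $|\xi-\eta|/|\xi|$ is at most a constant times $|\xi-\eta|/|\eta|$ precisely when $|\eta|\lesssim|\xi|$. If $|\eta|\le 2|\xi-\eta|$ but $|\xi|\ll|\eta|$, use instead the divergence form, which gives the bound $1\le 2|\xi-\eta|/|\eta|$. If $|\eta|>2|\xi-\eta|$, then $|\xi|\simeq|\eta|$ and the transport form gives $|\xi-\eta|/|\xi|\simeq|\xi-\eta|/|\eta|$. In every case the multiplier is bounded by $C|\eta|^{-1}|\xi-\eta|$. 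Young's inequality then gives $\norm[\cX^{-1}]{f}\norm[\cX^1]{Z}$.

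The main obstacle is the case analysis for \eqref{eq:comp-prod-1}, in particular the high--high-to-low region $|\xi|\ll|\eta|\simeq|\xi-\eta|$. There the divergence structure is indispensable, and without $\diver f=0$ the estimate fails. I would check carefully that the pointwise use of the minimum is legitimate. It is, because the divergence and transport forms are the same function of $\xi$, so either representation may be used on each region in the $(\xi,\eta)$-integral before taking absolute values.
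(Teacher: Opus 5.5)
Your proof is correct and is essentially the paper's argument. You use the same split at $|\eta|\le 2|\xi-\eta|$ (where the divergence form alone already gives $1\le 2|\xi-\eta|/|\eta|$, so your sub-case there is unnecessary) versus $|\xi|\simeq|\eta|$ (transport form), followed by Young's inequality, and the divergence form plus the $\cX^0$ algebra for the second bound.

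One justification should be corrected. The region-wise swap is legitimate because $\eta\cdot\widehat f(\eta)=0$ gives $\xi\cdot\widehat f(\eta)=(\xi-\eta)\cdot\widehat f(\eta)$ for every pair $(\xi,\eta)$, so the two integrands agree pointwise. That is stronger than, and not implied by, the two forms defining the same function of $\xi$, which is the reason you gave.
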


\begin{proof}
For \eqref{eq:comp-prod-1}, write the input frequencies as $\lambda_1,\mu$ and the output as $\xi=\lambda_1+\mu$. Since $\lambda_1\cdot\widehat f(\lambda_1)=0$,
\[
  \frac{\abs{\widehat f(\lambda_1)\cdot\mu}}{|\xi|}
  \le C\frac{\abs{\widehat f(\lambda_1)}}{|\lambda_1|}|\mu|.
\]
Indeed, if $|\mu|\ge|\lambda_1|/2$, use $\widehat f(\lambda_1)\cdot\mu=\widehat f(\lambda_1)\cdot\xi$; otherwise $|\xi|\ge|\lambda_1|/2$. Integrating proves \eqref{eq:comp-prod-1}. For \eqref{eq:comp-prod-2}, use $(f\cdot\nabla)w=\diver(w\otimes f)$ and the $\cX^0$ algebra estimate.
\end{proof}

\begin{lemma}[Constant-coefficient Stokes estimate with nonzero divergence]
\label{lem:Stokes-nonzero-div}
Suppose
\begin{equation}
\label{eq:Stokes-nonzero}
  \partial_tz-\nu\Delta z+\nabla\pi=\diver\mathbb K,
  \qquad
  \diver z=g=\diver G,
  \qquad
  \partial_tg=\diver R_g,
\end{equation}
with the natural compatibility condition at $t=0$. Then
\begin{align}
\label{eq:Stokes-estimate}
  \norm[\cE_T]{z}+\norm[L^1(0,T;\cX^{-1})]{\nabla\pi}
  \le C\big(&\norm[\cX^{-1}]{z(0)}
  +\norm[L^1(0,T;\cX^0)]{\mathbb K}
  +\norm[L^\infty(0,T;\cX^{-1})]{G}\\
  &+\nu\norm[L^1(0,T;\cX^0)]{g}
  +\norm[L^1(0,T;\cX^{-1})]{R_g}\big).
\end{align}
\end{lemma}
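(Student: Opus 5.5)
The natural route is to split $z$ into a gradient part carrying the prescribed divergence and a divergence-free remainder. Set
\[
  z_g:=\nabla\Delta^{-1}g=\nabla\Delta^{-1}\diver G=-\mathcal Q G,
\]
where $\mathcal Q=I-\Pcal$ has symbol $\xi\otimes\xi/|\xi|^2$, and put $w:=z-z_g$, so that $\diver w=0$. Every estimate will be done pointwise in frequency on $|\xi|^\sigma|\widehat{\,\cdot\,}|$. The reason is that $\Pcal$, $\mathcal Q$ and $\nabla\Delta^{-1}\diver$ are zero-order multipliers bounded by $1$, while $e^{\nu t\Delta}$ has a positive symbol bounded by $1$. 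Hence these operators are bounded on every $\cX^\sigma$, and the only work is in the time integrability.

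First I control $z_g$ directly from the data. Since $|\widehat{z_g}(\xi)|\le|\widehat G(\xi)|$,
\[
  \norm[L^\infty(0,T;\cX^{-1})]{z_g}\le\norm[L^\infty(0,T;\cX^{-1})]{G}.
\]
For the dissipative part, $|\xi|\,|\widehat{z_g}(\xi)|=|\widehat g(\xi)|$, so
\[
  \nu\norm[L^1(0,T;\cX^1)]{z_g}=\nu\norm[L^1(0,T;\cX^0)]{g}.
\]
This is exactly why the terms $G$ and $\nu g$ appear on the right-hand side of \eqref{eq:Stokes-estimate}.

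Next I derive the equation for $w$. Because $\Delta z_g=\nabla g$ is a gradient, it can be absorbed into the pressure. We have $\partial_tz_g=\nabla\Delta^{-1}\diver R_g$, so $w$ satisfies
\[
  \partial_tw-\nu\Delta w+\nabla\tilde\pi
  =\diver\mathbb K-\nabla\Delta^{-1}\diver R_g,
  \qquad
  \tilde\pi=\pi-\nu g,
  \qquad
  \diver w=0.
\]
Applying $\Pcal$ kills both $\nabla\tilde\pi$ and the second forcing term, which leaves the heat equation
\[
  \partial_tw-\nu\Delta w=\Pcal\diver\mathbb K,
  \qquad
  w(0)=\Pcal z(0).
\]
The compatibility condition is used precisely here: it guarantees $z(0)=\Pcal z(0)+z_g(0)$. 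I then estimate $w$ by Duhamel in Fourier variables, using
\[
  \abs{\widehat w(t,\xi)}\le e^{-\nu t|\xi|^2}\abs{\widehat{z(0)}(\xi)}
  +\int_0^te^{-\nu(t-s)|\xi|^2}|\xi|\,\abs{\widehat{\mathbb K}(s,\xi)}\dd s.
\]
Multiplying by $|\xi|^{-1}$ and taking the supremum in $t$ bounds the $L^\infty\cX^{-1}$ norm by $\norm[\cX^{-1}]{z(0)}+\norm[L^1\cX^0]{\mathbb K}$. Multiplying by $\nu|\xi|$ and integrating in $t$, Young's inequality together with $\int_0^\infty\nu|\xi|^2e^{-\nu t|\xi|^2}\dd t=1$ gives the same bound for $\nu\norm[L^1\cX^1]{w}$. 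This is the estimate \eqref{eq:heat-X} with a forcing term.

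The pressure is recovered from the normal part of the equation for $z$. Applying $\mathcal Q$ gives
\[
  \nabla\pi=\mathcal Q\diver\mathbb K-\mathcal Q\partial_tz+\nu\Delta\mathcal Qz.
\]
Here $\mathcal Qz=z_g$, so $\partial_t\mathcal Qz=\nabla\Delta^{-1}\diver R_g$ and $\nu\Delta\mathcal Qz=\nu\nabla g$. Measuring in $\cX^{-1}$, the three terms are bounded by $\norm[\cX^0]{\mathbb K}$, $\norm[\cX^{-1}]{R_g}$ and $\nu\norm[\cX^0]{g}$ respectively, and integrating in time gives the pressure bound.

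The main obstacle is not analytic but one of bookkeeping. One must check that putting $\Delta z_g$ into the pressure, and writing $\partial_t z_g$ in terms of $R_g$, is legitimate at low frequencies in the normalized realization \eqref{eq:Sh-prime}, and that the compatibility condition really means $\mathcal Qz(0)=z_g(0)$. If the latter were not available, I would instead keep $\mathcal Qz(0)$ as an additional term controlled by $\norm[\cX^{-1}]{z(0)}$, which appears on the right-hand side anyway.
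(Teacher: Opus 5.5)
Your proposal is correct and follows the paper's proof. Both use the Leray splitting $z=\Pcal z+\nabla\Delta^{-1}g$, the heat estimate for the solenoidal part with forcing $\Pcal\diver\mathbb K$, and direct control of the gradient part by $G$ and $\nu g$; both then recover $\nabla\pi$ from $(I-\Pcal)$ applied to the momentum equation, with the three terms bounded by $\mathbb K$, $R_g$ and $\nu g$. The only slip is a harmless sign: $\nabla\Delta^{-1}\diver$ has symbol $\xi\otimes\xi/|\xi|^2$, so $z_g=\mathcal Q G$ rather than $-\mathcal Q G$, and this does not affect any of your modulus bounds.
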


\begin{proof}
Apply $\Pcal$ to \eqref{eq:Stokes-nonzero}. The solenoidal part obeys the standard heat estimate with forcing $\Pcal\diver\mathbb K$. The gradient part is
\[
  (I-\Pcal)z=\nabla\Delta^{-1}g=\nabla\Delta^{-1}\diver G.
\]
Its $L^\infty_t\cX^{-1}$ norm is bounded by $\norm[L^\infty_t\cX^{-1}]{G}$, while
\[
  \nu\norm[L^1_t\cX^1]{\nabla\Delta^{-1}g}
  \le C\nu\norm[L^1_t\cX^0]{g}.
\]
Applying $I-\Pcal$ to the momentum equation yields
\[
  \nabla\pi=(I-\Pcal)\diver\mathbb K
  -\partial_t\nabla\Delta^{-1}g
  +\nu\Delta\nabla\Delta^{-1}g.
\]
The three terms are controlled by $\mathbb K$, $R_g$, and $\nu g$, respectively. Combining the solenoidal and gradient estimates gives \eqref{eq:Stokes-estimate}.
\end{proof}

\begin{proposition}[Compensated Lagrangian Stokes estimate]
\label{prop:compensated-Stokes}
Let $0<T\le1$, $Y(0)=0$, $\partial_tY=v$, $F=I+DY$, and $A=F^{-1}$. Suppose $\det F=1$ and
\begin{equation}
\label{eq:small-deformation}
  \norm[L^\infty(0,T;\cX^0)]{DY}\le\varepsilon_*
\end{equation}
for a sufficiently small $\varepsilon_*>0$. Let $(v,Q)$ solve
\begin{equation}
\label{eq:variable-Stokes}
\left\{
\begin{aligned}
  &\partial_tv-\nu\diver(AA^T\nabla v)+A^T\nabla Q=\diver H,\\
  &\diver(Av)=0,\\
  &v(0)=v_0,
\end{aligned}
\right.
\end{equation}
with $\diver v_0=0$. Put
\[
  \mathcal N_T:=\norm[\cE_T]{v}+\norm[L^1(0,T;\cX^{-1})]{\nabla Q}.
\]
Then
\begin{equation}
\label{eq:compensated-Stokes-estimate}
  \mathcal N_T
  \le C\left(\norm[\cX^{-1}]{v_0}
  +\norm[L^1(0,T;\cX^0)]{H}\right)
  +C\norm[L^\infty_T\cX^0]{DY}\mathcal N_T
  +C_\nu\mathcal N_T^2.
\end{equation}
The constants are independent of $T$ and of the Fourier supports of $v_0,H$.
\end{proposition}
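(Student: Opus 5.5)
We rewrite \eqref{eq:variable-Stokes} as a perturbation of the constant-coefficient system in \cref{lem:Stokes-nonzero-div}, with every commutator placed in divergence form so that only $\cX^0$ norms of the coefficients enter.

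\textbf{Setup.} Write $A=I+B$ with
\[
B=A-I=-DY+(DY)^2-\cdots .
\]
By \eqref{eq:X0-algebra} and \eqref{eq:small-deformation}, the Neumann series converges in $\cX^0$ and $\norm[L^\infty_T\cX^0]{B}\le C\norm[L^\infty_T\cX^0]{DY}$. Similarly $AA^T-I$ and $A^T-I$ are bounded in $\cX^0$ by $C\norm[L^\infty_T\cX^0]{DY}$.

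\textbf{Momentum equation.} Rewrite it as
\[
\partial_tv-\nu\Delta v+\nabla Q=\diver\mathbb K,
\qquad
\mathbb K:=H+\nu(AA^T-I)\nabla v-\mathbb Q,
\]
where the pressure commutator is put in divergence form using Piola. Since $\partial_j(\operatorname{cof}F)_{ij}=0$ and $\det F=1$ gives $A^T=\operatorname{cof}F$, we have
\[
(A^T\nabla Q)_i=\partial_j\big(A_{ji}Q\big),
\qquad
(A^T-I)\nabla Q=\diver\big(B^TQ\big)=:\diver\mathbb Q .
\]
This term must be estimated in $L^1_t\cX^0$, which needs $Q\in L^1_t\cX^0$. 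We do not have that directly; we only have $\nabla Q\in L^1_t\cX^{-1}$, which is equivalent up to constants. Multiplicativity \eqref{eq:X0-algebra} then gives
\[
\norm[L^1\cX^0]{B^TQ}\le C\norm[L^\infty\cX^0]{DY}\,\mathcal N_T .
\]
The viscous term gives
\[
\nu\norm[L^1\cX^0]{(AA^T-I)\nabla v}\le C\norm[\cX^0]{DY}\,\nu\norm[L^1\cX^1]{v}\le C\norm[\cX^0]{DY}\,\mathcal N_T .
\]

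\textbf{Divergence constraint.} From $\diver(Av)=0$,
\[
g:=\diver v=-\diver(Bv)=\diver G,\qquad G:=-Bv .
\]
\cref{lem:Stokes-nonzero-div} requires three bounds on this.

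First, $\norm[L^\infty\cX^{-1}]{G}$. This is the delicate product $\cX^0\cdot\cX^{-1}$, which fails generically, as remarked before \cref{lem:compensated-product}. We use compensation: write $Bv$ so that the derivative falls favorably. Since $B\approx -DY$, the leading part is $(DY)v=(v\cdot\nabla)Y$-like, and \eqref{eq:comp-prod-1} with $f=v$ divergence free, up to the $O(g)$ defect, and $Z=Y$ gives a bound by $\norm[\cX^{-1}]{v}\norm[\cX^1]{Y}$. Expanding $\operatorname{cof}$, the Piola structure lets us rewrite $\diver(Bv)=B^T{:}\nabla v$ plus terms handled via \eqref{eq:comp-prod-1}, and higher-order terms are handled likewise with extra $\cX^0$ factors.

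Second, $\nu\norm[L^1\cX^0]{g}\le C\norm[\cX^0]{DY}\,\nu\norm[L^1\cX^1]{v}$, directly from the algebra property.

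Third, $R_g$ from
\[
\partial_tg=-\diver(\partial_tB\,v+B\,\partial_tv).
\]
Here $\partial_tB=-A(Dv)A$. The term $B\,\partial_tv$ is substituted from the momentum equation, and the resulting pieces are estimated by \eqref{eq:comp-prod-2} and the time Cauchy--Schwarz bound behind \eqref{eq:bilinear-X}. The piece $\partial_tB\,v$ yields $\norm[L^1\cX^0]{Dv\,v}\le C_\nu\mathcal N_T^2$ by \eqref{eq:X-interpolation}. These produce the $\norm[\cX^0]{DY}\,\mathcal N_T$ and $C_\nu\mathcal N_T^2$ terms.

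\textbf{Main obstacle.} The hardest step is $G\in L^\infty\cX^{-1}$ and $R_g\in L^1\cX^{-1}$ without any carrier-scale loss. There I would first try to use $\diver(Av)=0$ to trade $Bv$ for $(\operatorname{cof}F-I)^T$-type expressions of null-Lagrangian form, such as $\partial_j(Y_i v_j)-\partial_i(Y_jv_j)$ in $d=2$, so that an explicit outer derivative cancels the $|\xi|^{-1}$ weight. If the $B\,\partial_tv$ term resists, the alternative is to choose $G=-Bv$ and $R_g=-(\partial_tB)v-B\,\partial_tv$ and absorb $B\,\partial_tv$ using the smallness of $\varepsilon_*$.

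\textbf{Conclusion.} Collecting the bounds in \eqref{eq:Stokes-estimate} gives \eqref{eq:compensated-Stokes-estimate}. The constants are independent of $T$ and of Fourier supports, because only the algebra property, \eqref{eq:X-interpolation} and \cref{lem:compensated-product} are used.
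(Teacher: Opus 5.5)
Your treatment of the momentum equation agrees with the paper: the tensor $\mathbb K$, the pressure commutator put in divergence form via Piola, and $\norm[\cX^0]{Q}\simeq\norm[\cX^{-1}]{\nabla Q}$. The bound on $\nu\norm[L^1_T\cX^0]{g}$ also agrees, once Piola moves the derivative onto $v$.

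The gap is exactly the step you flag as the main obstacle, and the arguments you give for it fail.

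You apply \eqref{eq:comp-prod-1} with $f=v$, but $v$ is not solenoidal. Its gradient part $\nabla\Delta^{-1}g$ enters an uncompensated $\cX^{-1}\cdot\cX^0$ product, which fails at high--high-to-low interactions. So it cannot be treated as a small ``$O(g)$ defect'' and absorbed. The higher Neumann terms $(DY)^\ell v$, $\ell\ge2$, are likewise not of the form $(f\cdot\nabla)Z$ with $\diver f=0$.

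The same problem recurs in $R_g$. Splitting it as $-(\partial_tB)v-B\partial_tv$ and substituting the momentum equation produces $B\nabla Q$ and $B\Delta v$, with only $\nabla Q,\Delta v\in L^1_T\cX^{-1}$. These products are again uncompensated, so your fallback (absorb $B\partial_tv$ by smallness of $\varepsilon_*$) rests on precisely the bilinear bound that fails.

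Three further points do not hold:
\begin{itemize}
\item The claim $\norm[L^1_T\cX^0]{Dv\,v}\le C_\nu\mathcal N_T^2$ is not available. $\mathcal N_T$ controls $v$ only in $L^2_T\cX^0$ and $L^1_T\cX^1$, so $\norm[\cX^1]{v}\norm[\cX^0]{v}$ is not integrable in time from $\mathcal N_T$.
\item What $R_g$ actually needs is an $\cX^{-1}$ bound, not an $\cX^0$ bound. Obtaining it via \eqref{eq:comp-prod-2} requires a solenoidal factor, and $-(\partial_tB)v=A(Dv)Av$ still carries an uncompensated factor $A$.
\item The two-dimensional null-Lagrangian rewriting with an outer derivative would need $\norm[\cX^0]{Y}$ and $v\in L^\infty_T\cX^0$. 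Neither is controlled by $\norm[\cX^0]{DY}$ and $\mathcal N_T$.
\end{itemize}

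The missing idea is the Piola velocity $\widetilde v:=Av$. The constraint says exactly $\diver\widetilde v=0$, and the identity
\[
  G=-Bv=(I-A)v=(F-I)Av=(\widetilde v\cdot\nabla)Y
\]
holds exactly, summing all Neumann orders at once. Now \eqref{eq:comp-prod-1} applies with $f=\widetilde v$ and gives $\norm[\cX^{-1}]{G}\le C\norm[\cX^0]{DY}\norm[\cX^{-1}]{\widetilde v}$.

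Since $v=\widetilde v+(\widetilde v\cdot\nabla)Y$ and the correction is at most $C\varepsilon_*\norm[\cX^{-1}]{\widetilde v}$, the map $\widetilde v\mapsto v$ is coercive on solenoidal fields: $\norm[\cX^{-1}]{\widetilde v}\le C\norm[\cX^{-1}]{v}$.

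Your $R_g$ is the same field, regrouped as $R_g=\partial_tG=(\partial_t\widetilde v\cdot\nabla)Y+(\widetilde v\cdot\nabla)v$. Both terms are compensated because $\partial_t\widetilde v$ and $\widetilde v$ are solenoidal. The first gives $C\norm[L^\infty_T\cX^0]{DY}\norm[L^1_T\cX^{-1}]{\partial_t\widetilde v}$. The second gives $C_\nu\mathcal N_T^2$ by \eqref{eq:comp-prod-2} and Cauchy--Schwarz in time.

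To finish, apply the same coercivity to $\partial_tv=\partial_t\widetilde v+(\partial_t\widetilde v\cdot\nabla)Y+(\widetilde v\cdot\nabla)v$. This bounds $\partial_t\widetilde v$ by $\partial_tv$ plus $C_\nu\mathcal N_T^2$. Then read $\partial_tv$ off your rewritten momentum equation, and insert everything into \cref{lem:Stokes-nonzero-div} to close the estimate.
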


\begin{proof}
The Neumann series and \eqref{eq:X0-algebra} give
\begin{equation}
\label{eq:A-small}
  \norm[\cX^0]{A-I}+\norm[\cX^0]{AA^T-I}
  \le C\norm[\cX^0]{DY}.
\end{equation}
Define the Piola velocity $\widetilde v=Av$. The constraint and the Piola identity imply $\diver\widetilde v=0$, while
\begin{equation}
\label{eq:Piola-graph}
  v=F\widetilde v=\widetilde v+(\widetilde v\cdot\nabla)Y.
\end{equation}
By \cref{lem:compensated-product} and \eqref{eq:small-deformation}, the map in \eqref{eq:Piola-graph} is coercive on solenoidal fields. In particular,
\begin{align}
\label{eq:tilde-v-Xminus}
  \norm[L^\infty_T\cX^{-1}]{\widetilde v}
  &\le C\norm[L^\infty_T\cX^{-1}]{v},\\
\label{eq:tilde-v-L2}
  \norm[L^2_T\cX^0]{\widetilde v}
  &\le C\norm[L^2_T\cX^0]{v},
  \qquad
  \norm[L^2_T\cX^0]{v}^2\le C_\nu\norm[\cE_T]{v}^2.
\end{align}
Set
\[
   G_v:=v-\widetilde v=(\widetilde v\cdot\nabla)Y,
   \qquad g:=\diver v=\diver G_v.
\]
Then
\begin{equation}
\label{eq:Gv}
   \norm[L^\infty_T\cX^{-1}]{G_v}
   \le C\norm[L^\infty_T\cX^0]{DY}\,\norm[L^\infty_T\cX^{-1}]{v}.
\end{equation}
The Piola identity also yields, componentwise,
\[
  g=(\delta_{ji}-A_{ji})\partial_jv_i,
\]
so
\begin{equation}
\label{eq:g-estimate}
  \nu\norm[L^1_T\cX^0]{g}
  \le C\norm[L^\infty_T\cX^0]{DY}\norm[\cE_T]{v}.
\end{equation}
Differentiating \eqref{eq:Piola-graph} in time gives
\[
  \partial_tv=\partial_t\widetilde v
  +(\partial_t\widetilde v\cdot\nabla)Y
  +(\widetilde v\cdot\nabla)v.
\]
Thus $\partial_tg=\diver R_g$ with
\[
  R_g=(\partial_t\widetilde v\cdot\nabla)Y
      +(\widetilde v\cdot\nabla)v.
\]
Using \cref{lem:compensated-product}, \eqref{eq:tilde-v-L2}, and coercivity of the Piola graph,
\begin{align}
\label{eq:Rg-estimate}
  \norm[L^1_T\cX^{-1}]{R_g}
  &\le C\norm[L^\infty_T\cX^0]{DY}
      \norm[L^1_T\cX^{-1}]{\partial_t\widetilde v}
      +C_\nu\mathcal N_T^2,\\
\label{eq:dt-vtilde}
  \norm[L^1_T\cX^{-1}]{\partial_t\widetilde v}
  &\le C\norm[L^1_T\cX^{-1}]{\partial_tv}+C_\nu\mathcal N_T^2.
\end{align}
Rewrite the momentum equation as
\begin{equation}
\label{eq:constant-rewrite}
  \partial_tv-\nu\Delta v+\nabla Q=\diver\mathbb K,
\end{equation}
where
\begin{equation}
\label{eq:K-tensor}
  \mathbb K=H+\nu(AA^T-I)\nabla v+Q(I-A^T).
\end{equation}
The last term is legitimate because the Piola identity gives
\[
  (I-A^T)\nabla Q=\diver\big(Q(I-A^T)\big).
\]
Moreover, $\norm[\cX^0]{Q}=\norm[\cX^{-1}]{\nabla Q}$, and therefore
\begin{equation}
\label{eq:K-bound}
  \norm[L^1_T\cX^0]{\mathbb K}
  \le \norm[L^1_T\cX^0]{H}
  +C\norm[L^\infty_T\cX^0]{DY}\mathcal N_T.
\end{equation}
Equation \eqref{eq:constant-rewrite} also gives
\begin{equation}
\label{eq:dt-v}
  \norm[L^1_T\cX^{-1}]{\partial_tv}
  \le C\big(\mathcal N_T+\norm[L^1_T\cX^0]{\mathbb K}\big).
\end{equation}
Substituting \eqref{eq:dt-v} and \eqref{eq:K-bound} into \eqref{eq:Rg-estimate}--\eqref{eq:dt-vtilde}, and then using \eqref{eq:Gv}--\eqref{eq:g-estimate} in \cref{lem:Stokes-nonzero-div}, yields \eqref{eq:compensated-Stokes-estimate} after decreasing $\varepsilon_*$.
\end{proof}

\subsection{Uniform analytic solvability}

\begin{proposition}[Uniform Lagrangian majorant]
\label{prop:uniform-Lagrangian}
For every $M_W>0$ there exist constants
\[
   r_*=r_*(d,\nu,M_W)>0,
   \qquad C_*=C_*(d,\nu,M_W)>1,
\]
with the following property. If $W$ is smooth and divergence free, $\norm[\cX^{-1}]{W}\le M_W$, and
\begin{equation}
\label{eq:small-delta-zeta}
  |\delta|+|\zeta|<r_*,
\end{equation}
then \eqref{eq:Lagrangian-system} has a unique smooth solution on $[0,1]$ satisfying
\begin{equation}
\label{eq:uniform-Lagrangian-bound}
  \norm[\cE_1]{v}
  +\norm[L^\infty(0,1;\cX^0)]{DY}
  +\norm[L^1(0,1;\cX^{-1})]{\nabla Q}
  \le C|\delta|.
\end{equation}
The map $(\delta,\zeta)\mapsto(Y,v,Q)$ is real analytic. With
\begin{equation}
\label{eq:analytic-expansion}
  Y=\sum_{r\ge1}\sum_{s\ge0}\delta^r\zeta^sY_{r,s},
  \qquad v=\partial_tY,
\end{equation}
one has, for $N=r+s$,
\begin{equation}
\label{eq:coefficient-majorant}
  \norm[\cE_1]{\partial_tY_{r,s}}
  +\norm[L^\infty(0,1;\cX^0)]{DY_{r,s}}
  +\norm[L^1(0,1;\cX^{-1})]{\nabla Q_{r,s}}
  \le C_*^N.
\end{equation}
\end{proposition}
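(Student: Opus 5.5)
We will prove \cref{prop:uniform-Lagrangian} by a contraction argument in a Banach space of functions of $(t,a)$, carried out uniformly for complex parameters $(\delta,\zeta)$ in a small polydisc. Real analyticity and the coefficient majorants \eqref{eq:coefficient-majorant} then follow from Cauchy's estimates. Let $\mathcal Z$ be the space of triples $(Y,v,Q)$ with $Y(0)=0$ and $\partial_tY=v$, equipped with the norm
\[
  \norm[\mathcal Z]{(Y,v,Q)}
  :=\norm[\cE_1]{v}+\norm[L^\infty(0,1;\cX^0)]{DY}+\norm[L^1(0,1;\cX^{-1})]{\nabla Q}.
\]
Since $DY(t)=\int_0^tDv\,ds$, we have $\norm[L^\infty_1\cX^0]{DY}\le\norm[L^1_1\cX^1]{v}\le\nu^{-1}\norm[\cE_1]{v}$. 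The $DY$ component is therefore controlled by the velocity and need not be estimated separately.

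\textbf{Step 1: the linearized operator.} Given a frozen deformation $\bar Y$ with $\norm[L^\infty\cX^0]{D\bar Y}\le\varepsilon_*$, let $\bar A=(I+D\bar Y)^{-1}$. We solve the variable-coefficient Stokes system \eqref{eq:variable-Stokes} with coefficients $\bar A$, data $v_0=\delta W$, and forcing $\diver H$, where
\[
  H=\zeta\,(e\cdot\nabla)Y\otimes e
\]
(so that $\diver H=\zeta(e\cdot\nabla)^2Y$). \Cref{prop:compensated-Stokes}, with the quadratic term $C_\nu\mathcal N_T^2$ absorbed in the small ball, gives
\[
  \mathcal N\le C\big(|\delta|M_W+|\zeta|\,\norm[L^1\cX^0]{DY}\big).
\]
The $\zeta$-term is admissible because $\norm[L^1(0,1;\cX^0)]{DY}\le\norm[L^\infty\cX^0]{DY}$, which is small relative to $\mathcal N$ after multiplication by $|\zeta|<r_*$. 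Existence for the linear problem I would get from the same estimate via a continuity method in $\bar Y$, starting from the constant-coefficient Stokes system of \cref{lem:Stokes-nonzero-div}; the a priori bound is the essential content. The proposition's constants are independent of $T$, so working on $[0,1]$ is harmless.

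\textbf{Step 2: the fixed point and difference estimates.} Define $\mathcal M(\bar Y,\bar v,\bar Q)$ as the solution of Step 1 with $Y=\int_0^t v$, so that the $\zeta$-term is treated implicitly and linearly. On the ball $\mathcal N\le 2C|\delta|M_W$, with $2C|\delta|M_W\nu^{-1}\le\varepsilon_*$, the map $\mathcal M$ sends the ball into itself. For contraction, subtract the equations for two frozen states $\bar Y_1,\bar Y_2$. The difference solves \eqref{eq:variable-Stokes} with coefficients $\bar A_1$, source terms of the form
\[
  \diver\big(\nu(\bar A_1\bar A_1^T-\bar A_2\bar A_2^T)\nabla v_2\big),
  \qquad
  \diver\big(Q_2(\bar A_1^T-\bar A_2^T)\big),
\]
and a constraint defect $\diver\big((\bar A_1-\bar A_2)v_2\big)$ in the divergence condition.

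I expect the main obstacle to be this constraint defect. \Cref{prop:compensated-Stokes} assumes $\diver(Av)=0$ exactly, so the difference argument requires a version with a nonzero Piola divergence. I would handle it by returning to \cref{lem:Stokes-nonzero-div}: write the defect as $g=\diver G$ with
\[
  G=(\bar A_2-\bar A_1)v_2 .
\]
Its time derivative is again a divergence, by the Piola identity and $\partial_t\bar Y=\bar v$. Each resulting term is bounded by \cref{lem:compensated-product} by
\[
  \norm[L^\infty\cX^0]{D(\bar Y_1-\bar Y_2)}\,\mathcal N_2,
  \qquad \mathcal N_2=O(|\delta|).
\]
This yields a Lipschitz constant $O(|\delta|+|\zeta|)<1/2$, hence a unique fixed point, and \eqref{eq:uniform-Lagrangian-bound} follows. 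Uniqueness among smooth solutions follows from the same difference estimate. Smoothness follows by propagating $\cX^{\sigma}$ norms for $\sigma>-1$, since $W$ is Schwartz; this is a routine commutator-free repetition of the argument, because multiplication by $\cX^0$ coefficients is linear.

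\textbf{Step 3: analyticity and majorants.} All estimates hold verbatim for complex $(\delta,\zeta)$ with $|\delta|,|\zeta|<r_*$, and $\mathcal M$ depends holomorphically on $(\delta,\zeta)$, being polynomial in $\zeta$ and linear in $\delta$ through the data. Hence the fixed point is a holomorphic $\mathcal Z$-valued map, as a uniform limit of holomorphic Picard iterates, and it is bounded by $C r_*$ on the polydisc. It vanishes at $\delta=0$ by uniqueness, which accounts for the range $r\ge1$ in \eqref{eq:analytic-expansion}. The Cauchy estimates on the polydisc of radius $r_*$ give
\[
  \norm[\mathcal Z]{(Y_{r,s},\partial_tY_{r,s},Q_{r,s})}\le C r_*\,r_*^{-r-s}\le C_*^{N},
\]
which is \eqref{eq:coefficient-majorant}. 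Restricting to real parameters gives real analyticity, and $\det F=1$ is preserved because $\diver(Av)=0$ implies $\partial_t\det F=0$.
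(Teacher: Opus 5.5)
Your overall plan is the paper's: a contraction on a ball of radius $O(|\delta|)$ in the norm of \eqref{eq:uniform-Lagrangian-bound}, driven by \cref{prop:compensated-Stokes} and its difference version, run for complex $(\delta,\zeta)$, then Cauchy estimates on a fixed bidisc plus vanishing at $\delta=0$. The fixed-point map is different. The paper keeps the explicit constant-coefficient Stokes operator of \cref{lem:Stokes-nonzero-div} and puts all variable-coefficient effects into the tensor $\mathbb K$ of \eqref{eq:K-tensor} and the divergence defect of the Piola graph \eqref{eq:Piola-graph}, so no linear existence theory is needed. Your frozen-coefficient map instead needs two extra ingredients. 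One is a solution theory for a constrained variable-coefficient problem (your continuity method). The other is an adapted \cref{prop:compensated-Stokes}, whose proof uses $\partial_tY=v$ and $\det F=1$ for the same $Y$ that defines $A$.

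The step that fails as written is the difference estimate you singled out.
\begin{itemize}
\item The defect $G=(\bar A_2-\bar A_1)v_2$ is an $\cX^0$ matrix times the non-solenoidal field $v_2$. So \cref{lem:compensated-product} does not apply, and the generic bound $\cX^0\cdot\cX^{-1}\subset\cX^{-1}$ is false. The term $(\bar A_2-\bar A_1)\partial_tv_2$ in $\partial_tG$ has the same problem.
\item \cref{lem:Stokes-nonzero-div} needs the Euclidean divergence of $v_1-v_2$, not a Piola divergence. Converting $\diver(\bar A_1(v_1-v_2))=\diver G$ into it through $\bar F_1=I+D\bar Y_1$ produces another uncompensated product, $\big(((I-\Pcal)G)\cdot\nabla\big)\bar Y_1$.
\end{itemize}
The remedy is to difference the solenoidal Piola velocities $\widetilde v_1=\bar A_1v_1$ and $\widetilde v_2=\bar A_2v_2$ rather than the $v$'s. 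With $w=\widetilde v_1-\widetilde v_2$,
\[
  v_1-v_2=w+(w\cdot\nabla)\bar Y_1+(\widetilde v_2\cdot\nabla)(\bar Y_1-\bar Y_2).
\]
The divergence defect and its time derivative are then compensated products whose first factor is one of the solenoidal fields $w$, $\partial_tw$, $\widetilde v_2$, $\partial_t\widetilde v_2$. \cref{lem:compensated-product} then gives the Lipschitz bound you want.

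A second issue remains. The $L^1\cX^0$ bound on the divergence, as in \eqref{eq:g-estimate}, and your pressure term $\diver\big(Q_2(\bar A_1^T-\bar A_2^T)\big)$ both rely on the Piola identity $\partial_j\bar A_{ji}=0$, that is, on $\det(I+D\bar Y)=1$. A frozen iteration does not preserve this: $\diver(\bar Av)=0$ does not force $\det(I+DY)=1$ for the new iterate. Your closing remark on $\det F$ applies only at the fixed point. So the frozen problem must be formulated so that this identity is not used along the iteration.
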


\begin{proof}
Since $Y(t)=\int_0^tv(s)\dd s$,
\begin{equation}
\label{eq:DY-from-v}
  \norm[L^\infty(0,T;\cX^0)]{DY}
  \le \norm[L^1(0,T;\cX^1)]{v}.
\end{equation}
Write the magnetic forcing as
\[
  \zeta(e\cdot\nabla)^2Y=\diver H_\zeta,
  \qquad
  H_\zeta=\zeta\big((e\cdot\nabla)Y\big)\otimes e.
\]
Then
\begin{equation}
\label{eq:Hzeta}
  \norm[L^1(0,T;\cX^0)]{H_\zeta}
  \le |\zeta|T\norm[L^1(0,T;\cX^1)]{v}.
\end{equation}
For a smooth local solution, \cref{prop:compensated-Stokes}, \eqref{eq:DY-from-v}, and \eqref{eq:Hzeta} imply, as long as the deformation remains small,
\begin{equation}
\label{eq:continuity-ineq}
  \mathcal N_T
  \le C_0|\delta|+C_1\big(|\zeta|+\mathcal N_T\big)\mathcal N_T,
  \qquad 0<T\le1.
\end{equation}
Choose $r_*$ so small that the quadratic terms are absorbable. A standard continuity argument gives $\mathcal N_T\le2C_0|\delta|$ for every $T\le1$. This keeps $F$ invertible and the time-integrated Lipschitz norm small.

For completeness, we indicate the fixed-point construction and parameter dependence. Rewrite \eqref{eq:Lagrangian-system} as the constant-coefficient Stokes system \eqref{eq:constant-rewrite}, with the nonlinear tensor \eqref{eq:K-tensor} and the divergence defect determined by the Piola graph \eqref{eq:Piola-graph}. On the complexification of
\[
  \mathfrak E_T
  :=\left\{(Y,v,\nabla Q):Y(t)=\int_0^tv(s)\dd s,
  \ \norm[\cE_T]{v}+\norm[L^1_T\cX^{-1}]{\nabla Q}<\infty\right\},
\]
the maps
\[
  DY\mapsto(I+DY)^{-1},
  \qquad
  (Y,\widetilde v)\mapsto\widetilde v+(\widetilde v\cdot\nabla)Y,
\]
and all products in \eqref{eq:K-tensor} are analytic on a fixed ball. The estimates in \cref{prop:compensated-Stokes}, applied also to differences, show that the corresponding Stokes solution operator is a contraction on the ball
\[
   \mathbb B_{2C_0|\delta|}\subset\mathfrak E_1
\]
whenever $|\delta|+|\zeta|<r_*$. The contraction constant and the radius of the parameter bidisc are independent of the Fourier support of $W$. Hence the analytic contraction theorem gives analytic dependence on $(\delta,\zeta)$.

The same a priori bound controls the low norms that enter the standard high-regularity Lagrangian parabolic continuation criterion. For each fixed smooth $W$, differentiating \eqref{eq:Lagrangian-system} at an arbitrary Sobolev order gives a linear Gronwall estimate whose coefficients depend on the already controlled deformation and integrated Lipschitz norm. Thus the smooth local solution extends to $[0,1]$.

After complexification, Cauchy's estimate on a parameter bidisc of fixed radius yields \eqref{eq:coefficient-majorant}, after increasing $C_*$. Since the solution is zero when $\delta=0$, every nonzero coefficient has $r\ge1$.
\end{proof}

\subsection{Propagation of Fourier support and the target-scale displacement}

Let
\begin{equation}
\label{eq:Gamma}
  \Gamma_{K,\lambda}
  :=\{\sigma\xi_c+\tau\beta_\lambda:\sigma,\tau\in\{-1,1\}\}.
\end{equation}

\begin{lemma}[Fourier support of the analytic coefficients]
\label{lem:support-propagation}
There is a constant $C_s\ge2$, independent of $K,\lambda,r,s$, such that every coefficient $Y_{r,s}$ of total order $N=r+s$ in \eqref{eq:analytic-expansion} has Fourier support contained in a union of at most $C_s^N$ balls of radius $C_sN$. Each ball is centered at a sum of at most $C_sN$ elements of $\Gamma_{K,\lambda}$. In particular, every center has the form
\begin{equation}
\label{eq:center-form}
  m\xi_c+n\beta_\lambda,
  \qquad m,n\in\Z,
  \qquad |m|+|n|\le C_sN,
\end{equation}
and
\begin{equation}
\label{eq:global-coeff-support}
  \supp\widehat Y_{r,s}\subset B(0,C_sNK).
\end{equation}
\end{lemma}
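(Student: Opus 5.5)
We prove the statement by induction on the total order $N=r+s$. The coefficients are computed recursively by substituting the expansion \eqref{eq:analytic-expansion} into \eqref{eq:Lagrangian-system}, written in the constant-coefficient form \eqref{eq:constant-rewrite}--\eqref{eq:K-tensor}. The key point is that every nonlinearity is polynomial, or a convergent power series, in $DY$, $v$, $\nabla v$ and $Q$. The only nonlocal operations, $\Pcal$, $\Delta^{-1}$ and the heat semigroup, are Fourier multipliers and do not enlarge Fourier supports.

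\textbf{Base case.} The base case is $(r,s)=(1,0)$. The linear problem is the heat--Stokes flow of $W$: $\partial_tY_{1,0}=e^{\nu t\Delta}W$. Hence $\supp\widehat Y_{1,0}\subset\supp\widehat W$, which by \eqref{eq:W-packet} is the union of the four balls $B(\gamma,r_0)$, $\gamma\in\Gamma_{K,\lambda}$. Taking $C_s\ge\max(4,r_0)$ covers this. For $r=0$ the coefficients vanish, since $Y$ is zero when $\delta=0$.

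\textbf{Recursive identity.} I would first extract the identity for the order-$(r,s)$ coefficient. Write $A=\sum_{n\ge0}(-DY)^n$ and $AA^T$ via the Neumann series. The coefficient of $\delta^r\zeta^s$ in each nonlinear term is a finite sum of products
\[
\partial^{\alpha_1}Z_1\cdots\partial^{\alpha_p}Z_p,
\]
where each $Z_i$ is one of $Y_{r_i,s_i}$, $\partial_tY_{r_i,s_i}$, $Q_{r_i,s_i}$, and $\sum(r_i,s_i)=(r,s)$ with each $r_i+s_i\ge1$. Such products have $p\le N$ factors, each of order strictly less than $N$ except in the linear terms. The magnetic forcing $\zeta(e\cdot\nabla)^2Y$ contributes $(e\cdot\nabla)^2Y_{r,s-1}$, which keeps the support of a lower-order coefficient.

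The divergence constraint $\diver(Av)=0$ determines the gradient part of $\partial_tY_{r,s}$ from lower orders. The pressure $Q_{r,s}$ is likewise given by Fourier multipliers applied to lower-order products (Lemma~\ref{lem:Stokes-nonzero-div} structure). Consequently,
\[
\supp\widehat Y_{r,s}\subset\bigcup\Big(\supp\widehat Z_1+\dots+\supp\widehat Z_p\Big).
\]

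\textbf{Main obstacle: bookkeeping.} The main difficulty is the bookkeeping, so that the constants stay of the form $C_s^N$, $C_sN$, $C_sN$, independent of $K$ and $\lambda$.

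For the radius, a Minkowski sum of balls of radii $C_sN_i$ with $\sum N_i=N$ has radius $\le C_s\sum N_i=C_sN$. Its center is a sum of $\le\sum C_sN_i=C_sN$ elements of $\Gamma_{K,\lambda}$. This additivity is exactly why the orders must be tracked as $N_i$ summing to $N$, rather than bounding by $N-1$ per factor.

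The count of balls needs a generating-function argument. The number of compositions of $(r,s)$ into ordered parts, times the number of ball choices $\prod C_s^{N_i}=C_s^N$, is summed over compositions. There are at most $2^{2N}$ compositions, and the term types are finitely many, with at most $C^N$ index patterns. This gives at most $(C'C_s)^N$ balls, which is not closed under the induction.

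To fix this, I would count distinct centers instead of balls. Each center has the form \eqref{eq:center-form}, $m\xi_c+n\beta_\lambda$ with $|m|+|n|\le C_sN$, so there are at most $(2C_sN+1)^2\le C_s^N$ choices for $C_s$ large. Balls with the same center merge into one ball of the maximal radius. With the bound on centers and on radii, this closes the induction without exponential bookkeeping.

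Finally, \eqref{eq:global-coeff-support} follows because $|m\xi_c+n\beta_\lambda|\le C_sN(K+\lambda)+C_sN\le 2C_sNK$ for $K$ large. Enlarging $C_s$ once absorbs the factor $2$, and this enlargement is consistent with the other two claims since they are monotone in $C_s$.
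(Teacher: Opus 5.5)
Your induction is the paper's argument. Products add Fourier supports, while the Neumann-series coefficients, Leray and Stokes multipliers, heat propagators and time integration preserve them, and the $\zeta$-forcing only reproduces the spectrum of $Y_{r,s-1}$. You go beyond the paper in the ball count. The paper only says that the number of balls grows exponentially and that enlarging $C_s$ closes the induction. As you observe, counting balls with multiplicity over all splittings of $(r,s)$ produces a factor $(C')^N C_s^N$, which does not close. Counting distinct lattice centers $m\xi_c+n\beta_\lambda$ is the right repair. Only the center form, the radius and \eqref{eq:global-coeff-support} are used later, in \cref{lem:target-displacement}.

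One step of that repair is wrong as written. The inequality $(2C_sN+1)^2\le C_s^N$ fails for $N=1$ and $N=2$ however large $C_s$ is; for instance $(4C_s+1)^2>C_s^2$. Your base case covers $N=1$, but nothing covers $N=2$. The fix is to carry a sharper induction hypothesis: every center is a sum of at most $N$ elements of $\Gamma_{K,\lambda}$, and every radius is at most $r_0N$. This closes for the same reason as before, since the orders of the factors add up to at most $N$ and the linear magnetic term has order $N-1$. It gives $|m|,|n|\le N$, hence at most $(2N+1)^2\le 9^N$ distinct centers, which is at most $C_s^N$ once $C_s\ge\max(9,r_0)$. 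More directly, $Y_{r,s}$ is $r$-linear in $W$, so its spectrum lies in the $r$-fold sumset of $\supp\widehat W$. With this adjustment your proof is complete.
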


\begin{proof}
The assertion is proved by induction on $N$. At order one, the only source is $W$, whose support is the union of four fixed-radius balls centered at $\Gamma_{K,\lambda}$. Suppose the statement holds below order $N$. Expand
\[
  A=(I+DY)^{-1}=\sum_{\ell\ge0}(-DY)^\ell
\]
coefficientwise. At total order $N$, only $\ell\le N$ occurs, and every term in the equations for $(Y_{r,s},Q_{r,s})$ is a finite product of lower-order coefficients whose total order is $N$, followed by constant-coefficient Fourier multipliers and heat/Stokes propagators. Products add Fourier supports, whereas the multipliers and propagators preserve them. Thus radii add linearly, the number of balls grows at most exponentially, and every center is a sum of $O(N)$ initial centers. Enlarging $C_s$ closes the induction. Since every element of $\Gamma_{K,\lambda}$ has size $O(K)$, \eqref{eq:global-coeff-support} follows.
\end{proof}

\begin{lemma}[Target-scale displacement]
\label{lem:target-displacement}
There exists $\varepsilon_1>0$ such that, whenever
\begin{equation}
\label{eq:amplitudes-small}
  0<\eta\le\delta\le\varepsilon_1,
\end{equation}
the solution of \eqref{eq:Lagrangian-system} satisfies
\begin{equation}
\label{eq:target-displacement}
  \norm[L^\infty(0,T_K;\cX^0)]{DY}
  +\lambda\norm[L^\infty(0,T_K;\cX^0)]{Y}
  \le C\delta,
\end{equation}
with $C$ independent of $k$ and $j\in J_k$.
\end{lemma}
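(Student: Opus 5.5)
The bound on $DY$ comes directly from \cref{prop:uniform-Lagrangian}. The bound on $\lambda Y$ is proved coefficient by coefficient in the analytic expansion \eqref{eq:analytic-expansion}, splitting Fourier space by \cref{lem:support-propagation} into carrier, target and near-zero regions. For the $DY$ part: by \cref{lem:input-bounds}, $\norm[\cX^{-1}]{W}\le C=:M_W$. With $\zeta=\eta^2\le\delta^2$, \eqref{eq:amplitudes-small} gives $|\delta|+|\zeta|\le 2\varepsilon_1<r_*(d,\nu,M_W)$ once $\varepsilon_1$ is small. Then \eqref{eq:uniform-Lagrangian-bound} yields $\norm[L^\infty(0,1;\cX^0)]{DY}\le C\delta$, and $T_K\le1$ for large $K$. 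The real content is the bound $\lambda\norm[L^\infty(0,T_K;\cX^0)]{Y}\le C\delta$. A crude argument, $\norm[\cX^0]{Y(t)}\le\int_0^t\norm[\cX^0]{v}\le T_K^{1/2}\norm[L^2\cX^0]{v}$, only gives $\lambda T_K^{1/2}\delta\sim K^{1/4}\delta$. That is not uniform, so the Fourier geometry has to be used.

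Write $Y=\sum\delta^r\zeta^sY_{r,s}$. By \cref{lem:support-propagation}, each coefficient of order $N$ is supported in at most $C_s^N$ balls of radius $C_sN$ centred at $m\xi_c+n\beta_\lambda$ with $|m|+|n|\le C_sN$. Consider first orders $N\le c\lambda$ with $c$ small (recall $\lambda\le K^{1/2}$). Each such ball lies in one of three regions:
\begin{enumerate}[label=(\roman*)]
\item $m\ne0$: then $|\xi|\gtrsim K$, since $|n\beta_\lambda|\lesssim N\lambda\ll K$;
\item $m=0$, $n\ne0$: then $|\xi|\ge\lambda/4$, because the radius $C_sN\ll\lambda$;
\item $m=n=0$: then $|\xi|\le C_sN$.
\end{enumerate}
Let $\Pi_{\mathrm{i}},\Pi_{\mathrm{ii}},\Pi_{\mathrm{iii}}$ be the corresponding Fourier restrictions. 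Since $\cX^0$ is an $L^1$ norm of $\widehat Y$, these restrictions are bounded on the coefficients. The three pieces are then estimated as follows.

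In region (i), $\norm[\cX^0]{\Pi_{\mathrm{i}}Y_{r,s}}\le CK^{-1}\norm[\cX^0]{DY_{r,s}}\le CK^{-1}C_*^N$, and $\lambda/K\le K^{-1/2}$.

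In region (ii), the same argument gives $\lambda\norm[\cX^0]{\Pi_{\mathrm{ii}}Y_{r,s}}\le 4\norm[\cX^0]{DY_{r,s}}\le 4C_*^N$.

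In region (iii), use $Y_{r,s}(t)=\int_0^t\partial_tY_{r,s}$ and $|\xi|\le C_sN$:
\[
\norm[\cX^0]{\Pi_{\mathrm{iii}}Y_{r,s}(t)}\le C_sN\,T_K\,\norm[L^\infty_T\cX^{-1}]{\partial_tY_{r,s}}\le C_sNT_KC_*^N,
\]
where the last step uses \eqref{eq:coefficient-majorant}. Since $\lambda T_K=c_*\lambda K^{-1/2}\le c_*$, this piece contributes $\lambda\cdot C_sNT_KC_*^N\le c_*C_sNC_*^N$.

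Now sum over $N\le c\lambda$ with weights $\delta^r\zeta^s\le\delta^N$, counting at most $C_s^N$ balls per coefficient. This gives a geometric series $\sum_N (C'\delta)^N$ starting at $N=1$. Hence $\lambda\norm[\cX^0]{Y}\le C\delta$ once $\varepsilon_1<1/(2C')$.

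For the tail $N>c\lambda$, no geometry is needed. By \eqref{eq:global-coeff-support} and \eqref{eq:coefficient-majorant},
\[
\norm[\cX^0]{Y_{r,s}}\le\int_0^{T_K}\norm[\cX^0]{\partial_tY_{r,s}}\le C_sNK\,C_*^N .
\]
Because $K\le\lambda^4$, the tail is bounded by $\lambda^5\sum_{N>c\lambda}N(2C_*\delta)^N\le C\lambda^5(2C_*\delta)^{c\lambda}$, after enlarging $C_*$ to absorb the pair count. This is $\le C\delta$ uniformly in $\lambda\ge16$ for $\delta$ small; alternatively one can cut at $N\sim\log K$ and argue the same way.

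The main obstacle is region (iii), the Fourier mass near the origin. There, neither the $DY$ bound nor the $\cX^{-1}$ bound yields a factor $\lambda^{-1}$. The plan is to exploit the short time $T_K=c_*K^{-1/2}\lesssim\lambda^{-1}$ together with the $O(N)$ radius of those balls, which is what makes $\lambda T_K|\xi|$ bounded. One step needs care: the restriction $\Pi_{\mathrm{iii}}$ must commute with $\int_0^t$ and be compatible with the $\cX^{-1}$ coefficient bound. I expect this to hold because each $Y_{r,s}$ is a fixed function with fixed Fourier support, so the restriction is simply multiplication of $\widehat Y_{r,s}$ by an indicator.
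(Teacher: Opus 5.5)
Your proposal is correct and follows essentially the same route as the paper. The $DY$ bound comes from \cref{prop:uniform-Lagrangian}. For $N\le\gamma\lambda$ the coefficients split into a part with $|\xi|\gtrsim\lambda$, controlled by the $\cX^0$ bound on $DY_{r,s}$ (equivalently $\norm[L^1\cX^1]{\partial_tY_{r,s}}$), and a central part in $B(0,C_sN)$, controlled by $\lambda T_K\le c_*$ and the $L^\infty\cX^{-1}$ bound on $\partial_tY_{r,s}$. The tail $N>\gamma\lambda$ uses the global support $B(0,C_sNK)$ together with $\lambda\ge K^{1/4}$. The differences are cosmetic: you separate the carrier and target regions, which the paper merges into one noncentral region. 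In the tail, keep the explicit factor $\delta$ from $r\ge1$ instead of writing $C\lambda^5(2C_*\delta)^{c\lambda}$; that is what gives $\le C\delta$ uniformly in $\lambda$, since $c\lambda$ need not exceed $1$.
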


\begin{proof}
The bound for $DY$ follows from \cref{prop:uniform-Lagrangian}. We prove the estimate for $\lambda Y$ coefficientwise. Choose a sufficiently small geometric constant $\gamma>0$. For $N\le\gamma\lambda$, each ball described in \cref{lem:support-propagation} is either contained in $B(0,C_sN)$ or in $\{|\xi|\ge\lambda/8\}$. Indeed, if the center in \eqref{eq:center-form} has $m=0$ and $n\ne0$, then
\[
  |n\beta_\lambda|\ge\lambda/2,
\]
and the radius is absorbed by choosing $\gamma$ small. If $m\ne0$, then
\[
  |m\xi_c+n\beta_\lambda|
  \ge K-C_sN\lambda/2
  \ge K-C_s\gamma\lambda^2/2
  \ge K/2,
\]
using $\lambda^2\le K$ and a smaller $\gamma$.

On the noncentral part, $|\xi|\ge\lambda/8$, so \eqref{eq:coefficient-majorant} gives
\begin{equation}
\label{eq:noncentral-Y}
  \lambda\norm[L^\infty(0,T_K;\cX^0)]{Y_{r,s}}
  \le C\norm[L^1(0,T_K;\cX^1)]{\partial_tY_{r,s}}
  \le C C_*^N.
\end{equation}
On the central part, $|\xi|\le C_sN$ and $Y_{r,s}(0)=0$, hence
\begin{equation}
\label{eq:central-Y}
\begin{aligned}
  \lambda\norm[L^\infty(0,T_K;\cX^0)]{Y_{r,s}}
  &\le \lambda T_K C_sN
  \norm[L^\infty(0,T_K;\cX^{-1})]{\partial_tY_{r,s}}\\
  &\le C N C_*^N,
\end{aligned}
\end{equation}
where $\lambda T_K\le c_*$ because $\lambda\le K^{1/2}$.

For $N>\gamma\lambda$, the global support bound \eqref{eq:global-coeff-support} gives
\begin{equation}
\label{eq:high-order-Y}
  \lambda\norm[L^\infty(0,T_K;\cX^0)]{Y_{r,s}}
  \le C\lambda T_K NK C_*^N
  \le C NK C_*^N.
\end{equation}
Summing the coefficients of total order $N$ and using $r\ge1$,
\begin{equation}
\label{eq:tail-sum}
  \sum_{N>\gamma\lambda}
  \lambda\norm[L^\infty(0,T_K;\cX^0)]{Y_N}
  \le C\delta K\sum_{N>\gamma\lambda}N^2
       \big(C_*(\delta+\zeta)\big)^{N-1}.
\end{equation}
After decreasing $\varepsilon_1$, the ratio in the geometric tail is strictly less than one. Since $\lambda\ge K^{1/4}$, the exponential tail in $\lambda$ dominates the factor $K$ uniformly. Summing \eqref{eq:noncentral-Y}, \eqref{eq:central-Y}, and \eqref{eq:tail-sum} proves \eqref{eq:target-displacement}.
\end{proof}

\subsection{Identification of the first two magnetic coefficients}

The next lemma makes explicit the bridge between the analytic Lagrangian expansion and the Eulerian fields in \eqref{eq:H1}--\eqref{eq:H2}.

\begin{lemma}[First and second response coefficients]
\label{lem:coefficient-identification}
For $\eta\ne0$, write $h=\eta\mathcal H(\delta,\zeta)$ with $\zeta=\eta^2$. The map $\mathcal H$ extends analytically to a fixed bidisc around $(\delta,\zeta)=(0,0)$, and
\begin{equation}
\label{eq:first-second-coeff}
  \partial_\delta\mathcal H(0,0)=H^{(1)},
  \qquad
  \frac12\partial_\delta^2\mathcal H(0,0)=H^{(2)}.
\end{equation}
\end{lemma}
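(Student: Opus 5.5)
We pass from the Lagrangian solution back to Eulerian variables through the frozen-in formula \eqref{eq:frozen-in}. Since $b_0=\eta e$, we have $b(t,\Phi(t,a))=\eta(e+(e\cdot\nabla_a)Y)$, and hence
\[
  h(t,x)=\eta\,\big[(e\cdot\nabla_a)Y\big](t,\Phi^{-1}(t,x)).
\]
So $\mathcal H(\delta,\zeta)=\big[(e\cdot\nabla_a)Y\big]\circ\Phi^{-1}$, with $Y=Y(\delta,\zeta)$ from \cref{prop:uniform-Lagrangian}. Here $\zeta=\eta^2$ enters only through the Lagrangian system. This already gives a natural analytic extension. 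By \cref{prop:uniform-Lagrangian}, $(\delta,\zeta)\mapsto Y$ is analytic into a space controlling $DY$ in $L^\infty_t\cX^0$, which by \eqref{eq:uniform-Lagrangian-bound} is of size $O(|\delta|)$. For fixed smooth $W$, the smooth solution also gives analyticity in higher Sobolev norms through the same contraction argument at high regularity. Composition with the inverse flow $\Phi^{-1}=\mathrm{id}-Y\circ\Phi^{-1}$ is then analytic on a small bidisc. The reason is that $\Phi^{-1}$ solves an analytic implicit equation with small perturbation, and $f\mapsto f\circ(\mathrm{id}+Z)$ is analytic in $Z$ for smooth $f$ in the smooth class (in a topology of smooth functions, e.g.\ $H^m$ for all $m$). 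I would state analyticity in $H^m$ for each $m$, which suffices for \eqref{eq:first-second-coeff} as an identity of smooth functions.

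Next I would compute the coefficients. Expanding $Y=\delta Y_{1,0}+\delta^2Y_{2,0}+O(\delta\zeta)+O(\delta^3)$ and $\Phi^{-1}(x)=x-\delta Y_{1,0}(x)+O(\delta^2)$ gives
\[
  \mathcal H=\delta\,(e\cdot\nabla)Y_{1,0}+\delta^2\big[(e\cdot\nabla)Y_{2,0}-(Y_{1,0}\cdot\nabla)(e\cdot\nabla)Y_{1,0}\big]+O(\zeta\delta)+O(\delta^3).
\]
The $\zeta$-independent coefficients are obtained by setting $\zeta=0$, i.e.\ by the Lagrangian form of Navier--Stokes with data $\delta W$. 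At order $\delta$, the system linearizes to the Stokes/heat equation, so $\partial_tY_{1,0}=e^{\nu t\Delta}W=V^{(1)}$. Thus $Y_{1,0}=\int_0^tV^{(1)}$, and $(e\cdot\nabla)Y_{1,0}=H^{(1)}$.

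At order $\delta^2$, rather than expanding the Lagrangian equations directly, it is cleaner to note that the Eulerian velocity $u=v\circ\Phi^{-1}$ at $\zeta=0$ solves Navier--Stokes with data $\delta W$. Its Picard expansion is therefore $u=\delta V^{(1)}+\delta^2V^{(2)}+O(\delta^3)$. Similarly, the Eulerian magnetic equation at fixed $\zeta=0$ (i.e.\ formally dropping the $(h\cdot\nabla)h$, $\eta(e\cdot\nabla)h$ back-reaction) reads
\[
  \partial_t\mathcal H+(u\cdot\nabla)\mathcal H=(e\cdot\nabla)u+(\eta\mathcal H\cdot\nabla)u/\eta\cdot\eta .
\]
More precisely, it is $\partial_t\mathcal H+(u\cdot\nabla)\mathcal H=(e\cdot\nabla)u+(\mathcal H\cdot\nabla)u$, with $\mathcal H(0)=0$. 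Inserting the expansions and matching powers of $\delta$ gives $\mathcal H_1=\int_0^t(e\cdot\nabla)V^{(1)}=H^{(1)}$ at first order. At second order it gives $\partial_t\mathcal H_2=(e\cdot\nabla)V^{(2)}+(H^{(1)}\cdot\nabla)V^{(1)}-(V^{(1)}\cdot\nabla)H^{(1)}$, which is exactly \eqref{eq:H2}. The justification that the $\zeta$-dependence does not contaminate $\partial_\delta\mathcal H(0,0)$ and $\partial_\delta^2\mathcal H(0,0)$ is simply that these are derivatives at $\zeta=0$. At $\zeta=0$ the Lagrangian system is independent of $\eta$, so the function $\mathcal H(\delta,0)$ solves the linear transport–stretching equation above driven by the Navier--Stokes velocity.

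The main obstacle is the first step: making the analyticity of $\mathcal H$ rigorous. The bounds in \cref{prop:uniform-Lagrangian} are in the low Fourier-$L^1$ norms, which do not by themselves control composition with $\Phi^{-1}$. I would handle this by working for the fixed smooth $W$ in $C([0,1];H^m)$ for large $m$. I would rerun the analytic contraction for the Lagrangian system on a possibly smaller, $W$-dependent bidisc; only a neighborhood of $(0,0)$ matters for the derivatives. I would then invoke the analytic implicit function theorem for $\Phi^{-1}$. Uniformity of the bidisc radius is then inherited from the $\cX$-level majorant \eqref{eq:coefficient-majorant} via uniqueness of analytic continuation. The second-order matching is routine once differentiability in $\delta$ at $\zeta=0$ is available.
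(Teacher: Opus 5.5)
Your proposal follows the paper's proof. The frozen-in formula gives $h=\eta\,[(e\cdot\nabla_a)Y]\circ\Phi^{-1}$, which defines $\mathcal H$ through the Lagrangian solution, including at $\eta=0$; substituting $h=\eta\mathcal H$ into the Eulerian system, setting $\zeta=0$ so that $u$ is the Navier--Stokes solution with data $\delta W$, and matching powers of $\delta$ in $\partial_t\mathcal H+(u\cdot\nabla)\mathcal H=(e\cdot\nabla)u+(\mathcal H\cdot\nabla)u$ then yields $H^{(1)}$ and $H^{(2)}$, exactly as you describe.

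One caution on the analyticity step, which the paper also only asserts briefly: $Z\mapsto f\circ(\mathrm{id}+Z)$ is not analytic in $H^m$ for merely smooth $f$ (it is only $C^\infty$ with loss of derivatives), and uniqueness of analytic continuation cannot by itself enlarge a $W$-dependent bidisc to a fixed one. Neither point affects \eqref{eq:first-second-coeff}, because $C^2$ dependence on $\delta$ at $\zeta=0$ into $H^m$ for every $m$ already justifies the coefficient matching, and the uniform analytic majorant used later in \cref{prop:single-shell} comes from the band-limited test kernel rather than from $\mathcal H$ itself.
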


\begin{proof}
The frozen-in identity gives, in Eulerian variables,
\[
  h(t,x)=\eta\,DY(t,\Phi^{-1}(t,x))e.
\]
By \cref{prop:uniform-Lagrangian} and the analytic inverse function theorem on the small deformation ball, the quotient $\mathcal H=h/\eta$ is analytic in $(\delta,\zeta)$, including at $\eta=0$.

Substituting $h=\eta\mathcal H$ into \eqref{eq:auxiliary-system} gives
\begin{equation}
\label{eq:u-H-system}
\left\{
\begin{aligned}
  &\partial_tu-\nu\Delta u+\Pcal\diver(u\otimes u)
     =\zeta\Pcal\diver\big(e\otimes\mathcal H+\mathcal H\otimes e
       +\mathcal H\otimes\mathcal H\big),\\
  &\partial_t\mathcal H+(u\cdot\nabla)\mathcal H
     =(e\cdot\nabla)u+(\mathcal H\cdot\nabla)u,\\
  &(u,\mathcal H)|_{t=0}=(\delta W,0).
\end{aligned}
\right.
\end{equation}
At $\zeta=0$, expand
\[
  u=\delta V^{(1)}+\delta^2V^{(2)}+O(\delta^3),
  \qquad
  \mathcal H=\delta\mathcal H^{(1)}+\delta^2\mathcal H^{(2)}+O(\delta^3).
\]
The first two velocity coefficients are exactly \eqref{eq:V1}--\eqref{eq:V2}. Comparing powers of $\delta$ in the second equation of \eqref{eq:u-H-system} gives
\[
  \mathcal H^{(1)}(t)=\int_0^t(e\cdot\nabla)V^{(1)}(s)\dd s=H^{(1)}(t),
\]
and
\[
  \mathcal H^{(2)}(t)=\int_0^t\Big[(e\cdot\nabla)V^{(2)}
  +(H^{(1)}\cdot\nabla)V^{(1)}-(V^{(1)}\cdot\nabla)H^{(1)}\Big](s)\dd s
  =H^{(2)}(t).
\]
This is equivalent to \eqref{eq:first-second-coeff}.
\end{proof}

\subsection{The nonlinear building-block estimate}

\begin{proposition}[Single-shell magnetic lower bound]
\label{prop:single-shell}
There exist $\varepsilon_1,c_1,C_1>0$, independent of $k$ and $j\in J_k$, such that whenever $0<\eta\le\delta\le\varepsilon_1$, problem \eqref{eq:auxiliary-system} has a smooth solution on $[0,T_K]$ and
\begin{equation}
\label{eq:single-shell-lower}
  \abs{\Lambda_{K,\lambda}(h(T_K))}
  \ge c_1\eta\delta^2-C_1(\eta\delta^3+\eta^3).
\end{equation}
Equivalently,
\begin{equation}
\label{eq:single-shell-remainder}
  \abs{\Lambda_{K,\lambda}\big(h(T_K)-\eta\delta H^{(1)}(T_K)
  -\eta\delta^2H^{(2)}(T_K)\big)}
  \le C_1(\eta\delta^3+\eta^3).
\end{equation}
\end{proposition}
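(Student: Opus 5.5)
The plan is to expand $h/\eta=\mathcal H(\delta,\zeta)$ analytically and to test the resulting series against $\Lambda_{K,\lambda}$ term by term. Existence of a smooth solution on $[0,T_K]\subset[0,1]$ comes directly from \cref{prop:uniform-Lagrangian}, since $|\delta|+|\zeta|\le 2\varepsilon_1<r_*$ once $\varepsilon_1$ is small, with $M_W$ given by \eqref{eq:W-critical}. By \cref{lem:coefficient-identification} we have
\[
 \mathcal H=\delta H^{(1)}+\delta^2H^{(2)}+\sum_{r\ge3}\delta^r\mathcal H_{r,0}+\sum_{s\ge1}\sum_{r\ge1}\delta^r\zeta^s\mathcal H_{r,s}.
\]
There is no $\delta^0$ term, because $\mathcal H\equiv0$ when $\delta=0$. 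After multiplying by $\eta$, the remainder splits into two parts. The part with $r\ge3$, $s=0$ is $O(\eta\delta^3)$. The part with $s\ge1$ is $O(\eta\zeta\delta)=O(\eta^3\delta)\le O(\eta^3)$. So \eqref{eq:single-shell-remainder} reduces to a uniform bound $|\Lambda_{K,\lambda}(\mathcal H_{r,s}(T_K))|\le C_2^{N}$ with $C_2$ independent of $K,\lambda$. Then \eqref{eq:single-shell-lower} follows from \eqref{eq:single-shell-remainder} together with \eqref{eq:H1-annihilate} and \eqref{eq:H2-lower}.

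To evaluate the functional I will work in Lagrangian variables rather than expanding $h$ in Eulerian form. From the frozen-in formula, $h(T,x)=\eta\,DY(T,\Phi^{-1}(T,x))e$. Changing variables with $\det F=1$ gives
\[
\Lambda_{K,\lambda}(h(T))=\eta\,\Re\int (DY(T,a)e)\cdot\overline{\psi(a+Y(T,a))}\,\dd a,
\]
where $\psi=\psi_{K,\lambda}$ is the kernel from \cref{lem:test-functional}. Next I Taylor expand $\psi(a+Y)=\sum_m \frac1{m!}D^m\psi(a)[Y^{\otimes m}]$. This series is summable by \cref{cor:test-Taylor}, since $\lambda\|Y\|_{L^\infty}\le C\lambda\|Y\|_{\cX^0}\le C\delta$ by \cref{lem:target-displacement}. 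Each term is bounded by
\[
\|DY\|_{L^\infty}\,\frac1{m!}\|D^m\psi\|_{L^1}\,\|Y\|_{L^\infty}^m\le C\delta\, C_0(C_1C\delta)^m.
\]
This gives a uniformly convergent representation of $\Lambda(h)/\eta$ as a function of $(\delta,\zeta)$, analytic on a bidisc whose radius does not depend on $K,\lambda$. The bidisc is obtained by complexifying, as in \cref{prop:uniform-Lagrangian}, and applying \cref{lem:target-displacement} to complex parameters.

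Once this function is known to be bounded by an absolute constant on a fixed bidisc, the Cauchy estimates on its Taylor coefficients give $|\Lambda(\mathcal H_{r,s}(T_K))|\le C R^{-N}$ with a fixed radius $R$. This is exactly the uniform coefficient bound needed above. The coefficients must be identified with those of the Eulerian expansion. The functional applied to $h/\eta$ is one analytic function, so its coefficients are $\Lambda(\mathcal H_{r,s})$, and by \cref{lem:coefficient-identification} the orders $(1,0)$ and $(2,0)$ are $\Lambda(H^{(1)})$ and $\Lambda(H^{(2)})$. Subtracting these leading terms leaves a remainder bounded by $C(|\delta|^3+|\zeta||\delta|)$ on a smaller bidisc. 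Multiplying by $\eta$ yields \eqref{eq:single-shell-remainder} with $C_1$ independent of $k$ and $j$.

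\textbf{Main obstacle.} The hard step is obtaining the uniform bound on the complexified bidisc. Specifically, the estimate $\lambda\|Y\|_{\cX^0}\le C|\delta|$ of \cref{lem:target-displacement} must hold for complex $(\delta,\zeta)$ with $|\delta|,|\zeta|$ small, and not only under $0<\eta\le\delta$. I would first check that the proof of \cref{lem:target-displacement} is already coefficientwise and uses only $|\delta|+|\zeta|$. I expect it is: the central, noncentral and high-order bounds all sum absolute values of coefficients. Here one must use $\lambda\ge K^{1/4}$ to beat the factor $K$ in the tail, which is why the target scale $\lambda Y$, rather than $KY$, is essential. A secondary point is the $L^\infty$ control of $DY$ and $Y$ through $\cX^0$, which comes from \cref{prop:uniform-Lagrangian}.
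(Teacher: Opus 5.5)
Your proposal is correct and is essentially the paper's own argument: you use the Lagrangian representation of $\Lambda_{K,\lambda}(h(T_K))$ through the frozen-in formula and volume preservation, sum the Taylor series of the translated kernel via \cref{cor:test-Taylor} and the target-scale bound $\lambda\norm[\cX^0]{Y}\lesssim\delta$ of \cref{lem:target-displacement}, obtain analyticity on a bidisc independent of $K$ and $\lambda$, apply a Cauchy estimate to the remainder, and identify the first two coefficients through \cref{lem:coefficient-identification} and \cref{prop:cubic-output}. The point you single out, that the coefficientwise proof of \cref{lem:target-displacement} uses only $|\delta|+|\zeta|$ and therefore survives complexification, is exactly what the paper uses implicitly; when you complexify, pair with the holomorphic expression $\overline{\psi}(a+Y)$ and postpone the real part to the end, since $\Re\int(DYe)\cdot\overline{\psi(a+Y)}$ is not holomorphic in complex parameters.
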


\begin{proof}
Let $\psi_{K,\lambda}$ denote the kernel representing $\Lambda_{K,\lambda}$. By \eqref{eq:frozen-in}, volume preservation, and $\widehat\psi_{K,\lambda}(0)=0$,
\begin{align}
\label{eq:Lambda-Lagrangian}
  \Lambda_{K,\lambda}(h(T_K))
  &=\eta\int_{\R^d}\big((I+DY(T_K,a))e-e\big)
     \cdot\overline{\psi_{K,\lambda}(a+Y(T_K,a))}\dd a\\
  &=\eta\int_{\R^d}(I+DY(T_K,a))e
     \cdot\overline{\psi_{K,\lambda}(a+Y(T_K,a))}\dd a.\nonumber
\end{align}
The constant term vanishes after the volume-preserving change of variables $x=\Phi(T_K,a)$.

By \cref{cor:test-Taylor,lem:target-displacement}, the Taylor series in \eqref{eq:Lambda-Lagrangian} is absolutely summable with a majorant independent of $K$ and $\lambda$. Together with \cref{prop:uniform-Lagrangian}, this shows that
\begin{equation}
\label{eq:Phi-analytic}
  \eta^{-1}\Lambda_{K,\lambda}(h(T_K))
  =\Phi_{K,\lambda}(\delta,\zeta),
  \qquad \zeta=\eta^2,
\end{equation}
where $\Phi_{K,\lambda}$ is analytic in a fixed bidisc and has a coefficient majorant independent of $K,\lambda$. Since the solution is stationary when $\delta=0$, $\Phi_{K,\lambda}(0,\zeta)=0$. Cauchy's estimate therefore gives
\begin{equation}
\label{eq:Cauchy-remainder}
 \abs{\Phi_{K,\lambda}(\delta,\zeta)
 -\delta\partial_\delta\Phi_{K,\lambda}(0,0)
 -\frac{\delta^2}{2}\partial_\delta^2\Phi_{K,\lambda}(0,0)}
 \le C(\delta^3+\zeta).
\end{equation}
By \cref{lem:coefficient-identification}, the first two coefficients are the pairings with $H^{(1)}$ and $H^{(2)}$. Applying \cref{prop:cubic-output} to \eqref{eq:Cauchy-remainder} gives
\[
  \abs{\Lambda_{K,\lambda}(h(T_K))}
  \ge c_0\eta\delta^2-C\eta(\delta^3+\eta^2),
\]
which is \eqref{eq:single-shell-lower} after adjusting constants.
\end{proof}

\begin{remark}[Why one directional functional is sufficient]
A lower bound for $\norm[\dot B^0_{\infty,1}]{b}$ requires only one bounded functional in each shell:
\[
  \norm[L^\infty]{\lp_jb}
  \ge C^{-1}\abs{\Lambda_{K,\lambda}(b)}.
\]
No estimate of the sum of the full absolute remainder blocks is needed. Each target shell may be tested at its own spatial point and output direction.
\end{remark}

\section{Localization of the background and spatial decoupling}
\label{sec:localization}

The preceding construction uses a constant magnetic field. We now replace it by a compactly supported solenoidal plateau and place the building blocks far enough apart that all mutual interactions are perturbative.

\subsection{A compact solenoidal plateau}

\begin{lemma}[Localized constant vector]
\label{lem:plateau}
For every $e\in\mathbb S^{d-1}$ there is $G\in C_c^\infty(\R^d;\R^d)$ such that
\begin{equation}
\label{eq:G-plateau}
  \diver G=0,
  \qquad G=e\quad\text{on }B(0,2).
\end{equation}
For a dyadic number $R\ge1$, set $G_R(x)=G(x/R)$. Then
\begin{align}
\label{eq:G-Besov}
  \norm[\dot B^0_{\infty,1}]{G_R}&=\norm[\dot B^0_{\infty,1}]{G},\\
\label{eq:G-derivatives}
  \norm[L^\infty]{\nabla^mG_R}&\le C_mR^{-m},\\
\label{eq:G-high-block}
  \norm[L^\infty]{\lp_jG_R}&\le C_N(2^jR)^{-N}
  \qquad\text{when }2^jR\ge1.
\end{align}
\end{lemma}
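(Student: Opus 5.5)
We construct $G$ from a stream-function or antisymmetric potential, so that the divergence vanishes by algebra alone, and then read off the three scaling estimates from dilation properties of the dyadic blocks.

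\textbf{Construction.} Take a cutoff $\theta\in C_c^\infty(\R^d)$ with $\theta=1$ on $B(0,3)$. Form the antisymmetric matrix field
\[
  \Omega_{i\ell}(x)=\tfrac12\theta(x)\big(x_\ell e_i-x_i e_\ell\big),
\]
and set $G_i=\partial_\ell\Omega_{i\ell}$. Antisymmetry gives $\diver G=\partial_i\partial_\ell\Omega_{i\ell}=0$, and $G$ is smooth and compactly supported. On $B(0,3)$ we have $\theta\equiv1$, and there
\[
  \partial_\ell\big(\tfrac12(x_\ell e_i-x_ie_\ell)\big)=\tfrac12(d\,e_i-e_i)=\tfrac{d-1}{2}e_i.
\]
Replacing $\Omega$ by $\tfrac{2}{d-1}\Omega$, which is allowed since $d\ge2$, gives $G=e$ on $B(0,2)$. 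This proves \eqref{eq:G-plateau}.

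\textbf{Scaling estimates.} Estimate \eqref{eq:G-derivatives} is immediate from the chain rule, with $C_m=\norm[L^\infty]{\nabla^mG}$.

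For \eqref{eq:G-Besov}, write $R=2^{n}$ with $n\ge0$. Homogeneous blocks commute with dilations in the sense that
\[
  \lp_j(f(\cdot/R))=(\lp_{j+n}f)(\cdot/R),
\]
which can be checked directly on the Fourier side from $\widehat{G_R}(\xi)=R^d\widehat G(R\xi)$. The $L^\infty$ norm is dilation invariant, so reindexing the sum in \eqref{eq:hom-Besov} at $s=0$ gives equality. This is exactly where the dyadic assumption on $R$ is used.

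Membership $G\in\dot B^0_{\infty,1}$ needs a short check. For $j\ge0$, the usual Bernstein bound $\norm[L^\infty]{\lp_jG}\le C2^{-jM}\norm[L^\infty]{\nabla^MG}$ gives summability of the high blocks. For $j<0$, note that $G=\diver\Omega$ has $\widehat G(\xi)=i\xi\cdot\widehat\Omega(\xi)$, so
\[
  \norm[L^\infty]{\lp_jG}\le C2^{j}\norm[L^1]{\Omega}
\]
by Young's inequality with the kernel $2^{jd}\check\varphi(2^j\cdot)$, which is uniformly bounded in $L^1$. This bound is summable over $j<0$. The vanishing at frequency zero coming from the divergence form is the only point that needs care, and it is also what places $G$ in the realization \eqref{eq:Sh-prime}, since $\low_jG\to0$.

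For \eqref{eq:G-high-block}, combine the dilation identity with the high-frequency bound just established:
\[
  \norm[L^\infty]{\lp_jG_R}=\norm[L^\infty]{\lp_{j+n}G}\le C_N2^{-(j+n)N}\norm[L^\infty]{\nabla^NG}=C_N(2^jR)^{-N},
\]
valid whenever $j+n\ge0$, that is, $2^jR\ge1$.

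No step is a genuine obstacle. The one subtlety is keeping the low-frequency blocks of $G$ summable, and the divergence representation handles it.
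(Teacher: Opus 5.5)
Your proof is correct and follows the paper's argument: the same antisymmetric potential $\frac{\chi_0}{d-1}(e_ix_\ell-e_\ell x_i)$ yields $\diver G=0$ and $G=e$ where the cutoff equals one. The three estimates then come from the dilation identity $\lp_j(G(\cdot/R))=(\lp_{j+n}G)(\cdot/R)$ for $R=2^n$ together with the rapid decay of $\widehat G$, which you make explicit through Bernstein's inequality.

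One small slip in your membership check: Young's inequality with the $L^1$-bounded kernel gives $\norm[L^\infty]{\lp_jG}\le C2^j\norm[L^\infty]{\Omega}$, not $C2^j\norm[L^1]{\Omega}$. The latter also holds for $j<0$, but only by using the $L^\infty$ norm of the kernel. In fact compact support alone gives $\norm[L^\infty]{\lp_jG}\le C2^{jd}\norm[L^1]{G}$, so the divergence structure is not needed for low-frequency summability.
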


\begin{proof}
Choose $\chi_0\in C_c^\infty(\R^d)$ with $\chi_0=1$ on $B(0,2)$ and define the antisymmetric tensor
\[
  M_{i\ell}(x)=\frac{\chi_0(x)}{d-1}(e_ix_\ell-e_\ell x_i).
\]
Set $G_i=\sum_\ell\partial_\ell M_{i\ell}$. Antisymmetry gives $\diver G=0$, and on the region where $\chi_0=1$,
\[
  \sum_\ell\partial_\ell(e_ix_\ell-e_\ell x_i)=(d-1)e_i.
\]
Thus $G=e$ there. The remaining estimates follow from dyadic scaling and rapid Fourier decay.
\end{proof}

\subsection{Sparse translations and far-field products}

For $y\in\R^d$, write $\trans y f(x)=f(x-y)$.

\begin{lemma}[Sparse sum at one carrier scale]
\label{lem:sparse-sum}
For every fixed $k$ and every prescribed separation length $L>0$, the points $y_{k,j}$, $j\in J_k$, may be chosen with pairwise distances at least $L$ so that
\begin{equation}
\label{eq:sparse-weight}
  \sup_{x\in\R^d}\sum_{j\in J_k}\ang{x-y_{k,j}}^{-d-2}\le C
\end{equation}
and
\begin{equation}
\label{eq:sparse-Besov}
  \norm[\dot B^{-1}_{\infty,1}]{\sum_{j\in J_k}\trans{y_{k,j}}W_{K,2^j}}
  \le C,
\end{equation}
where $C$ is independent of $k$ and $L$.
\end{lemma}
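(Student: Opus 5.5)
The plan is to place the points along a line with rapidly growing spacing, for instance $y_{k,j}=jL_0e_1$ with $L_0\ge L$. A better choice is $y_{k,j}=2^{j}L_0e_1$, or any sequence whose pairwise gaps are at least $L$ and grow at least linearly in the index. The constants should then depend only on the summability of $\sum_n\ang{nL}^{-d-2}$ over integers $n$. This sum is bounded by $C(1+L^{-1})$ in general, and by an absolute constant once $L\ge1$. We may assume $L\ge1$, since enlarging the separation only strengthens the first requirement.

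For \eqref{eq:sparse-weight}, fix $x$ and let $j_0$ be the index minimizing $|x-y_{k,j}|$. For every $j\ne j_0$, the triangle inequality together with the separation $\ge L|j-j_0|/2$ gives $|x-y_{k,j}|\ge L|j-j_0|/2$; along a line this is immediate from the ordering of the points. Hence
\[
\sum_{j}\ang{x-y_{k,j}}^{-d-2}\le 1+2\sum_{n\ge1}\ang{Ln/2}^{-d-2}\le C,
\]
which is uniform in $k$ and in $L\ge1$.

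For \eqref{eq:sparse-Besov}, use \eqref{eq:W-support}: every $W_{K,2^j}$ has Fourier support in $\{K/2\le|\xi|\le2K\}$, and translation preserves Fourier support. Therefore only the blocks $\lp_\ell$ with $|\ell-k|\le2$ are nonzero on the sum. This leaves
\[
\norm[\dot B^{-1}_{\infty,1}]{\textstyle\sum_j\trans{y_{k,j}}W_{K,2^j}}\le C K^{-1}\sup_{|\ell-k|\le2}\norm[L^\infty]{\lp_\ell\textstyle\sum_j\trans{y_{k,j}}W_{K,2^j}}.
\]
Each $\lp_\ell$ has an $L^1$ kernel of uniform norm and commutes with translations. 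The task therefore reduces to bounding $\norm[L^\infty]{\sum_j\trans{y_{k,j}}W_{K,2^j}}$. Applying \eqref{eq:W-decay} with $m=0$ and $N=d+2$, we get
\[
\Big|\sum_j W_{K,2^j}(x-y_{k,j})\Big|\le C K\sum_j\ang{x-y_{k,j}}^{-d-2}\le CK
\]
by \eqref{eq:sparse-weight}. Multiplying by $K^{-1}$ gives a constant independent of $k$ and $L$.

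The only delicate point is that the constant in \eqref{eq:W-decay} must be uniform in $j\in J_k$. This is asserted in \cref{lem:input-bounds}, but it should be double-checked. $\widehat W_{K,\lambda}$ is $K$ times a sum of four modulated bumps $\Pcal(\xi)a_*\rho(\xi\mp\xi_{1,2})$. On the bump supports, $|\xi|\simeq K$, so every $\xi$-derivative of $\Pcal(\xi)$ is bounded by a fixed constant, and in fact decays. Integrating by parts $N$ times in $\xi$ therefore yields $|W_{K,\lambda}(x)|\le C_NK\ang{x}^{-N}$, with $C_N$ depending only on $\rho$ and $N$. With that uniformity in hand, the two bounds above close, and no interaction between shells enters, because the estimate is purely linear.
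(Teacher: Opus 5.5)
Your proposal is correct and follows essentially the same route as the paper: the paper also places the points on the $e_1$-axis at consecutive integer multiples of $\max\{L,1\}$ and bounds the resulting lattice sum uniformly to obtain \eqref{eq:sparse-weight}. It then uses the carrier-annulus Fourier support together with \eqref{eq:W-decay} and \eqref{eq:sparse-weight} to bound each of the finitely many active blocks by $O(K)$ before multiplying by $2^{-\ell}\simeq K^{-1}$, exactly as you do, and your uniform-in-$j$ check of the $m=0$ case of \eqref{eq:W-decay} is sound (the geometric spacing $2^{j}L_0e_1$ is unnecessary, since uniform spacing already suffices).
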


\begin{proof}
Let $\ell=\max\{L,1\}$, enumerate $J_k$, choose distinct consecutive integers $r_j$, and set $y_{k,j}=\ell r_je_1$. The lattice sum in \eqref{eq:sparse-weight} is uniformly bounded. The Fourier support of the sum in \eqref{eq:sparse-Besov} remains in the carrier annulus, so only finitely many blocks with $2^n\simeq K$ contribute. By \eqref{eq:W-decay} and \eqref{eq:sparse-weight}, the $L^\infty$ norm of each such block is $O(K)$. Multiplication by $2^{-n}\simeq K^{-1}$ gives \eqref{eq:sparse-Besov}.
\end{proof}

The next lemma replaces the unsupported assertion that nonlinear profiles remain Schwartz at positive times.

\begin{lemma}[Uniform decay of products under translation]
\label{lem:far-field-products}
Let $T>0$ and let $f,g\in C([0,T];H^m(\R^d))$ for every $m\ge0$. Then, for every $r\ge0$,
\begin{equation}
\label{eq:translated-product}
  \sup_{0\le t\le T}\norm[B^r_{\infty,1}]{f(t)\,\trans y g(t)}
  \longrightarrow0
  \qquad\text{as }|y|\to\infty.
\end{equation}
The same statement holds for finite sums of derivatives and tensor products. Moreover, every such trajectory and all of its derivatives vanish uniformly at spatial infinity.
\end{lemma}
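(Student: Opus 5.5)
The plan is to reduce the product estimate to the decay at spatial infinity of $H^\infty$ trajectories, and then use compactness in time.

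\emph{Step 1: pointwise vanishing at infinity.} Fix $t$. For every multi-index $\alpha$ we have $\partial^\alpha f(t)\in H^{m}$ for all $m$. In particular $\partial^\alpha f(t)\in H^{s}$ with $s>d/2$, which gives $\widehat{\partial^\alpha f(t)}\in L^1$. By Riemann--Lebesgue, $\partial^\alpha f(t)$ is then continuous and tends to $0$ at infinity.

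\emph{Step 2: uniformity in time.} By Sobolev embedding, $H^{m+s}\hookrightarrow C^m_0$ continuously for $s>d/2$. Hence $t\mapsto f(t)$ is continuous into $C^m_0$, and its image $\{f(t)\}_{t\in[0,T]}$ is compact there. A compact subset of $C_0$ has uniformly small tails: given $\epsilon>0$, cover it by a finite $\epsilon$-net, choose a common radius $R$ beyond which every net element is below $\epsilon$, and conclude that $\sup_t\sup_{|x|\ge R}|\partial^\alpha f(t,x)|\le 2\epsilon$. This gives the final sentence of the lemma, for $f$, $g$, and for their finite sums, derivatives and tensor products.

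\emph{Step 3: the product in $B^r_{\infty,1}$.} Choose an integer $m>r$ and use $\norm[B^r_{\infty,1}]{\phi}\le C\norm[C^m_b]{\phi}$. This holds because $\norm[L^\infty]{\Delta_j\phi}\le C2^{-jm}\norm[C^m_b]{\phi}$ for $j\ge0$, and the low block is bounded by $\norm[L^\infty]{\phi}$. By Leibniz, $\partial^\alpha(f\,\trans y g)$ is a finite sum of terms $\partial^\beta f(t,x)\,\partial^{\alpha-\beta}g(t,x-y)$. For such a term:
\begin{itemize}
\item when $|x|\ge|y|/2$, the first factor is bounded by $\sup_t\sup_{|x|\ge|y|/2}|\partial^\beta f|$ and the second by $\sup_t\norm[L^\infty]{\partial^{\alpha-\beta}g}$;
\item when $|x|<|y|/2$, we have $|x-y|>|y|/2$, so the second factor is small by Step 2 and the first is uniformly bounded.
\end{itemize}
Both cases tend to $0$ as $|y|\to\infty$, uniformly in $t\in[0,T]$, which proves \eqref{eq:translated-product}. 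Finite sums of derivatives and tensor products follow in the same way, since every factor lies in $C([0,T];H^\infty)$.

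\emph{Main obstacle.} The only delicate point is the uniformity in $t$, handled in Step 2. Pointwise decay at each fixed time is automatic, but the supremum over $[0,T]$ needs the compactness of the trajectory in $C_0^m$. That compactness comes from the assumed continuity in time into every $H^m$, not merely boundedness, so the hypothesis $C([0,T];H^m)$ is used essentially. Everything else is routine.
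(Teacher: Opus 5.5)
Your proof is correct, but it takes a different route from the paper's. The paper works in $L^2$-based spaces. It fixes $M>r+d/2+2$ and reduces the claim to $\sup_t\norm[H^M]{f(t)\,\trans{y}g(t)}\to0$, using a Sobolev embedding into $B^r_{\infty,1}$. It then expands by Leibniz and splits one factor of each differentiated product into a compactly supported part plus an $L^2$ tail that is small uniformly in time. On the compact part, the translated factor is uniformly small by uniform $C_0$ decay; on the tail, the uniform $L^\infty$ bound of the other factor suffices. The uniform $C_0$ decay is only mentioned at the end, as a byproduct of the same compactness argument.

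You stay in $L^\infty$-based spaces throughout. The elementary embedding $\norm[B^r_{\infty,1}]{\phi}\le C\norm[C^m_b]{\phi}$ for integer $m>r$ lets you measure the product in sup norm. The physical-space dichotomy $|x|\ge|y|/2$ versus $|x-y|>|y|/2$ then needs only the uniform sup-norm decay from your Step 2 and uniform $L^\infty$ bounds, with no $L^2$ tightness. Both proofs get uniformity in $t$ from the same source: compactness of the continuous trajectory.

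Your route is more direct and matches the $B^r_{\infty,1}$ target norm, and it makes the uniform $C_0$ decay the central ingredient rather than an afterthought. The paper's route gives the formally stronger conclusion of uniform-in-time convergence in $H^M$. That extra strength is not needed in \cref{prop:gluing}, which only uses smallness of cross terms in $L^1_TB^{s}_{\infty,1}$ and $L^1_TB^{s+1}_{\infty,1}$.
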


\begin{proof}
Fix an integer $M>r+d/2+2$. It is enough to prove convergence in $H^M$, since $H^{M'}\hookrightarrow B^r_{\infty,1}$ for sufficiently large $M'$. Expand derivatives by Leibniz' rule. Every factor derivative belongs to $L^2\cap C_0$, uniformly along the compact trajectories in the relevant Sobolev spaces. Split one factor into a compact part and an $L^2$ tail, uniformly in time. On the compact part, the translated second factor converges uniformly to zero; on the tail, use the uniform $L^\infty$ bound of the second factor. This proves convergence of each differentiated product in $L^2$, uniformly in time. The uniform $C_0$ decay follows by the same compactness argument and Sobolev embedding.
\end{proof}

\subsection{A posteriori stability}

We shall use the following high-regularity stability statement. Its constants may depend on the fixed smooth approximate solution; no uniform high-frequency lifespan is required.

\begin{proposition}[Stability of a smooth approximate solution]
\label{prop:stability}
Let $s>1$ be noninteger and $T>0$. Suppose $(u^a,b^a)$ is a smooth divergence-free pair on $[0,T]\times\R^d$ satisfying the projected equations with residuals $(R_u,R_b)$ and
\begin{equation}
\label{eq:approx-bound}
  \norm[\widetilde L^\infty_TB^{s-1}_{\infty,1}]{u^a}
  +\nu\norm[L^1_TB^{s+1}_{\infty,1}]{u^a}
  +\norm[\widetilde L^\infty_TB^{s+1}_{\infty,1}]{b^a}
  \le M.
\end{equation}
There exist $\rho_*=
ho_*(M,T,\nu)>0$ and $C_a=C_a(M,T,\nu)<\infty$ such that, if
\begin{equation}
\label{eq:residual-small}
  \norm[L^1_TB^{s-1}_{\infty,1}]{R_u}
  +\norm[L^1_TB^{s}_{\infty,1}]{R_b}<\rho_*,
\end{equation}
then the exact solution with initial data $(u^a(0),b^a(0))$ exists on $[0,T]$. With $w=u-u^a$ and $z=b-b^a$,
\begin{align}
\label{eq:stability-bound}
 &\norm[\widetilde L^\infty_TB^{s-1}_{\infty,1}]{w}
  +\nu\norm[L^1_TB^{s+1}_{\infty,1}]{w}
  +\norm[\widetilde L^\infty_TB^{s}_{\infty,1}]{z}\\
 &\qquad\le C_a\left(
  \norm[L^1_TB^{s-1}_{\infty,1}]{R_u}
  +\norm[L^1_TB^{s}_{\infty,1}]{R_b}\right).\nonumber
\end{align}
In particular, the right-hand side controls
\[
  \sup_{0\le t\le T}\big(\norm[L^\infty]{w(t)}+\norm[L^\infty]{z(t)}\big).
\]
\end{proposition}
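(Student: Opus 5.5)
We prove \cref{prop:stability} by a standard energy-type estimate for the perturbation $(w,z)$ in the Chemin--Lerner spaces $\widetilde L^\rho_TB^{\sigma}_{\infty,1}$, closed by a continuity argument. The noninteger index $s>1$ is only there so that Hölder/Zygmund-type commutator bounds hold with no logarithmic loss. We first use classical high-regularity local theory: the data $(u^a(0),b^a(0))$ are smooth and decaying, so the exact solution exists on a maximal interval $[0,T^*)$ and is smooth there. It can be continued as long as $\norm[L^1_tB^{s+1}_{\infty,1}]{u}$ and $\norm[L^\infty_tB^s_{\infty,1}]{b}$ stay bounded, which is the Lipschitz-type criterion for the transport--stretching equation. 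It therefore suffices to prove \eqref{eq:stability-bound} on every $[0,T']$ with $T'<\min(T,T^*)$, under the bootstrap hypothesis that the left-hand side is at most $1$.

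\textbf{Step 1: the equations for the perturbation.} Subtracting the projected approximate system from the exact one gives
\[
  \partial_tw-\nu\Delta w=-\Pcal\diver\big(u^a\otimes w+w\otimes u^a+w\otimes w\big)
  +\Pcal\diver\big(b^a\otimes z+z\otimes b^a+z\otimes z\big)-R_u,
\]
\[
  \partial_tz+(u\cdot\nabla)z=-(w\cdot\nabla)b^a+(b^a\cdot\nabla)w+(z\cdot\nabla)u^a+(z\cdot\nabla)w-R_b.
\]

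\textbf{Step 2: the velocity.} We estimate $w$ with the heat-semigroup maximal regularity estimate in $\widetilde L^\infty_TB^{s-1}_{\infty,1}\cap L^1_TB^{s+1}_{\infty,1}$. This only requires bounding the forcing in $L^1_TB^{s-1}_{\infty,1}$. Each product is handled with Bony's paraproduct and $B^{s}_{\infty,1}$ being an algebra, giving for example
\[
  \norm[L^1_tB^{s}_{\infty,1}]{u^a\otimes w}\lesssim\int_0^t\norm[B^{s}_{\infty,1}]{u^a}\norm[B^{s}_{\infty,1}]{w}\dd\tau.
\]
We then interpolate $B^s_{\infty,1}$ between $B^{s-1}_{\infty,1}$ and $B^{s+1}_{\infty,1}$ and absorb a small multiple of $\nu\norm[L^1B^{s+1}_{\infty,1}]{w}$. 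The magnetic terms contribute $\int_0^t(M+\norm[B^s_{\infty,1}]{z})\norm[B^s_{\infty,1}]{z}\dd\tau$.

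\textbf{Step 3: the magnetic field, which is the main obstacle.} The equation for $z$ is purely transport with no smoothing. I would use the standard $B^{s}_{\infty,1}$ transport estimate with commutator $[\lp_j,u\cdot\nabla]$, which gives
\[
  \norm[B^s_{\infty,1}]{z(t)}\le e^{C\int_0^t\norm[B^{s+1}_{\infty,1}]{u}}\int_0^t\big(\norm[B^s_{\infty,1}]{\text{RHS}}\big)\dd\tau .
\]
Here $\norm[L^1B^{s+1}_{\infty,1}]{u}\le M/\nu+1$ by the bootstrap. The delicate term is $(b^a\cdot\nabla)w$, which is needed in $B^s_{\infty,1}$ and costs a full derivative of $w$. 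It is bounded by $\norm[B^{s}_{\infty,1}]{b^a}\norm[B^{s+1}_{\infty,1}]{w}$, and its time integral is controlled by $M\norm[L^1_tB^{s+1}_{\infty,1}]{w}$. This is not small a priori, so the two estimates must be coupled carefully rather than simply absorbed.

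I would close this loop by working on short subintervals $[t_n,t_{n+1}]$ on which $\int\norm[B^{s+1}_{\infty,1}]{u^a}$ and the other approximate-solution norms are small. Equivalently, one can use a weighted norm $e^{-\kappa\int_0^t(\ldots)}$. On each subinterval the coupled quantity $X(t)=\norm[\widetilde L^\infty B^{s-1}]{w}+\nu\norm[L^1B^{s+1}]{w}+\norm[L^\infty B^s]{z}$ satisfies $X\le C\,X(t_n)+C\,\text{residual}+\tfrac12X+CX^2$. The term $(w\cdot\nabla)b^a$ uses the extra derivative on $b^a$ assumed in \eqref{eq:approx-bound}, and the term $(z\cdot\nabla)u^a$ is Gronwall-type. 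The number of subintervals depends only on $(M,T,\nu)$, so iterating gives $X\le C_a(\text{residual})+C_aX^2$. Choosing $\rho_*$ small then closes the bootstrap, which shows $T^*>T$ and yields \eqref{eq:stability-bound}. The final $L^\infty$ claim follows from $B^{s-1}_{\infty,1},B^s_{\infty,1}\hookrightarrow L^\infty$ since $s>1$.
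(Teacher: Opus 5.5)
Your proposal follows the paper's appendix proof essentially step by step. It uses the same perturbation system, heat maximal regularity for $w$ in $\widetilde L^\infty B^{s-1}_{\infty,1}\cap L^1B^{s+1}_{\infty,1}$, the $B^s_{\infty,1}$ transport estimate for $z$, a bootstrap on finitely many short subintervals, and iteration over the partition. The one point to state precisely is how the $w$--$z$ loop closes: the coefficient $M/\nu$ in front of $\nu\|w\|_{L^1(I;B^{s+1}_{\infty,1})}$ in the $z$-estimate does not shrink with $|I|$, so your inequality with $\tfrac12X$ fails for the plain sum. It holds only after substituting the $z$-bound into the term $|I|M\sup_I\|z\|_{B^s_{\infty,1}}$ of the velocity estimate, or after using a weighted combination $E_u+\epsilon E_b$ with $\epsilon\sim\nu/M$. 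Either way one needs $|I|M^2/\nu$ small, which is exactly the paper's partition condition $|I_m|A_b(I_m)^2/\nu\le\theta$.
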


The proof, including the coupled interval-by-interval bootstrap, is given in \ref{app:stability}.

\subsection{Gluing separated building blocks}

For every $j\in J_k$, let $(u_{k,j},h_{k,j})$ be the solution from \cref{prop:single-shell}, with target $\lambda=2^j$.

\begin{proposition}[Separated-profile gluing]
\label{prop:gluing}
Fix $k$, $0<\eta\le\delta\le\varepsilon_1$, and $\sigma>0$. There are centers $y_{k,j}$ and a dyadic radius $R_k$ such that the exact solution of \eqref{eq:MHD} with data
\begin{equation}
\label{eq:glued-data}
  u_{0,k}=\delta\sum_{j\in J_k}\trans{y_{k,j}}W_{K,2^j},
  \qquad
  b_{0,k}=\eta G_{R_k},
\end{equation}
exists on $[0,T_K]$. If
\begin{equation}
\label{eq:approx-sum}
  u_k^a=\sum_{j\in J_k}\trans{y_{k,j}}u_{k,j},
  \qquad
  b_k^a=\eta G_{R_k}+\sum_{j\in J_k}\trans{y_{k,j}}h_{k,j},
\end{equation}
then
\begin{equation}
\label{eq:gluing-Linf}
  \sup_{0\le t\le T_K}
  \left(\norm[L^\infty]{u_k(t)-u_k^a(t)}
  +\norm[L^\infty]{b_k(t)-b_k^a(t)}\right)<\sigma.
\end{equation}
Moreover,
\begin{equation}
\label{eq:glued-input-bound}
  \norm[\dot B^{-1}_{\infty,1}]{u_{0,k}}
  +\norm[\dot B^0_{\infty,1}]{b_{0,k}}
  \le C(\delta+\eta),
\end{equation}
with $C$ independent of $k$.
\end{proposition}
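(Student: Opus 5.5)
The plan is to treat $(u_k^a,b_k^a)$ as a smooth approximate solution of \eqref{eq:MHD} on $[0,T_K]$ and apply \cref{prop:stability}. First I fix the geometry. For each $j\in J_k$, \cref{prop:single-shell} gives a classical solution $(u_{k,j},\eta e+h_{k,j})$ of \eqref{eq:auxiliary-system} on $[0,T_K]$. Each $u_{k,j}$ and $h_{k,j}$ lies in $C([0,T_K];H^m)$ for all $m$, since $W_{K,\lambda}\in\Sspace$ and $h_{k,j}(0)=0$. There are finitely many blocks, so a common noninteger $s>1$ gives a finite bound $M$ in \eqref{eq:approx-bound} for $(u_k^a,b_k^a)$. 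This bound may depend on $k$, but the radius $R_k\ge1$ only improves the $G_{R_k}$ contributions by \eqref{eq:G-derivatives}, so $M$ does not deteriorate as $R_k$ and the separation grow. We fix $M$ and then $\rho_*(M,T_K,\nu)$, $C_a$, before choosing the separation $L$ and $R_k$.

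Next I compute the residuals. Write $b^a_k=\eta G_{R_k}+\sum_j\trans{y_j}h_{k,j}$. Each translated block solves its equation with the constant field $\eta e$. The residual therefore splits into two kinds of terms.
\begin{enumerate}
\item \emph{Cross terms} such as $\Pcal\diver(\trans{y_i}u_{k,i}\otimes\trans{y_j}u_{k,j})$, $(\trans{y_i}u_{k,i}\cdot\nabla)\trans{y_j}h_{k,j}$ and $(\trans{y_i}h_{k,i}\cdot\nabla)\trans{y_j}u_{k,j}$ for $i\ne j$. By \cref{lem:far-field-products}, with finitely many pairs and all derivatives, these tend to zero in $B^{s}_{\infty,1}$ uniformly on $[0,T_K]$ as $L\to\infty$. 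The Leray projector is a bounded multiplier on the inhomogeneous spaces after a low-frequency split, since its symbol is homogeneous of degree zero and the divergence supplies a derivative.
\item \emph{Background terms}, obtained by replacing $\eta e$ with $\eta G_{R_k}$. These are $\eta((G_{R_k}-e)\cdot\nabla)\trans{y_j}u_{k,j}$, the analogous $h$ terms, the term $\eta(\trans{y_j}u_{k,j}\cdot\nabla)G_{R_k}$, the Lorentz term $\eta^2(G_{R_k}\cdot\nabla)G_{R_k}$, and the cross terms between $G_{R_k}$ and $h$.
\end{enumerate}
I place all centers $y_{k,j}$ inside a ball $B(0,\ell)$, with $\ell$ fixed after $L$ (as in \cref{lem:sparse-sum}), and take $R_k\gg\ell$. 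Then $G_{R_k}=e$ on $B(0,2R_k)$, and $G_{R_k}-e$ is supported outside that ball, where the blocks are small by the uniform decay at infinity in \cref{lem:far-field-products}. The terms involving $\nabla G_{R_k}$ gain $R_k^{-1}$ by \eqref{eq:G-derivatives}. The pure background term $\eta^2(G_{R_k}\cdot\nabla)G_{R_k}$ is $O(R_k^{-1})$ in every $B^{r}_{\infty,1}$, again by scaling. Choosing $L$ and then $R_k$ large makes the residual norm in \eqref{eq:residual-small} less than $\min(\rho_*,\sigma/C_a)$. \cref{prop:stability} then gives existence on $[0,T_K]$ and \eqref{eq:gluing-Linf}.

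The input bound \eqref{eq:glued-input-bound} follows from \cref{lem:sparse-sum} and \eqref{eq:G-Besov}, since the data of \eqref{eq:glued-data} are $u_{0,k}=\delta\sum_j\trans{y_j}W_{K,2^j}$ and $b_{0,k}=\eta G_{R_k}$. One point of bookkeeping matters here. The constant in \eqref{eq:sparse-Besov} is independent of $L$, and \eqref{eq:G-Besov} is independent of $R$, so the constant $C$ does not depend on $k$ even though $L$ and $R_k$ do.

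I expect the main obstacle to be the \textbf{background residual} in the $B^{s-1}$ and $B^{s}$ norms, rather than in $L^\infty$. The terms $\eta((G_{R_k}-e)\cdot\nabla)u_{k,j}$ require decay of high derivatives of the nonlinear profiles at infinity in a Besov norm, not only pointwise. I would handle this as in the proof of \cref{lem:far-field-products}. Writing $G_{R_k}-e=(G-e)(\cdot/R_k)$, the factor $G-e$ has bounded derivatives and is supported where $|x|\ge 2R_k$. I would bound its product with the profile in $H^{M}$ by the $H^M$ norm of the profile outside $B(0,R_k)$, which tends to zero uniformly in $t$ by compactness of the trajectory. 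Sobolev embedding then converts this to the required $B^{s}_{\infty,1}$ smallness.
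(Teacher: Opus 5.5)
Your proposal is correct and follows essentially the same route as the paper. You take a $k$-dependent bound $M$ that is uniform in the separation and in $R_k\ge1$, fix $\rho_*$ and $C_a$, spread the centers apart via \cref{lem:far-field-products}, and take $R_k$ large (using the $O(R_k^{-1})$ bound for $\Pcal\diver(G_{R_k}\otimes G_{R_k})$) so that the residuals fall below $\min(\rho_*,\sigma/C_a)$. The input bound then comes from \cref{lem:sparse-sum} and \eqref{eq:G-Besov}, as in the paper; the only cosmetic difference is that you control the plateau-tail terms by the $H^M$ norm of each profile outside a large ball plus Sobolev embedding, where the paper invokes the uniform spatial decay of all derivatives from \cref{lem:far-field-products}, and both work.
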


\begin{proof}
Fix a noninteger $s>2$. For fixed $k$, the family $J_k$ is finite. Write
\[
  U_j=\trans{y_{k,j}}u_{k,j},
  \qquad
  H_j=\trans{y_{k,j}}h_{k,j}.
\]
Every profile trajectory belongs to $C([0,T_K];H^m)$ for all $m$. By translation invariance, the triangle inequality, and \eqref{eq:G-derivatives}, there is a finite $M_k$ such that every pair
\[
  \left(\sum_{j\in J_k}U_j,\ \eta G_R+\sum_{j\in J_k}H_j\right),
  \qquad R\ge1,
\]
satisfies \eqref{eq:approx-bound} with $M=M_k$. Let $\rho_*$ and $C_a$ be the corresponding constants in \cref{prop:stability}, and choose $\mu>0$ so that
\begin{equation}
\label{eq:mu-choice}
  \mu<\rho_*\qquad\text{and}\qquad C_a\mu<\sigma.
\end{equation}

By \cref{lem:far-field-products}, the centers can be chosen recursively, while preserving \cref{lem:sparse-sum}, so far apart that
\begin{align}
\label{eq:cross-small}
 &\sum_{i\ne\ell}\left(
 \norm[L^1(0,T_K;B^s_{\infty,1})]{U_i\otimes U_\ell}
 +\norm[L^1(0,T_K;B^s_{\infty,1})]{H_i\otimes H_\ell}\right)\\
 &\quad+\sum_{i\ne\ell}
 \norm[L^1(0,T_K;B^{s+1}_{\infty,1})]{H_\ell\otimes U_i-U_i\otimes H_\ell}
 <c\mu.\nonumber
\end{align}
Fix those centers. Since $G_R=e$ on $B(0,2R)$, choose $R_k$ larger than every $|y_{k,j}|$. By the uniform spatial decay in \cref{lem:far-field-products}, after increasing $R_k$,
\begin{align}
\label{eq:plateau-tail-small}
 &\eta\sum_{j\in J_k}\norm[L^1(0,T_K;B^s_{\infty,1})]
   {(G_{R_k}-e)\otimes H_j+H_j\otimes(G_{R_k}-e)}\\
 &\quad+\eta\sum_{j\in J_k}\norm[L^1(0,T_K;B^{s+1}_{\infty,1})]
   {(G_{R_k}-e)\otimes U_j-U_j\otimes(G_{R_k}-e)}
 <c\mu.\nonumber
\end{align}
Finally, $\diver G_{R_k}=0$ and at least one derivative falls on the slowly varying factor, so
\begin{equation}
\label{eq:plateau-self}
  \norm[B^{s-1}_{\infty,1}]{\Pcal\diver(G_{R_k}\otimes G_{R_k})}
  \le C_sR_k^{-1}.
\end{equation}
Increase $R_k$ once more so that $\eta^2C_sR_k^{-1}<c\mu$.

The residuals of \eqref{eq:approx-sum} are
\begin{align}
\label{eq:Ru}
  R_{u,k}={}&\Pcal\diver\sum_{i\ne\ell}(U_i\otimes U_\ell-H_i\otimes H_\ell)\nonumber\\
  &-\eta\Pcal\diver\sum_{j\in J_k}\big((G_{R_k}-e)\otimes H_j
      +H_j\otimes(G_{R_k}-e)\big)\nonumber\\
  &-\eta^2\Pcal\diver(G_{R_k}\otimes G_{R_k}),\\
\label{eq:Rb}
  R_{b,k}={}&\diver\sum_{i\ne\ell}(H_\ell\otimes U_i-U_i\otimes H_\ell)\nonumber\\
  &+\eta\diver\sum_{j\in J_k}\big((G_{R_k}-e)\otimes U_j
      -U_j\otimes(G_{R_k}-e)\big).
\end{align}
The bounds \eqref{eq:cross-small}--\eqref{eq:plateau-self} and standard Besov multipliers give
\[
  \norm[L^1(0,T_K;B^{s-1}_{\infty,1})]{R_{u,k}}
  +\norm[L^1(0,T_K;B^s_{\infty,1})]{R_{b,k}}<\mu.
\]
Applying \cref{prop:stability} and \eqref{eq:mu-choice} yields \eqref{eq:gluing-Linf}. The input estimate follows from \cref{lem:sparse-sum,lem:plateau}:
\[
  \norm[\dot B^{-1}_{\infty,1}]{u_{0,k}}\le C\delta,
  \qquad
  \norm[\dot B^0_{\infty,1}]{b_{0,k}}\le C\eta.
\]
\end{proof}

Translate the target-shell functional together with its building block:
\begin{equation}
\label{eq:translated-functional}
  \Lambda^y_{K,\lambda}(f):=\Lambda_{K,\lambda}(\trans{-y}f).
\end{equation}

\begin{corollary}[Decoupling of the target-shell functionals]
\label{cor:functional-decoupling}
Under the hypotheses of \cref{prop:gluing}, the centers and $R_k$ may be chosen so that, for every $j\in J_k$,
\begin{align}
\label{eq:functional-cross}
  \abs{\Lambda^{y_{k,j}}_{K,2^j}(\eta G_{R_k})}
  +\sum_{\ell\ne j}\abs{\Lambda^{y_{k,j}}_{K,2^j}
     (\trans{y_{k,\ell}}h_{k,\ell}(T_K))}&<\sigma,\\
\label{eq:functional-correction}
  \abs{\Lambda^{y_{k,j}}_{K,2^j}\big(b_k(T_K)-b_k^a(T_K)\big)}&<C\sigma.
\end{align}
\end{corollary}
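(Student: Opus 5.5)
The plan is to reduce both estimates to two facts. First, each translated functional has a kernel that is Schwartz but concentrated near its own center. Second, the relevant pieces are either far from that center or already small in $L^\infty$.

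By \eqref{eq:translated-functional} and \eqref{eq:test-pairing}, the functional is
\[
\Lambda^{y}_{K,\lambda}(f)=\int f(x)\cdot\overline{\psi_{K,\lambda}(x-y)}\dd x .
\]
The kernel $\psi_{K,\lambda}$ is a fixed Schwartz function for fixed $k,j$; its $L^1$ norm is uniform by \cref{lem:test-functional}. Then \eqref{eq:functional-correction} is immediate from \eqref{eq:gluing-Linf}:
\[
\abs{\Lambda^{y}_{K,\lambda}(f)}\le \norm[L^1]{\psi_{K,\lambda}}\norm[L^\infty]{f}\le C\sigma,
\]
with $C=\sup\norm[L^1]{\psi}$ independent of $k$ and $j$. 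It only remains to check that the extra smallness requirements below are compatible with the choices already made in \cref{prop:gluing}.

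For the cross terms, the choice order of \cref{prop:gluing} is kept: centers first, then $R_k$. For $\ell\ne j$, change variables to get
\[
\Lambda^{y_{k,j}}_{K,2^j}(\trans{y_{k,\ell}}h_{k,\ell}(T_K))=\int h_{k,\ell}(T_K,x)\cdot\overline{\psi_{K,2^j}(x-(y_{k,j}-y_{k,\ell}))}\dd x .
\]
Here $h_{k,\ell}(T_K)\in H^m$ is bounded and vanishes at infinity (\cref{lem:far-field-products}), and $\psi\in L^1$. Splitting $\psi$ into a compact part and an $L^1$-small tail shows this pairing tends to $0$ as $|y_{k,j}-y_{k,\ell}|\to\infty$. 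Since $J_k$ is finite, the recursive choice of centers in \cref{prop:gluing} can additionally enforce that the sum over $\ell\ne j$ is below $\sigma/2$. Enlarging separations does not spoil \cref{lem:sparse-sum}, whose constant is independent of $L$.

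For the plateau term, I would use $\widehat\psi_{K,\lambda}(0)=0$, because the Fourier support is near $\lambda e$ with $\lambda\ge16$; hence the pairing with the constant $e$ vanishes. Then
\[
\Lambda^{y_{k,j}}(\eta G_{R_k})=\eta\int (G_{R_k}-e)(x)\cdot\overline{\psi(x-y_{k,j})}\dd x .
\]
Since $G_{R_k}-e=0$ on $B(0,2R_k)$ and $|y_{k,j}|<R_k$, the integrand lives where $|x-y_{k,j}|\ge R_k$. It is therefore bounded by $C\eta\int_{|z|\ge R_k}|\psi(z)|\dd z$, which is below $\sigma/2$ after enlarging $R_k$ once more. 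This is consistent with the last step of \cref{prop:gluing}, where $R_k$ is only increased. Alternatively, one could use \eqref{eq:G-high-block} with \eqref{eq:test-bounded}, which gives $C\eta(2^jR_k)^{-N}$.

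The only delicate point, and the step I expect to be the main obstacle, is bookkeeping. Enlarging separations and $R_k$ for these functional conditions must not break the constants $M_k,\rho_*,C_a$ fixed in \cref{prop:gluing}. These are uniform in $R\ge1$ and depend only on translation-invariant norms, so adding the new finitely many conditions to the recursive choice suffices. Then \eqref{eq:gluing-Linf} still gives \eqref{eq:functional-correction}.
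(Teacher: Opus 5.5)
Your proposal is correct and follows the paper's proof. You obtain \eqref{eq:functional-correction} from the uniform $L^1$ bound on the kernel together with \eqref{eq:gluing-Linf}, and the cross pairings from the $C_0$ decay in \cref{lem:far-field-products} plus finiteness of $J_k$, with the extra conditions absorbed into the ``sufficiently far / sufficiently large'' choices of \cref{prop:gluing}. The only variation is your main plateau argument, which uses $\widehat\psi_{K,\lambda}(0)=0$ and $G_{R_k}=e$ on $B(y_{k,j},R_k)\subset B(0,2R_k)$ to reduce to the $L^1$ tail of the kernel; the paper instead uses \eqref{eq:G-high-block} with \eqref{eq:Lambda-shell-bound}, which is the route you list as your alternative, and both are valid.
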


\begin{proof}
The kernel of $\Lambda^y_{K,\lambda}$ is a translated modulation of a fixed Schwartz function. Pairing it with a different translated profile tends to zero as the separation tends to infinity; this uses only the $C_0$ decay supplied by \cref{lem:far-field-products}. Since $J_k$ is finite, all cross pairings may be made small simultaneously. By \eqref{eq:G-high-block} and \eqref{eq:Lambda-shell-bound}, increasing $R_k$ also makes the plateau pairing small. These choices do not affect \eqref{eq:glued-input-bound}. Finally, \eqref{eq:gluing-Linf} and the uniform $L^1$ norm of the test kernel imply \eqref{eq:functional-correction}.
\end{proof}

\section{Proof of endpoint norm inflation}
\label{sec:proof-main}

\begin{proof}[Proof of \cref{thm:main}]
Choose a sufficiently small fixed constant $\kappa>0$. For $n\ge1$, set
\begin{equation}
\label{eq:parameter-choice}
  \delta_n=\kappa n^{-1},
  \qquad
  \eta_n=\kappa n^{-2},
  \qquad
  k_n=16\left\lceil\frac{n^6}{16}\right\rceil,
  \qquad
  t_n=T_{k_n}.
\end{equation}
Thus $k_n\in16\N$, $k_n\simeq n^6$, and $t_n=c_*2^{-k_n/2}\to0$. Apply \cref{prop:gluing} with
\begin{equation}
\label{eq:sigma-n}
  \sigma_n=\frac{\eta_n\delta_n^2}{k_n^2}.
\end{equation}
Let $(u_{0,n},b_{0,n})$ and $(u_n,b_n)$ be the resulting initial data and exact solution. By \eqref{eq:glued-input-bound},
\begin{equation}
\label{eq:input-to-zero}
  \norm[\dot B^{-1}_{\infty,1}]{u_{0,n}}
  +\norm[\dot B^0_{\infty,1}]{b_{0,n}}
  \le C(\delta_n+\eta_n)\longrightarrow0.
\end{equation}

For every $j\in J_{k_n}$, \cref{prop:single-shell,cor:functional-decoupling} give
\begin{align}
\label{eq:each-shell-lower}
  \norm[L^\infty]{\lp_jb_n(t_n)}
  &\ge C^{-1}\abs{\Lambda^{y_{k_n,j}}_{2^{k_n},2^j}(b_n(t_n))}\\
  &\ge c\eta_n\delta_n^2
  -C(\eta_n\delta_n^3+\eta_n^3)-C\sigma_n.\nonumber
\end{align}
Summing over $J_{k_n}$ and using $|J_{k_n}|\simeq k_n$,
\begin{equation}
\label{eq:sum-lower}
  \norm[\dot B^0_{\infty,1}]{b_n(t_n)}
  \ge c\eta_n\delta_n^2k_n
  -C\eta_n\delta_n^3k_n
  -C\eta_n^3k_n
  -Ck_n\sigma_n.
\end{equation}
The four terms on the right have sizes
\begin{equation}
\label{eq:term-sizes}
  c\kappa^3n^2,
  \qquad O(\kappa^4n),
  \qquad O(\kappa^3),
  \qquad O(\kappa^3n^{-10}),
\end{equation}
respectively. For fixed sufficiently small $\kappa$, the first term dominates as $n\to\infty$, and therefore
\begin{equation}
\label{eq:norm-to-infty}
  \norm[\dot B^0_{\infty,1}]{b_n(t_n)}\longrightarrow\infty.
\end{equation}
Given $0<\varepsilon<\varepsilon_0$, choose $n$ so large that the left side of \eqref{eq:input-to-zero} is below $\varepsilon$, $t_n<\varepsilon$, and the norm in \eqref{eq:norm-to-infty} exceeds $\varepsilon^{-1}$. This proves \eqref{eq:main-small-data}--\eqref{eq:main-inflation}.
\end{proof}

\begin{remark}
The proof uses only one directional functional in each selected shell. The total tested error is bounded by
\[
  Ck(\eta\delta^3+\eta^3)+o(\eta\delta^2k),
\]
whereas the leading tested contribution is bounded below by $c\eta\delta^2k$. No upper bound for the sum of the full $L^\infty$ norms of all remainder blocks is required.
\end{remark}

\section{Concluding remarks}

The endpoint mechanism has two essential components: a scale-independent response repeated on a growing family of dyadic shells, and a frequency-localized remainder estimate that preserves the shellwise lower bounds without controlling the full absolute remainder. The compact solenoidal plateau transfers the constant-field building-block calculation to data converging to the zero magnetic state. The argument suggests a possible endpoint strategy for other partially dissipative transport--stretching systems, but the scale balance, frozen-in identity, and compensated Piola structure must be re-established in each model.

A natural further question is whether one can construct a single datum in \eqref{eq:endpoint-pair} whose magnetic field fails to belong to $\dot B^0_{\infty,1}$ at every prescribed positive time. The finite-shell estimate proved here does not provide the persistence or synchronization mechanism required for that stronger pointwise-in-time statement.

\appendix

\section{High-regularity a posteriori stability}
\label{app:stability}

We prove \cref{prop:stability}. Write the projected equations for the approximate pair as
\begin{equation}
\label{eq:approx-system-app}
\left\{
\begin{aligned}
  &\partial_tu^a-\nu\Delta u^a
   +\Pcal\diver(u^a\otimes u^a-b^a\otimes b^a)=R_u,\\
  &\partial_tb^a+\diver(b^a\otimes u^a-u^a\otimes b^a)=R_b,\\
  &\diver u^a=\diver b^a=0.
\end{aligned}
\right.
\end{equation}
Let $w=u-u^a$ and $z=b-b^a$. Then $(w,z)|_{t=0}=0$ and
\begin{equation}
\label{eq:error-system}
\left\{
\begin{aligned}
  &\partial_tw-\nu\Delta w
  +\Pcal\diver(u^a\otimes w+w\otimes u^a+w\otimes w)\\
  &\qquad=\Pcal\diver(b^a\otimes z+z\otimes b^a+z\otimes z)-R_u,\\
  &\partial_tz+(u^a+w)\cdot\nabla z
   =(b^a+z)\cdot\nabla w+z\cdot\nabla u^a-w\cdot\nabla b^a-R_b,\\
  &\diver w=\diver z=0.
\end{aligned}
\right.
\end{equation}
For an interval $I=[t_0,t_1]\subset[0,T]$, put
\begin{align*}
  E_u(I)&:=\norm[\widetilde L^\infty(I;B^{s-1}_{\infty,1})]{w}
  +\nu\norm[L^1(I;B^{s+1}_{\infty,1})]{w},\\
  E_b(I)&:=\norm[\widetilde L^\infty(I;B^s_{\infty,1})]{z},\\
  A_u(I)&:=\norm[L^1(I;B^{s+1}_{\infty,1})]{u^a},\\
  A_b(I)&:=\norm[\widetilde L^\infty(I;B^{s+1}_{\infty,1})]{b^a},\\
  \mathcal R_u(I)&:=\norm[L^1(I;B^{s-1}_{\infty,1})]{R_u},\\
  \mathcal R_b(I)&:=\norm[L^1(I;B^{s}_{\infty,1})]{R_b}.
\end{align*}
The transport--diffusion estimate, Bony's decomposition, and the algebra property of $B^s_{\infty,1}$ give
\begin{align}
\label{eq:Eu-app}
 E_u(I)\le Ce^{CA_u(I)}\big[&\norm[B^{s-1}_{\infty,1}]{w(t_0)}+\mathcal R_u(I)
 +CA_u(I)E_u(I)+CE_u(I)^2\\
 &+C|I|\big(A_b(I)+E_b(I)\big)E_b(I)\big].\nonumber
\end{align}
Indeed, $w\cdot\nabla u^a$ and $w\cdot\nabla w$ are bounded in $L^1(I;B^{s-1}_{\infty,1})$ by $A_u(I)E_u(I)$ and $E_u(I)^2$, while
\[
  \norm[B^{s-1}_{\infty,1}]{b^a\cdot\nabla z+z\cdot\nabla b^a+z\cdot\nabla z}
  \le C\big(A_b(I)+E_b(I)\big)E_b(I).
\]
Let
\[
  V_I=\int_I\norm[B^{s+1}_{\infty,1}]{u^a+w}\dd t
  \le A_u(I)+\nu^{-1}E_u(I).
\]
The transport estimate for the second equation in \eqref{eq:error-system} gives
\begin{align}
\label{eq:Eb-app}
 E_b(I)\le e^{CV_I}\big[&\norm[B^s_{\infty,1}]{z(t_0)}+\mathcal R_b(I)
 +C\nu^{-1}\big(A_b(I)+E_b(I)\big)E_u(I)\\
 &+CA_u(I)E_b(I)+C\nu^{-1}A_b(I)E_u(I)\big].\nonumber
\end{align}
The final term uses
\[
  \norm[B^s_{\infty,1}]{w\cdot\nabla b^a}
  \le C\norm[B^{s+1}_{\infty,1}]{w}\,\norm[B^{s+1}_{\infty,1}]{b^a}.
\]

We now make the coupled bootstrap explicit. Choose a universal $\theta>0$ sufficiently small. By \eqref{eq:approx-bound}, partition $[0,T]$ into finitely many intervals $I_m$ such that on every interval
\begin{equation}
\label{eq:partition-smallness}
  A_u(I_m)\le\theta,
  \qquad
  |I_m|A_b(I_m)\le\theta,
  \qquad
  \frac{|I_m|A_b(I_m)^2}{\nu}\le\theta.
\end{equation}
The number of intervals depends only on $M,T,\nu$. Assume as a bootstrap that $E_u(I_m)+E_b(I_m)\le1$. Then the exponential factors in \eqref{eq:Eu-app}--\eqref{eq:Eb-app} are uniformly bounded. From \eqref{eq:Eb-app}, after absorbing $CA_u(I_m)E_b(I_m)$,
\begin{equation}
\label{eq:Eb-solved}
  E_b(I_m)
  \le C\big(e_m+\mathcal R_b(I_m)\big)
  +C\nu^{-1}A_b(I_m)E_u(I_m)
  +C\nu^{-1}E_u(I_m)E_b(I_m),
\end{equation}
where
\[
  e_m:=\norm[B^{s-1}_{\infty,1}]{w(t_m)}
       +\norm[B^s_{\infty,1}]{z(t_m)}.
\]
Insert \eqref{eq:Eb-solved} into the linear magnetic term in \eqref{eq:Eu-app}. The coefficient of $E_u(I_m)$ created in this way is bounded by
\[
  C\frac{|I_m|A_b(I_m)^2}{\nu}\le C\theta.
\]
All other linear terms are bounded by $C\theta(E_u+E_b)$. The remaining nonlinear terms are quadratic in $E_u+E_b$. Consequently,
\begin{equation}
\label{eq:combined-bootstrap}
\begin{aligned}
  E_u(I_m)+E_b(I_m)
  &\le C\big(e_m+\mathcal R_u(I_m)+\mathcal R_b(I_m)\big)\\
  &\quad+C\theta\big(E_u(I_m)+E_b(I_m)\big)
  +C\big(E_u(I_m)+E_b(I_m)\big)^2.
\end{aligned}
\end{equation}
Choose $\theta$ so that the linear term is absorbed. If the initial error and the total residual are below a sufficiently small threshold, a continuity argument improves the bootstrap and yields
\begin{equation}
\label{eq:interval-bound}
  E_u(I_m)+E_b(I_m)
  \le C\big(e_m+\mathcal R_u(I_m)+\mathcal R_b(I_m)\big).
\end{equation}
The endpoint norms at $t_{m+1}$ are controlled by the left-hand side. Iterating \eqref{eq:interval-bound} over the finite partition gives \eqref{eq:stability-bound}, with constants depending only on $M,T,\nu$. Choosing the total residual below the bootstrap threshold also prevents breakdown of the smooth local solution before $T$. Finally, $B^{s-1}_{\infty,1}\hookrightarrow L^\infty$ and $B^s_{\infty,1}\hookrightarrow L^\infty$ prove the last assertion of \cref{prop:stability}.

\section*{Declaration of competing interest}
The authors declare that they have no known competing financial interests or personal relationships that could have appeared to influence the work reported in this paper.

\section*{Funding}
This research did not receive any specific grant from funding agencies in the public, commercial, or not-for-profit sectors.

\section*{Data availability}
No data were used for the research described in this article.

\section*{Declaration of generative AI and AI-assisted technologies in the manuscript preparation process}
During the preparation of this work, the authors used OpenAI ChatGPT to assist with language editing, LaTeX organization, and consistency checks. After using this tool, the authors reviewed and edited the content as needed, independently verified the mathematical arguments, and take full responsibility for the content of the article.

\end{document}